\numberwithin{equation}{section}
\newtheorem{df}{Definition}[section]
\newtheorem{lem}[df]{Lemma}
\newtheorem{prop}[df]{Proposition}
\theoremstyle{definition}\newtheorem{rmk}[df]{Remark}}
\newtheorem{thm}[df]{Theorem}
\def\B{\mathbb{B}}
\def\R{\mathbb{R}}
\def\D{\mathscr{D}}
\def\O{\mathscr{O}}
\def\S{\mathscr{S}}
\def\div{{\rm{div}}\,}
\def\rot{{\rm{rot}}\,}
\begin{document}
\title{\vspace{-0.7cm}
\bf Existence of a Stationary Navier-Stokes Flow Past 
a Rigid Body, with Application to Starting Problem in Higher Dimensions}
\author{Tomoki Takahashi}
\date{}
\maketitle
\noindent{\bf Abstract.}~
We consider the large time behavior of the Navier-Stokes flow 
past a rigid body in $\R^n$ with $n\geq 3$. We first construct a small 
stationary solution possessing the optimal summability at spatial infinity, 
which is the same as that of the Oseen fundamental solution. 
When the translational velocity of the body gradually increases and 
is maintained after a certain finite time, 
we then show that 
the nonstationary fluid motion converges to 
the stationary solution corresponding to 
a small terminal velocity of the body 
as time $t\rightarrow\infty$ in $L^q$ with $q\in[n,\infty]$. 
This is called Finn's starting problem and the 
three-dimensional case was affirmatively 
solved by Galdi, Heywood and Shibata \cite{gahesh1997}. 
The present paper extends \cite{gahesh1997} 
to the case of higher dimensions. Even for the three-dimensional 
case, our theorem provides new convergence rate, that is determined by 
the summability of the stationary solution at infinity 
and seems to be sharp. 
\vspace{0.4cm}\\
\noindent {\bf Mathematics Subject Classification.}
~35Q30, 76D05 
\vspace{0.4cm}\\\noindent
{\bf Keywords.}~Navier-Stokes flow, Oseen flow, 
steady flow, starting problem, attainability
\section{Introduction and main results}\label{section1}
~~~We consider a viscous incompressible flow past a rigid body 
$\O\subset \R^n\,(n\geq 3)$. 
We suppose that 
$\O$ is translating 
with a velocity $-\psi(t)ae_1$, where $a>0$, 
$e_1=(1,0,\cdots,0)^\top$ and  
$\psi$ is a function on $\R$ describing the transition of the translational
velocity in such a way that 
\begin{align}\label{psidef}
\psi\in C^1(\R;\R),\quad
|\psi(t)|\leq 1\quad{\rm{for}}~\,t\in\R,\quad
\psi(t)=0\quad{\rm{for}}\,~t\leq 0,\quad 
\psi(t)=1\quad {\rm{for}} \,~t\geq 1. 
\end{align}
Here and hereafter, $(\cdot)^\top$ denotes the transpose.  
We take the frame attached to the body, 
then the fluid motion which occupies 
the exterior domain $D=\R^n\setminus\O$ 
with $C^2$ boundary $\partial D$ and is started from rest obeys 
\begin{align}\label{NS2}
\left\{
\begin{array}{r@{}c@{}ll}
\displaystyle\frac{\partial u}{\partial t}+u\cdot \nabla u&{}={}&
\Delta u-\psi(t)a\,\displaystyle\frac{\partial u}{\partial x_1}
-\nabla p,&\quad x\in D,~t>0,\\
\nabla\cdot u&{}={}&0,&\quad x\in D,t\geq 0,\\
u|_{\partial D}&{}={}&-\psi(t)ae_1,&\quad t>0,\\
u&{}\rightarrow{}&0&\quad {\rm{as}}~|x|\rightarrow \infty,\\
u(x,0)&{}={}&0,&\quad  x\in D.
\end{array}\right.
\end{align}
Here, $u=(u_1(x,t),\cdots,u_n(x,t))^\top$ and $p=p(x,t)$ denote 
unknown velocity and pressure of the fluid, respectively. 
Since $\psi(t)=1$ for $t\geq 1$, 
the large time behavior of solutions is  
related to the stationary problem 
\begin{align}\label{sta}
\left\{
\begin{array}{r@{}c@{}ll}
u_s\cdot \nabla u_s&{}={}&\Delta u_s-a\,
\displaystyle\frac{\partial u_s}{\partial x_1}-\nabla p_s,
&\quad x\in D,\\
\nabla\cdot u_s&{}={}&0,&\quad x\in D,\\
u_s|_{\partial D}&{}={}&-ae_1,\\
u_s&{}\rightarrow{}& 0&\quad {\rm{as}}~|x|\rightarrow \infty.
\end{array}\right.
\end{align}
When $n=3$, the pioneering work due to Leray \cite{leray1933} 
provided the existence theorem for 
weak solution to problem (\ref{sta}), what is called $D$-solution,  
having finite Dirichlet integral 
without smallness assumption on data. 
From the physical point of view, 
solutions of (\ref{sta}) should reflect the anisotropic decay structure 
caused by the translation, 
but his solution had little information about the 
behavior at large distances. To fill this gap, 
Finn \cite{finn1959}--\cite{finn1973} introduced the class of 
solutions with pointwise decay property 
\begin{align}\label{prclass}
u_s(x)=O(|x|^{-\frac{1}{2}-\varepsilon})
\quad\quad{\rm as~}|x|\rightarrow\infty
\end{align}
for some $\varepsilon>0$ 
and proved that if $a$ is small enough, (\ref{sta}) admits 
a unique solution satisfying (\ref{prclass}) and 
exhibiting paraboloidal wake region behind the body $\O$. 
He called the Navier-Stokes flows satisfying (\ref{prclass}) 
physically reasonable solutions. 
It is remarkable that 
$D$-solutions become physically reasonable solutions 
no matter how large $a$ would be, see Babenko \cite{babenko1973}, 
Galdi \cite{galdi1992} and Farwig and Sohr \cite{faso1998}. 
Galdi developed the $L^q$-theory of the linearized system, that 
we call the Oseen system, to prove that every $D$-solution 
has the same summability as the one of the Oseen fundamental solution 
without any smallness assumption, see \cite[Theorem X.6.4]{galdi2011}. 
It is not straightforward to generalize 
his result to the case of higher dimensions and 
it remains open whether the same result holds true for $n\geq 4$. 
We also refer to Farwig \cite{farwig1992t} who gave another outlook on 
Finn's results by using anisotropically weighted Sobolev spaces, 
and to Shibata \cite{shibata1999} who developed the estimates of 
physically reasonable solutions and then proved their stability 
in the $L^3$ framework when $a$ is small. 
There is less literature concerning the problem (\ref{sta}) 
for the case $n\geq 4$. 
When $n\geq 3$, 
Shibata and Yamazaki \cite{shya2005} 
constructed a solution $u_s$, which is 
uniformly bounded with respect to small $a\geq 0$ in the Lorentz space 
$L^{n,\infty}$, and 
investigated the behavior of $u_s$ as $a\rightarrow 0$. 
If, in particular, $n\geq 4$ and if $a\geq 0$ is sufficiently small, 
they also derived 
\begin{align}\label{staclass}
u_s\in L^{\frac{n}{1+\rho_1}}(D)\cap L^{\frac{n}{1-\rho_2}}(D),\quad 
\nabla u_s\in L^{\frac{n}{2+\rho_1}}(D)\cap L^{\frac{n}{2-\rho_2}}(D)
\end{align}
for some $0<\rho_1,\rho_2<1$, 
see \cite[Remark 4.2]{shya2005}. 
\par  Let us turn to the initial value problem. 
Finn \cite{finn1965s} conjectured that 
(\ref{NS2}) admits a solution which tends to 
a physically reasonable solution as $t\rightarrow\infty$ 
if $a$ is small enough. 
This is called Finn's starting problem. 
It was first studied 
by Heywood \cite{heywood1972}, in which a stationary solution is said to be 
attainable 
if the fluid motion converges to this solution as $t\rightarrow\infty.$ 
Later on, by using Kato's approach \cite{kato1984} 
(see also Fujita and Kato \cite{fuka1964}) together with the
$L^q$-$L^r$ estimates for the Oseen initial value problem
established by Kobayashi and Shibata \cite{kosh1998}, 
Finn's starting problem was affirmatively solved 
by Galdi, Heywood and Shibata \cite{gahesh1997}. After that, 
Hishida and Maremonti \cite{hima2018} 
constructed a sort of weak solution $u$ that enjoys  
\begin{align}\label{inftyconvergence}
\|u(t)-u_s\|_{\infty}=O(t^{-\frac{1}{2}})
 \quad {\rm as}~ t\rightarrow \infty
\end{align} 
if $a$ is small, but $u(\cdot,0)\in L^{3}(D)$ can be large. 
Here and hereafter, 
$\|\cdot\|_q$ denotes the norm of $L^q(D)$. 
Although we concentrate ourselves on attainability in this paper, 
stability of stationary solutions was also studied by 
Shibata \cite{shibata1999}, Enomoto and Shibata \cite{ensh2005} and 
Koba \cite{koba2017} in the $L^q$ framework. 
Those work except \cite{ensh2005} studied the 
three-dimensional exterior problem, while \cite{ensh2005} 
showed the stability of a stationary solution satisfying (\ref{staclass}) 
for some $0<\rho_1,\rho_2<1$ 
in $n$-dimensional exterior domains with $n\geq 3$. 
Stability of physically 
reasonable solutions in 2D is much more involved for several reasons and it 
has been recently proved by Maekawa \cite{maekawa2019stability}. 
\par The aim of this paper is two-fold. The first one is to construct a 
small stationary solution 
possessing the optimal summability at spatial infinity, 
which is the same as that of the Oseen fundamental solution $\bf E$:
\begin{align}\label{fundamentalint}
{\bf E}\in L^q(\{x\in\R^n\mid |x|>1\}),
\quad q>\frac{n+1}{n-1},\quad\quad
\nabla {\bf E}\in L^r(\{x\in\R^n\mid |x|>1\}),\quad
r>\frac{n+1}{n},
\end{align}
see Galdi \cite[Section VII]{galdi2011}. As already mentioned above, 
this result is well known in 
three-dimensional case even for large $a>0$, but it is not found 
in the literature for higher dimensional case $n\geq 4$. 
Our theorem covers the three-dimensional case as well 
and the proof is considerably shorter than the one given by 
authors mentioned above since we focus our interest 
only on summability at infinity 
rather than anisotropic pointwise estimates. 
The second aim is to give an affirmative answer 
to the starting problem as long 
as $a$ is small enough, that is, 
to show the attainability of the stationary 
solution obtained above. The result extends 
Galdi, Heywood and Shibata \cite{gahesh1997} to the case 
of higher dimensions. Even for the three-dimensional case, our theorem 
not only recovers \cite{gahesh1997} 
but also provides better decay properties, for instance, 
\begin{align}\label{inftyconvergence2}
\|u(t)-u_s\|_{\infty}=O(t^{-\frac{1}{2}-\frac{\rho}{2}})
 \quad {\rm as}~ t\rightarrow \infty
\end{align} 
for some $\rho>0$, that should be compared with (\ref{inftyconvergence}). 
This is because the fluid is initially at rest and because the 
three-dimensional stationary solution $u_s$ belongs to $L^q(D)$ 
with $q<3$; to be precise, since $q$ can be close to $2$, one can take 
$\rho$ close to $1/2$ in (\ref{inftyconvergence2}).
Due to the $L^q$-$L^r$ estimates of the Oseen semigroup 
established by Kobayashi and Shibata \cite{kosh1998}, 
Enomoto and Shibata \cite{ensh2004,ensh2005}, 
see Proposition \ref{proplqlr},
this decay rate is sharp in view 
of presence of $u_s$, see (\ref{h1}) below, in forcing terms of 
the equation (\ref{NS3}) for the perturbation. 
Our result can be also compared with \cite{tomopre} 
by the present author on the 
starting problem in 
which translation is replaced by rotation of the body $\O\subset \R^3$. 
Under the circumstance of \cite{tomopre}, the optimal spatial decay of 
stationary solutions observed in general is the scale-critical rate 
$O(|x|^{-1})$, so that they cannot belong to $L^q(D)$ with $q\leq 3=n$, 
and therefore, we have no chance to deduce (\ref{inftyconvergence2}). 
Another remark is that, in comparison with stability theorem due to 
\cite{ensh2005} for $n\geq 3$, more properties 
of stationary solutions are needed to establish the attainability theorem. 
Therefore, those properties must be deduced in constructing a solution of 
(\ref{sta}). 
\par Let us state the first main theorem on 
the existence and summability of stationary solutions.
\begin{thm}\label{rmkrho}
Let $n\geq 3$. For every $(\alpha_1,\alpha_2,\beta_1,\beta_2)$ satisfying 
\begin{align}\label{parameter1}
\frac{n+1}{n-1}<\alpha_1\leq n+1\leq \alpha_2<\frac{n(n+1)}{2},\quad 
\frac{n+1}{n}<\beta_1\leq \frac{n+1}{2}\leq \beta_2<\frac{n(n+1)}{n+2},
\end{align}
there exists a constant  
$\delta=\delta(\alpha_1,\alpha_2,\beta_1,\beta_2,n,D)\in(0,1)$ such 
that if
\begin{align*}
0<a^{\frac{n-2}{n+1}}<\delta,
\end{align*}
problem $(\ref{sta})$ admits a unique solution $u_s$ along with 
\begin{align}\label{usest}
\|u_s\|_{\alpha_1}+\|u_s\|_{\alpha_2}\leq Ca^{\frac{n-1}{n+1}},\quad 
\|\nabla u_s\|_{\beta_1}+\|\nabla u_s\|_{\beta_2}
\leq Ca^{\frac{n}{n+1}},
\end{align}
where $C>0$ is independent of $a$. 
\end{thm}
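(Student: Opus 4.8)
The plan is to recast $(\ref{sta})$ as a fixed-point problem for the Oseen system with homogeneous boundary data, and to extract every power of $a$ from the scaling of the Oseen fundamental solution. First I would remove the boundary datum: since $-ae_1$ is a constant field with vanishing flux through $\partial D$, a standard cut-off/Bogovskii construction produces a solenoidal extension $b=b_a$, supported in a fixed bounded neighbourhood of $\O$, with $b|_{\partial D}=-ae_1$ and $\|b\|_{W^{2,q}}\leq C(q)\,a$ for all $q\in(1,\infty)$. Writing $u_s=b+w$, $p_s=\pi$, the remainder $w$ must solve the Oseen system with homogeneous boundary condition and right-hand side
\begin{align}\label{rhs}
f=\Delta b-a\partial_1 b-(b+w)\cdot\nabla(b+w),
\end{align}
where the linear $b$-terms are compactly supported of size $O(a)$ and the last term is quadratic in $(b,w)$.

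Second --- and this is the heart of the matter --- I would establish $L^q$-estimates for the exterior Oseen problem $-\Delta w+a\partial_1 w+\nabla\pi=f$, $\nabla\cdot w=0$, $w|_{\partial D}=0$, $w\to 0$, with the optimal summability of $(\ref{fundamentalint})$ and with explicit dependence on $a$. Rather than pointwise anisotropic bounds, I would argue at the level of summability only, using the whole-space fundamental solution ${\bf E}={\bf E}_a$ together with its scaling ${\bf E}_a(x)=a^{n-2}{\bf E}_1(ax)$ and $\nabla{\bf E}_a(x)=a^{n-1}(\nabla{\bf E}_1)(ax)$. Since ${\bf E}_1\in L^{\frac{n+1}{n-1},\infty}$ and $\nabla{\bf E}_1\in L^{\frac{n+1}{n},\infty}$ near infinity, the weak Young inequality yields the needed $L^q$--$L^r$ bounds, while the change of variables $y=ax$ turns the scaling prefactors into the powers
\begin{align}\label{kernelpower}
\|{\bf E}_a\|_{L^{\frac{n+1}{n-1},\infty}}\sim a^{-\frac{2}{n+1}},\qquad
\|\nabla{\bf E}_a\|_{L^{\frac{n+1}{n},\infty}}\sim a^{-\frac{1}{n+1}}.
\end{align}
Applied to the $O(a)$ compactly supported part of $(\ref{rhs})$ these give exactly $\|w\|_q\lesssim a^{\frac{n-1}{n+1}}$ and $\|\nabla w\|_r\lesssim a^{\frac{n}{n+1}}$ at the endpoint exponents, hence for the super-critical ranges in $(\ref{parameter1})$ as well since $a<1$ (a larger exponent only raises the power of $a$). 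The exterior geometry is restored by a cut-off procedure following Galdi, splitting $w$ into a near-field piece governed by the Stokes system in a bounded domain (where all $L^q$ estimates are available and the data is compactly supported) and a far-field piece governed by the whole-space Oseen operator; the localization commutators are lower-order and compactly supported.

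Third, I would set up a contraction in
\begin{align}\label{Xspace}
X=\{w:\ \nabla\cdot w=0,\ w|_{\partial D}=0,\ \|w\|_X:=\|w\|_{\alpha_1}+\|w\|_{\alpha_2}+\|\nabla w\|_{\beta_1}+\|\nabla w\|_{\beta_2}<\infty\},
\end{align}
defining $\Phi(w)$ to be the Oseen solution with right-hand side $(\ref{rhs})$. The admissible exponents in $(\ref{parameter1})$ are precisely those linked by $\frac{1}{\beta}=\frac{1}{\alpha}+\frac{1}{n+1}$ (the gain of one derivative in the anisotropic Oseen scale), which is what makes the quadratic term $(b+w)\cdot\nabla(b+w)$, estimated by H\"older's inequality, land in a space the solution operator maps back into $X$. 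Tracking powers via $(\ref{kernelpower})$, the linear $b$-terms contribute $O(a^{\frac{n-1}{n+1}})$ whereas the quadratic terms contribute $O(a^{\frac{2n-3}{n+1}})$, i.e.\ a factor $a^{\frac{n-2}{n+1}}$ smaller; hence $\Phi$ maps the ball $\{\|w\|_X\leq K a^{\frac{n-1}{n+1}}\}$ into itself and is a contraction once $a^{\frac{n-2}{n+1}}<\delta$. Its unique fixed point yields $u_s=b+w$ satisfying $(\ref{usest})$, and uniqueness among small solutions follows from the same estimate.

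The main obstacle is the second step: proving the Oseen estimates with the sharp summability range $q>\frac{n+1}{n-1}$ uniformly in $n\geq 3$. In three dimensions this is classical, but for $n\geq 4$ the available theory $(\ref{staclass})$ due to \cite{shya2005} only gives the narrower range, so the decisive point is to run the endpoint analysis of ${\bf E}_1$ through Lorentz spaces and the weak Young inequality --- avoiding anisotropic pointwise kernel bounds --- and to verify that the cut-off commutators and the quadratic nonlinearity respect the admissible range $(\ref{parameter1})$, so that the whole scheme closes with the stated $a$-powers.
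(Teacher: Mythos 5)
Your overall architecture---a linear Oseen $L^q$ theory with explicit dependence on $a$, followed by a contraction argument in which the boundary data contributes $O(a^{\frac{n-1}{n+1}})$ and the quadratic term an extra factor $a^{\frac{n-2}{n+1}}$---is the same as the paper's, and your power counting is correct. Your two deviations are acceptable in principle: lifting $-ae_1$ by a compactly supported solenoidal (Bogovski\u{\i}) extension is a standard variant of the paper's choice to feed $u_*=-ae_1$ directly into the inhomogeneous linear theory (Proposition \ref{proplinear}, whose trace norm is $O(a)$), and re-deriving that linear theory by Lorentz-space scaling of ${\bf E}_a$ is a laborious but plausible alternative to citing \cite{galdi2011}.

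The genuine gap is in your third step, where you assert that H\"older's inequality plus the solution operator closes the scheme in $X$ for every quadruple satisfying (\ref{parameter1}). This fails exactly at the endpoint exponents the theorem is designed to reach, and this failure is the main difficulty the paper's proof is built to overcome. If $\alpha_1$ is close to $(n+1)/(n-1)$ and $\beta_1$ close to $(n+1)/n$, then $1/\alpha_1+1/\beta_1$ is close to $(2n-1)/(n+1)>1$, so $w\cdot\nabla w$ estimated with this pairing lies in no Lebesgue space $L^\kappa$ with $\kappa>1$ and the linear theory cannot be applied at all; symmetrically, if $\alpha_2,\beta_2$ are close to $n(n+1)/2$ and $n(n+1)/(n+2)$, then $1/\alpha_2+1/\beta_2<2/n$, and the $a$-uniform estimate of Proposition \ref{proplinear} (available only for $q<n/2$) is lost, which destroys the smallness bookkeeping. (Your side claim that the admissible exponents are ``precisely those linked by $1/\beta=1/\alpha+1/(n+1)$'' also misreads (\ref{parameter1}), where $\alpha_i$ and $\beta_i$ vary independently; and even for linked pairs the first failure above occurs whenever $\alpha_1\leq 2(n+1)/n$.) The paper resolves this by introducing auxiliary exponents $(q_1,q_2,r_1,r_2)$ subject to (\ref{parameter2})--(\ref{parameter5}): the nonlinearity is estimated in $L^{\kappa_i}$ with $1/\kappa_i=1/q_i+1/r_i\in(2/n,1)$, using the interpolation $L^{\alpha_1}\cap L^{\alpha_2}\subset L^{q_i}$ and $L^{\beta_1}\cap L^{\beta_2}\subset L^{r_i}$, and the output exponents $\mu_i,\lambda_i$ of the linear theory are checked to bracket $\alpha_1,\alpha_2$ and $\beta_1,\beta_2$. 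Moreover, because the right-hand side is then placed in two different spaces $L^{\kappa_1}$ and $L^{\kappa_2}$ and Proposition \ref{proplinear} is invoked separately for each (its uniqueness holding only within the class $X_{\kappa_i}(n)$), one must prove that the two resulting solutions coincide before the intersected estimates make sense; this is Lemma \ref{identify}, established via a cut-off/fundamental-solution representation and an energy argument. Both ingredients---the intermediate-exponent selection and the identification lemma---are absent from your proposal, and without them the contraction in $X$ does not close.
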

\par The upper bounds of $\alpha_2$ and $\beta_2$ come from (\ref{s})
with $q<n/2$ in Proposition \ref{proplinear} on 
the $L^q$-theory of the Oseen system, 
whereas the lower 
bounds of $\alpha_1$ and $\beta_1$ are just (\ref{fundamentalint}). 
\par For the proof of Theorem \ref{rmkrho}, 
we define a certain closed ball 
$N$ and a contraction map 
$\Psi:N\ni v\mapsto u\in N$ 
which provides the solution to the problem 
\begin{align}\label{vnablav}
\left\{
\begin{array}{r@{}c@{}ll}
\Delta u-a\,\displaystyle\frac{\partial u}{\partial x_1}
&{}={}&\nabla p+v\cdot\nabla v,
&\quad x\in D,\\
\nabla\cdot u&{}={}&0,&\quad x\in D,\\
u|_{\partial D}&{}={}&-ae_1,\\
u&{}\rightarrow{}& 0&\quad {\rm{as}}~|x|\rightarrow \infty.
\end{array}\right.
\end{align}
In doing so, we rely on the $L^q$-theory of the Oseen system developed by 
Galdi \cite[Theorem VII.7.1]{galdi2011}, see Proposition \ref{proplinear}, 
which gives us sharp summability estimates of solutions 
at infinity together with explicit dependence on $a>0$. 
As long as we only use Proposition \ref{proplinear}, 
the only space in which estimates of $\Psi$ are closed is 
\begin{align*}
\{u\in L^{n+1}(D)\mid\nabla u\in L^{\frac{n+1}{2}}(D)\}.
\end{align*}
From this, we can capture neither the optimal summability 
at infinity nor regularity 
required in the study of the starting problem. 
We thus use at least two spaces 
$L^{\alpha_i}(D)~(i=1,2)$ for $u$ 
and $L^{\beta_i}(D)~(i=1,2)$ for $\nabla u$, 
and intend to find a solution within a closed ball $N$ of 
\begin{align}\label{staclass3}
\{u\in L^{\alpha_1}(D)\cap L^{\alpha_2}(D)\mid  
\nabla u\in L^{\beta_1}(D)\cap L^{\beta_2}(D)\}.
\end{align} 
However, it is not possible to apply Proposition \ref{proplinear} to 
$f=v\cdot\nabla v$ with 
\begin{align}\label{alpha1beta1}
v\in L^{\alpha_1}(D),\quad\quad \nabla v\in L^{\beta_1}(D)
\end{align}
or
\begin{align}\label{alpha2beta2}
v\in L^{\alpha_2}(D),\quad\quad \nabla v\in L^{\beta_2}(D)
\end{align}
if $\alpha_1$ and $\beta_1$ are simultaneously close to 
$(n+1)/(n-1)$ and $(n+1)/n$, or 
if $\alpha_2$ and $\beta_2$ are simultaneously close to 
$n(n+1)/2$ and $n(n+1)/(n+2)$, because 
the relation 
\begin{align*}
\frac{2}{n}<\frac{1}{\alpha_2}+\frac{1}{\beta_2}<
\frac{1}{\alpha_1}+\frac{1}{\beta_1}<1
\end{align*}
required in the linear theory, see Proposition \ref{proplinear}, 
is not satisfied. 
In order to overcome this difficulty, given 
$(\alpha_1,\alpha_2,\beta_1,\beta_2)$ satisfying (\ref{parameter1}), 
we choose auxiliary exponents $(q_1,q_2,r_1,r_2)$ fulfilling 
\begin{align*}
\alpha_1\leq q_1\leq q_2\leq \alpha_2,
\quad\beta_1\leq r_1\leq r_2\leq\beta_2,
\quad \frac{2}{n}<\frac{1}{q_i}+\frac{1}{r_i}<1,\quad i=1,2
\end{align*}
such that the application of Proposition \ref{proplinear} 
to $f=v\cdot\nabla v$ with 
$v\in L^{q_1}(D)$ and $\nabla v\in L^{r_1}(D)$ 
(resp. $v\in L^{q_2}(D)$ and $\nabla v\in L^{r_2}(D)$) 
recovers (\ref{alpha1beta1}) (resp. (\ref{alpha2beta2})) with $u$. 
\par Another possibility to prove 
Theorem \ref{rmkrho} is combining Proposition \ref{proplinear} with 
the Sobolev inequality. 
We then get a solution $(u_s,p_s)\in X_q(n)$ for all $q\in(1,\infty)$ 
with $n/3\leq q\leq (n+1)/3$, where $X_q(n)$ is defined in 
Proposition \ref{proplinear}. 
The restriction $n/3\leq q\leq (n+1)/3$ is removed by  
applying a bootstrap argument
to decrease the lower bound to $1$ 
and to increase the upper bound to $n/2$. 
As compared with this way, in our proof, 
we do not any use a bootstrap argument and directly construct a
solution possessing the optimal summability at infinity as well as 
regularity required in the study of the starting problem. 
\par Let us proceed to the starting problem. 
To study the attainability of the stationary solution 
$u_s$ of class (\ref{staclass3}) with $(\alpha_1,\alpha_2,\beta_1,\beta_2)$
satisfying (\ref{parameter1}), 
it is convenient to set 
\begin{align}\label{staclass2}
\alpha_1=\frac{n}{1+\rho_1},\quad\quad
\alpha_2=\frac{n}{1-\rho_2},\quad\quad 
\beta_1=\frac{n}{2+\rho_3},\quad\quad\beta_2=\frac{n}{2-\rho_4}
\end{align}
with $(\rho_1,\rho_2,\rho_3,\rho_4)$ satisfying
\begin{align}\label{rho1}
0<\rho_1<\frac{n^2-2n-1}{n+1},\quad
\frac{1}{n+1}\leq \rho_2<\frac{n-1}{n+1},\quad 
0<\rho_3<\frac{n^2-2n-2}{n+1},\quad
\frac{2}{n+1}\leq \rho_4<\frac{n}{n+1}
\end{align}
and we need the additional condition 
\begin{align}\label{rho2}
\rho_2+\rho_4>1.
\end{align}
We note that the set of those parameters is nonvoid. 
It is reasonable to look for a solution to (\ref{NS2}) of the form 
\begin{align*}
u(x,t)=v(x,t)+\psi(t)u_s,\quad p(x,t)=\phi(x,t)+\psi(t)p_s.
\end{align*}
Then the perturbation $(v,\phi)$ 
satisfies the following initial boundary value problem
\begin{align}\label{NS3}\left\{
\begin{array}{r@{}c@{}l}
\displaystyle\frac{\partial v}{\partial t}&{}={}&\Delta v-a\displaystyle
\frac{\partial v}{\partial x_1}
-v\cdot\nabla v-\psi(t)v\cdot\nabla u_s
-\psi(t)u_s\cdot\nabla v
+(1-\psi(t))a\frac{\partial v}{\partial x_1}\\
&&\hspace{5cm}+h_1(x,t)+h_2(x,t)
-\nabla \phi,
\quad x\in D,\,t>0,\\
\nabla\cdot v&{}={}&0,\quad x\in D,\,t\geq 0,\\
v|_{\partial D}&{}={}&0,\quad t>0,\\
v&{}\rightarrow{}& 0~\quad {\rm{as}}~|x|\rightarrow \infty,\\
v(x,0)&{}={}&0,\quad x\in D,
\end{array}\right.
\end{align}
where 
\begin{align}
&h_1(x,t)=-\psi'(t)u_s,\label{h1}\\
&h_2(x,t)=\psi(t)\big(1-\psi(t)\big)
\Big(u_s\cdot\nabla u_s+a\frac{\partial u_s}{\partial x_1}\Big).\label{h2}
\end{align}
In what follows, we study the problem (\ref{NS3}) instead of (\ref{NS2}). 
In fact, if we obtain the solution $v$ of (\ref{NS3}) 
which converges to $0$ as $t\rightarrow \infty$, 
the solution $u$ of (\ref{NS2}) converges to $u_s$ as 
$t\rightarrow \infty$. 
Problem (\ref{NS3}) is converted into
\begin{align}\label{NS4}
v(t)=\int_0^t e^{-(t-\tau)A_a}P\Big[-v\cdot\nabla v
&-\psi(\tau)v\cdot\nabla u_s-\psi(\tau)u_s\cdot\nabla v\nonumber\\
&+\big(1-\psi(\tau)\big)a\frac{\partial v}{\partial x_1}
+h_1(\tau)+h_2(\tau)\Big]d\tau
\end{align}
by using the Oseen semigroup $e^{-tA_a}$ (see Section 3) as well as 
the Fujita-Kato projection $P$ 
from $L^q(D)$ onto $L^q_{\sigma}(D)$ 
associated with the Helmholtz decomposition 
(see Fujiwara and Morimoto \cite{fumo1977}, 
Miyakawa \cite{miyakawa1982} and Simader and Sohr \cite{siso1992}): 
\begin{align*}
L^q(D)=L^q_{\sigma}(D)\oplus\{\nabla p\in L^q(D)\mid
p\in L^q_{\rm{loc}}(\overline{D})\}\quad (1<q<\infty).
\end{align*}
Here,
\begin{align*}
L^q_{\sigma}(D)=\overline{C^\infty_{0,\sigma}(D)}^{\|\cdot\|_q}, \quad 
C^\infty_{0,\sigma}(D)=\{u\in C^\infty_0(D)^n\mid\nabla\cdot u=0\}.
\end{align*}

We are now in a position to give the second main theorem on 
attainability of stationary solutions. 
\begin{thm}\label{attainthm}
Let $n\geq 3$ and let 
$\psi$ be a function on $\R$ satisfying $(\ref{psidef}).$ 
We set $M=\displaystyle\max_{t\in\R}|\psi'(t)|$. Suppose that 
$\rho_1$, $\rho_2$, $\rho_3$ and $\rho_4$ satisfy 
$(\ref{rho1})$--\,$(\ref{rho2})$ and let $\delta$
be the constant in Theorem \ref{rmkrho} with $(\ref{staclass2}).$ 
Then there exists a constant 
$\varepsilon=\varepsilon(n,D)\in(0,\delta]$ 
such that if
\begin{align*}
0<(M+1)a^{\frac{n-2}{n+1}}<\varepsilon,
\end{align*} 
$(\ref{NS4})$ admits a unique solution $v$ within the class
\begin{align}\label{yr0}
Y_{0}:=\big\{v\in BC([0,\infty);L^n_{\sigma}(D))\mid
t^{\frac{1}{2}}v&\in BC((0,\infty);L^{\infty}(D)),
t^{\frac{1}{2}}\nabla v\in BC((0,\infty);L^n(D)),\nonumber\\
\lim_{t\rightarrow 0}&~t^{\frac{1}{2}}\big(
\|v(t)\|_{\infty}+\|\nabla v(t)\|_{n}\big)=0\big\}.
\end{align}
\par Moreover, we have the following.
\begin{enumerate}
\item $(sharp~decay)$\quad Let $n=3$.
Then there exists a constant 
$\varepsilon_*=\varepsilon_*(D)\in(0,\varepsilon]$ 
such that if \,$0<(M+1)a^{1/4}<\varepsilon_*$, 
the solution $v$ 
enjoys decay properties  
\begin{align}
&\|v(t)\|_q=O(t^{-\frac{1}{2}+\frac{3}{2q}-\frac{\rho_1}{2}}),\quad
\quad 3\leq \forall q\leq \infty \label{sharpdecay},\\
&\|\nabla v(t)\|_3=
O(t^{-\frac{1}{2}-\frac{\rho_1}{2}})\label{sharpdecaygrad}
\end{align}
as $t\rightarrow \infty$.\\
\quad Let $n\geq4$ and suppose that $\rho_3>1$ and 
$1<\rho_1\leq 1+\rho_3$ in 
addition to $(\ref{rho1})$ 
$($the set of those parameters is nonvoid when $n\geq 4$\,$)$. 
Then there exists a constant 
$\varepsilon_*=\varepsilon_*(n,D)\in(0,\varepsilon]$ 
such that if\, 
$0<(M+1)a^{(n-2)/(n+1)}<\varepsilon_*$, the solution $v$ 
enjoys 
\begin{align}
&\|v(t)\|_q=O(t^{-\frac{1}{2}+\frac{n}{2q}-\frac{\rho_1}{2}}),\quad
\quad n\leq \forall q\leq \infty \label{sharpdecayn},\\
&\|\nabla v(t)\|_n=
O(t^{-\frac{1}{2}-\frac{\rho_1}{2}})\label{sharpdecaygradn}
\end{align}
as $t\rightarrow\infty.$
\item $(Uniqueness)$\quad There exists a constant 
$\hat{\varepsilon}=\hat{\varepsilon}(n,D)\in(0,\varepsilon]$ 
such that if \,$0<(M+1)a^{(n-2)/(n+1)}<\hat{\varepsilon}$, 
the solution $v$ obtained above is unique even within the class 
\begin{align}\label{yr}
Y:=\{v\in BC([0,\infty);L^n_{\sigma}(D))\mid 
t^{\frac{1}{2}}v\in BC((0,\infty);L^{\infty}(D)),
t^{\frac{1}{2}}\nabla v\in BC((0,\infty);L^n(D))\}.
\end{align}
\end{enumerate}
\end{thm}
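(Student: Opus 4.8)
The plan is to solve the integral equation (\ref{NS4}) by the contraction mapping principle in the complete metric space $Y_0$, in the spirit of Kato's approach \cite{kato1984} as adapted to the Oseen semigroup in \cite{gahesh1997}. I would define $\Phi$ as the map sending $v$ to the right-hand side of (\ref{NS4}) and show that, for $(M+1)a^{(n-2)/(n+1)}$ small, $\Phi$ carries a small closed ball of $Y_0$ into itself and contracts there. The workhorse is the family of $L^q$–$L^r$ smoothing estimates for $e^{-tA_a}$ from Kobayashi and Shibata \cite{kosh1998}, of the form $\|e^{-tA_a}Pf\|_r\lesssim t^{-\frac{n}{2}(1/q-1/r)}\|f\|_q$ together with the companion gradient bounds, applied termwise to the Duhamel integrand; the weights $t^{1/2}v\in L^\infty$ and $t^{1/2}\nabla v\in L^n$ built into $Y_0$ are precisely the Kato scaling for the critical base space $L^n$.

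The estimates proceed term by term. The forcings $h_1=-\psi'u_s$ and $h_2=\psi(1-\psi)(u_s\cdot\nabla u_s+a\,\partial u_s/\partial x_1)$ are both supported in $\tau\in[0,1]$ by (\ref{psidef}), so their contributions amount to applying the semigroup to fixed data measured by (\ref{usest}); the factor $M=\max|\psi'|$ and the powers of $a$ enter linearly here. The coupling terms $\psi\,v\cdot\nabla u_s$ and $\psi\,u_s\cdot\nabla v$ are linear in $v$ with coefficients bounded by $\|u_s\|_{\alpha_i}=O(a^{(n-1)/(n+1)})$ and $\|\nabla u_s\|_{\beta_i}=O(a^{n/(n+1)})$, hence absorbed for small $a$; the drift remainder $(1-\psi)a\,\partial v/\partial x_1$ carries a factor $a$ and support in $\tau\le1$, and the genuinely nonlinear $v\cdot\nabla v$ yields the quadratic contraction bound. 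Throughout, each Duhamel integral must be checked for integrability as $\tau\to0$ and $\tau\to t$ and for the correct decay as $t\to\infty$; the constraints (\ref{rho1})--(\ref{rho2}) on $(\rho_1,\rho_2,\rho_3,\rho_4)$ are exactly what keeps every resulting Hölder exponent admissible and every time singularity integrable.

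For the sharp decay in part (1) I would bootstrap from the $Y_0$ solution. The slowest-decaying piece is the $h_1$ term: since $u_s\in L^{\alpha_1}=L^{n/(1+\rho_1)}$ with $1/\alpha_1=(1+\rho_1)/n$, the smoothing estimate gives, for $t$ large,
\[
\Big\|\int_0^{\min(t,1)}e^{-(t-\tau)A_a}P\big(-\psi'(\tau)u_s\big)\,d\tau\Big\|_q
\lesssim M\,t^{-\frac{1+\rho_1}{2}+\frac{n}{2q}},
\]
which is precisely the rate in (\ref{sharpdecay}) and (\ref{sharpdecayn}); the added hypotheses $\rho_3>1$, $1<\rho_1\le1+\rho_3$ (for $n\ge4$) and $\rho_2+\rho_4>1$ ensure that all remaining terms decay at least this fast, so the sum inherits the rate. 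This is where $u_s\in L^q(D)$ with $q<n$ is essential and produces the improvement over (\ref{inftyconvergence}). Part (2), uniqueness within the larger class $Y$, I would treat separately: given two solutions $v_1,v_2\in Y$, I would estimate their difference on a short interval $[0,T]$, exploiting $v_i\in BC([0,\infty);L^n_\sigma(D))$ with $v_i(0)=0$ to extract the smallness needed to close a contraction as $T\to0$, forcing $v_1=v_2$ near the origin, and then propagate by continuation.

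The main obstacle will be balancing the coupling terms $u_s\cdot\nabla v$ and $v\cdot\nabla u_s$, because $u_s$ and $\nabla u_s$ lie only in the specific spaces $L^{\alpha_i}(D)$ and $L^{\beta_i}(D)$ rather than in a full scale. For each application of the semigroup estimate one must choose a Hölder pairing whose output exponent is admissible and whose endpoint time singularities at $\tau\to0$ and $\tau\to t$ remain integrable, and it is this bookkeeping that dictates the precise ranges (\ref{rho1})--(\ref{rho2}); verifying that a single consistent choice closes all the required norms simultaneously is the delicate heart of the argument.
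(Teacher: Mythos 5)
Your existence argument (part 0) follows essentially the paper's own route: a fixed-point/iteration scheme in $Y_{0}$ built on the $L^q$--$L^r$ estimates of Proposition \ref{proplqlr}, with $h_1,h_2$ supported in $\tau\in[0,1]$ and the $u_s$-coupling terms absorbed by the smallness (\ref{usest}); that part is sound. The two genuine gaps are in parts (1) and (2).

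For the sharp decay your reasoning is circular. The terms $G_1=\int_0^te^{-(t-\tau)A_a}P[v\cdot\nabla v]\,d\tau$, $G_2$ and $G_3$ are not forcings with prescribed decay: they decay only as fast as $v$ itself does, which is exactly what has to be proven, and the $Y_0$ bounds alone give \emph{no} decay of $\|v(t)\|_n$ at all. So the claim that $h_1$ is the slowest-decaying piece and ``all remaining terms decay at least this fast'' assumes the conclusion. The paper instead (i) proves a slow decay $\|v(t)\|_n=O(t^{-\rho/2})$, $\rho\le\min\{\rho_1,\rho_3\}$, $\rho<1$, by an absorption argument costing a further smallness of $a$ (Lemma \ref{propsharpdecay}), and (ii) improves it step by step (Proposition \ref{propsharp3}, with $\sigma_k=\min\{k\rho_3/2,\rho_1/2\}$, needed already for $n=3$ when $\rho_3<\rho_1$). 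Crucially, for $n\ge4$ this bootstrap provably stalls: with only $L^n$-based information the nonlinear term obeys $\int_0^{t/2}(t-\tau)^{-1/2}\|v\|_n\|\nabla v\|_n\,d\tau\le Ct^{-1/2}\int_0^{t/2}\tau^{-1/2}(1+\tau)^{-2\sigma}\,d\tau=O(t^{-1/2})$ once the current rate $\sigma$ exceeds $1/4$, so no iteration of this type can pass $t^{-1/2}$, while the target is $t^{-\rho_1/2}$ with $\rho_1>1$. This is precisely why the paper proves the auxiliary Lemma \ref{lemsharp}, the $L^{q_0}$-decay $\sup_{\tau>0}(1+\tau)^\gamma\|v(\tau)\|_{q_0}<\infty$ with $q_0=n/(1+\rho_1-2\gamma)<n$ (established by induction on the approximating sequence $v_m$, not on $v$ directly), which upgrades the kernel singularity to $(t-\tau)^{-n/(2q_0)}$ with $n/(2q_0)>1/2$ in (\ref{I}). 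This mechanism is entirely absent from your proposal, and without it (\ref{sharpdecayn})--(\ref{sharpdecaygradn}) are out of reach.

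For uniqueness within $Y$ your smallness extraction fails as stated. Continuity of $v_i$ in $L^n_\sigma$ with $v_i(0)=0$ makes $\sup_{\tau\le T}\|v_i(\tau)\|_n$ small, but gives no smallness of the weighted quantities $\tau^{1/2}\|\nabla v_i(\tau)\|_n$ and $\tau^{1/2}\|v_i(\tau)\|_\infty$ near $\tau=0$, which for elements of $Y$ (unlike $Y_0$) are merely bounded. With the standard Fujita--Kato pairings, i.e.\ those behind the paper's own estimate (\ref{v1v2t1}), the contraction coefficient of the term $w\cdot\nabla v_1$ (where $w=v_1-v_2$) is $\sup_{\tau<T}\tau^{1/2}\|\nabla v_1(\tau)\|_n$, which need not tend to $0$ as $T\to0$ for a general $v_1\in Y$; hence the short-time contraction does not close, and this is exactly the obstruction that leads the paper to Brezis's argument: using the auxiliary solutions $T(t)v(s)$ of (\ref{bint}), uniqueness in $Y_0$ (Lemma \ref{yr0uniqueness}, Remark \ref{rmkunique}) and the compactness statement (\ref{et0k}), one shows every $Y$-solution automatically satisfies the vanishing condition in (\ref{yr0}). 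Your strategy could in principle be repaired without Brezis: rewrite all terms in which the derivative falls on $w$ in divergence form, $v_2\cdot\nabla w=\nabla\cdot(v_2\otimes w)$, $u_s\cdot\nabla w=\nabla\cdot(u_s\otimes w)$, $\partial_1 w=\nabla\cdot(e_1\otimes w)$, and also $w\cdot\nabla v_1=\nabla\cdot(w\otimes v_1)$, estimate via (\ref{dual}), and close the contraction in the norm $\sup_{\tau<T}\|w(\tau)\|_n+\sup_{\tau<T}\tau^{1/2}\|w(\tau)\|_\infty$ only (no gradient seminorm), so that every coefficient carries a factor $\big(\sup_{\tau<T}\|v_i(\tau)\|_n\big)^{1/2}\to0$; but this device is the crux of that route, and it does not appear in your sketch.
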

\par For the sharp decay properties 
(\ref{sharpdecay})--(\ref{sharpdecaygradn}),  
the key step is to prove the $L^n$-decay of the solution, that is, 
\begin{align}\label{ninfty}
\|v(t)\|_n=O(t^{-\frac{\rho_1}{2}})
\end{align}
as $t\rightarrow\infty$. 
Once we have (\ref{ninfty}), the other decay properties can be derived 
by the similar argument to \cite{ensh2005}. 
Note that the condition 
$\rho_1\leq 1+\rho_3$ is always fulfilled and thus redundant 
for $n=3$ since $\rho_1<1/2$ and $\rho_3<1/4$. 
On the other hand, it is enough for $n\geq 4$ to consider the case 
$\rho_1,\rho_3>1$. 
To prove (\ref{ninfty}), we first derive slower decay
\begin{align*}
\|v(t)\|_n=O(t^{-\frac{\rho}{2}})
\end{align*}
with some $\rho\in(0,1)$ by making use of 
$u_s\in L^{n/(1+\rho_1)}(D)$ and $\nabla u_s\in L^{n/(2+\rho_3)}(D)$, 
see Lemma \ref{propsharpdecay} in Section 3. When $n=3$, one can take 
$\rho:=\min\{\rho_1,\rho_3\}$, yielding better decay properties 
of the other norms of the solution. 
With them at hand, we repeat improvement of the 
estimate of $\|v(t)\|_n$ step by step to find (\ref{ninfty}).
However, this procedure does not work for $n\geq 4$ because of $\rho_1>1$. 
In order to get around the difficulty, our idea is to deduce 
the $L^{q_0}$-decay of the solution with some $q_0<n$, that is 
appropriately chosen, see Lemma \ref{lemsharp}. 
We are then able to repeat improvement of estimates of several terms to 
arrive at (\ref{ninfty}), where the argument is more involved 
than the three-dimensional case above. 
Finally, to prove the uniqueness within $Y$, 
we employ the idea developed by Brezis \cite{brezis1994}, which 
shows that the solution $v\in Y$ necessarily satisfies the behavior as 
$t\rightarrow 0$ in (\ref{yr0}). 
\par In the next section we introduce the $L^q$-theory 
of the Oseen system and then prove Theorem \ref{rmkrho}. 
The final section is devoted to the proof of Theorem \ref{attainthm}.

\section{Proof of Theorem \ref{rmkrho}}
~~~In order to prove Theorem \ref{rmkrho}, we first recall the result on 
the Oseen boundary value problem due to 
Galdi \cite[Theorem VII.7.1]{galdi2011}, 
see also Galdi \cite{galdi1992oseen} for the first proof of this result. 
\begin{prop}\label{proplinear}
Let $n\geq3$ and let $D\subset\R^n$ be an exterior domain with 
$C^2$ boundary. Suppose $a>0$ and $1<q<(n+1)/2$. 
Given $f\in L^q(D)$ and $u_*\in W^{2-1/q,q}(\partial D)$, problem  
\begin{align}\label{stalinear}
\left\{
\begin{array}{r@{}c@{}ll}
\Delta u-a\,\displaystyle\frac{\partial u}{\partial x_1}
&{}={}&\nabla p+f,
&\quad x\in D,\\
\nabla\cdot u&{}={}&0,&\quad x\in D,\\
u|_{\partial D}&{}={}&u_*,\\
u&{}\rightarrow{}& 0&\quad {\rm{as}}~|x|\rightarrow \infty
\end{array}\right.
\end{align}
admits a unique $($up to an additive constant for $p)$ solution $(u,p)$ 
within the class
\begin{align*}
X_q(n):=\Big\{(u,p)\in L^1_{\rm{loc}}(D)\,\Big|\,
u\in L^{s_2}(D),\,&\nabla u\in L^{s_1}(D),\,\nabla^2u\in L^{q}(D),\nonumber
\\&
\displaystyle\frac{\partial u}{\partial x_1}\in 
L^q(D),\,\nabla p\in L^{q}(D)\Big\},
\end{align*}
where 
\begin{align}\label{s}
\frac{1}{s_1}=\frac{1}{q}-\frac{1}{n+1},\quad\frac{1}{s_2}=\frac{1}{q}
-\frac{2}{n+1}. 
\end{align}
Here, by 
$W^{2-1/q,q}(\partial D)$ 
we denote the trace space on $\partial D$ 
from the Sobolev space $W^{2,q}(D)$ 
$($see, for instance, \cite{adams1975} and \cite{galdi2011}$)$. 
\par If, in particular, $a\in (0,1]$ and $q<n/2$, 
then the solution $(u,p)$ obtained above satisfies 
\begin{align*}
a^{\frac{2}{n+1}}\|u\|_{s_2}+
a\left\|\displaystyle\frac{\partial u}{\partial x_1}\right\|_q+
a^{\frac{1}{n+1}}\|\nabla u\|_{s_1}
+\|\nabla^2 u\|_{q}+\|\nabla p\|_{q}\leq
C\big(\|f\|_q+\|u_*\|_{W^{2-\frac{1}{q},q}(\partial D)}\,\big)
\end{align*}
with a constant $C>0$ dependent on $q,n$ and $D$, 
however, independent of $a$.
\end{prop}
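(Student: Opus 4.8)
The plan is to treat the Oseen boundary value problem (\ref{stalinear}) as a compactly supported perturbation of the whole-space Oseen problem, for which an explicit potential theory is available through the Oseen fundamental tensor ${\bf E}$. First I would reduce to homogeneous boundary data: using trace theory for the $C^2$ domain $D$, choose a solenoidal extension $U_*$ of $u_*$, supported in a fixed bounded neighborhood of $\partial D$, with $\|U_*\|_{W^{2,q}(D)}\leq C\|u_*\|_{W^{2-1/q,q}(\partial D)}$, and seek $u-U_*$ with zero boundary value. Since $U_*$ is compactly supported, replacing $f$ by $f-(\Delta U_*-a\partial_1 U_*)$ keeps the forcing in $L^q(D)$ with norm bounded by $\|f\|_q+\|u_*\|_{W^{2-1/q,q}(\partial D)}$, and for $a\leq 1$ the drift contribution $a\partial_1 U_*$ is controlled uniformly in $a$. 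Hence it suffices to solve (\ref{stalinear}) with $u_*=0$ and to estimate $(u,p)$ by $\|f\|_q$ alone.

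For the whole space I would use the representation $u={\bf E}*f$ together with the associated pressure kernel. The top-order quantities $\nabla^2 u$, $\partial u/\partial x_1$ and $\nabla p$ are given by singular integrals of Calder\'on--Zygmund type, which yields the $a$-independent $L^q\to L^q$ bounds underlying $\|\nabla^2 u\|_q+\|\nabla p\|_q$ and, after the rescaling below, the weighted term $a\|\partial u/\partial x_1\|_q$. The lower-order quantities $u$ and $\nabla u$ gain integrability exactly as prescribed by the summability of ${\bf E}$ recorded in (\ref{fundamentalint}): a Hardy--Littlewood--Sobolev (weak Young) estimate against the anisotropic kernels ${\bf E}$ and $\nabla{\bf E}$ produces $u\in L^{s_2}$ and $\nabla u\in L^{s_1}$ with $s_1,s_2$ as in (\ref{s}), the shift by $1/(n+1)$ in place of $1/n$ being the quantitative signature of the parabolic anisotropy of the wake. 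The explicit powers $a^{2/(n+1)}$, $a^{1/(n+1)}$ and $a$ are then read off from the scaling $y=ax$: under $\tilde u(y)=u(y/a)$ the Oseen operator of strength $a$ is carried to the one of strength $1$, and matching the $L^{s_2}$, $L^{s_1}$, $L^q$ norms across this rescaling produces precisely these weights, with $a\in(0,1]$ used to absorb all higher powers of $a$.

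Next I would localize with a cut-off $\chi$ equal to $1$ near $\partial D$ and $0$ outside a large ball, writing $u$ as the sum of an interior part and an exterior part. On the support of $\chi$ the drift $a\partial_1$ is genuinely lower order, so the bounded-domain $L^q$ Stokes theory together with up-to-boundary elliptic regularity for the $C^2$ boundary controls $\nabla^2 u$ and $\nabla p$ near $\partial D$, while away from $\partial D$ the exterior part is governed by the whole-space analysis above. Patching generates commutator terms involving $\nabla\chi$ that are supported in a fixed annulus, hence lie in every $L^q$ and are of lower order; these are absorbed by a continuity argument in $a$ anchored at the solvable exterior Stokes problem $a=0$, combined with the a priori estimate. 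Uniqueness within $X_q(n)$ I would obtain by a Liouville-type duality argument: the difference of two solutions is a homogeneous solution lying in the summability class, hence decays at infinity through the tails encoded in (\ref{fundamentalint}), so that a duality/energy computation against it has no contribution from infinity and forces $u\equiv 0$ with $p$ constant.

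The main obstacle is the $a$-uniformity of the weighted estimate, asserted for $a\in(0,1]$ under the stronger restriction $q<n/2$ rather than merely $q<(n+1)/2$. The clean scaling that extracts the powers of $a$ in free space does not respect the fixed obstacle $\partial D$, so one must reconcile the anisotropically rescaled far-field bounds with the $a$-independent boundary estimates and show that the compactly supported commutator terms remain controlled uniformly as $a\to 0^+$ --- exactly the degenerate regime in which the Oseen system collapses to Stokes and the far-field summability (\ref{fundamentalint}) degrades. The hypothesis $q<n/2$ keeps $s_2<n(n+1)/2$, placing $u$ strictly inside the summability range where the gain $L^q\to L^{s_2}$ has a definite margin, which is what makes the extension and commutator terms absorbable uniformly in $a$ so that the weights $a^{2/(n+1)}$, $a^{1/(n+1)}$, $a$ persist through the patching; making this absorption quantitative and uniform in $a$ is the crux of the proof.
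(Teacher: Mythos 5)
There is nothing in the paper to compare against: Proposition \ref{proplinear} is not proved here at all, but imported verbatim from Galdi \cite[Theorem VII.7.1]{galdi2011} (the text says ``we first recall the result \dots due to Galdi''). So the relevant benchmark is Galdi's proof in that reference, and your sketch does follow its broad architecture: whole-space Oseen estimates with the powers of $a$ extracted by the scaling $y=ax$, localization by cut-off to patch the far field with interior estimates near the $C^2$ boundary, and uniqueness by an energy/duality argument exploiting the summability class $X_q(n)$ (the paper itself runs exactly such a cut-off energy argument in the closely related Lemma \ref{identify}). The main methodological deviation is in free space: Galdi derives the $L^q$ estimates via the Lizorkin--type Fourier multiplier theorem rather than weak Young/HLS bounds on $\mathbf{E}*f$; your convolution route does reproduce the exponents in (\ref{s}) from (\ref{fundamentalint}), but to make it rigorous you would need the membership of $\mathbf{E}$ and $\nabla\mathbf{E}$ in the corresponding \emph{weak} Lebesgue classes, which in turn rests on the sharp anisotropic pointwise estimates of the Oseen tensor --- so nothing is really gained over the multiplier approach, and the borderline exponents are more delicate.

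Two concrete soft spots. First, your reduction to $u_*=0$ via a \emph{solenoidal, compactly supported} extension of $u_*$ is not available for general $u_*\in W^{2-1/q,q}(\partial D)$: it requires the flux compatibility $\int_{\partial D}u_*\cdot\nu\,dS=0$, and otherwise one must add a flux carrier with $|x|^{1-n}$ decay, as Galdi does. (In the paper's application $u_*=-ae_1$ the flux vanishes, but the proposition is stated for general $u_*$.) Second, and more seriously, the uniform-in-$a$ absorption of the cut-off commutator terms as $a\to0^+$, which you correctly identify as ``the crux,'' is precisely where the actual content of Galdi's proof lies --- including the mechanism by which the stronger hypothesis $q<n/2$ enters, namely compatibility with the exterior \emph{Stokes} estimates that govern the degenerate limit; your proposal names this difficulty and gestures at ``a continuity argument in $a$'' but does not supply the argument. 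As a blind reconstruction it is a faithful outline of the known proof strategy, but at the one point where the theorem is genuinely hard it remains an outline rather than a proof.
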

For later use, we prepare the following lemma. 
The proof is essentially same as  
the one of Young's inequality for convolution, thus we omit it.   
\begin{lem}\label{young}
Let $R_0,\,d>0$. Assume that $1\leq q,s\leq\infty$ and 
$1/q+1/s\geq 1$. Suppose $u\in L^q(\R^n)$ 
with ${\rm supp}\,u\subset B_d:=\{x\in\R^n\mid |x|<d\}$ 
and $\rho\in L^s(\R^n\setminus B_{R_0})$. 
Then for all 
$R\geq R_0+d$, $\rho*u$ is well-defined as an element of 
$\,L^r(\R^n\setminus B_R)$ together with 
\begin{align*}
\|\rho*u\|_{L^r(\R^n\setminus B_{R})}\leq 
\|\rho\|_{L^s(\R^n\setminus B_{R_0})}\|u\|_{L^q(B_d)},
\end{align*}
where $*$ denotes the convolution and $1/r:=1/q+1/s-1$.
\end{lem}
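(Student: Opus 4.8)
The plan is to reduce the assertion to the classical Young convolution inequality on all of $\R^n$, the only new ingredient being a separation estimate that exploits the gap between the support of $u$ and the region on which we measure $\rho*u$.

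First I would isolate the elementary geometric fact that drives everything. If $|x|\geq R$ with $R\geq R_0+d$ and if $y\in B_d$, then $|x-y|\geq|x|-|y|>R-d\geq R_0$, so $x-y\in\R^n\setminus B_{R_0}$. Because $u$ vanishes outside $B_d$, this shows that for every $x$ with $|x|\geq R$ the integrand in $(\rho*u)(x)=\int_{B_d}\rho(x-y)u(y)\,dy$ only evaluates $\rho$ on $\R^n\setminus B_{R_0}$, precisely where $\rho$ is assumed to lie in $L^s$. Thus the convolution is meaningful on $\R^n\setminus B_R$ and is insensitive to any (undefined) values of $\rho$ inside $B_{R_0}$; this is exactly what forces the hypothesis $R\geq R_0+d$.

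Next I would pass to genuinely global functions so as to invoke Young directly. Let $\tilde\rho$ denote the extension of $\rho$ by zero, i.e.\ $\tilde\rho=\rho$ on $\R^n\setminus B_{R_0}$ and $\tilde\rho=0$ on $B_{R_0}$, so that $\tilde\rho\in L^s(\R^n)$ with $\|\tilde\rho\|_{L^s(\R^n)}=\|\rho\|_{L^s(\R^n\setminus B_{R_0})}$, and view $u\in L^q(\R^n)$ with $\|u\|_{L^q(\R^n)}=\|u\|_{L^q(B_d)}$. By the separation estimate above, $\tilde\rho(x-y)=\rho(x-y)$ for all $y\in B_d$ whenever $|x|\geq R$, hence $(\tilde\rho*u)(x)=(\rho*u)(x)$ for all such $x$.

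Finally, since $1/q+1/s\geq1$ we have $1/r=1/q+1/s-1\in[0,1]$, so $1\leq r\leq\infty$ and the classical Young inequality applies to the everywhere-defined $\tilde\rho*u$, giving $\tilde\rho*u\in L^r(\R^n)$ together with $\|\tilde\rho*u\|_{L^r(\R^n)}\leq\|\tilde\rho\|_{L^s(\R^n)}\|u\|_{L^q(\R^n)}$. Restricting to $\R^n\setminus B_R$ and using the pointwise identity $\tilde\rho*u=\rho*u$ there yields both the asserted membership $\rho*u\in L^r(\R^n\setminus B_R)$ and the bound $\|\rho*u\|_{L^r(\R^n\setminus B_R)}\leq\|\rho\|_{L^s(\R^n\setminus B_{R_0})}\|u\|_{L^q(B_d)}$. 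I expect no genuine obstacle here: the whole content is the separation estimate of the second step, after which the estimate is the textbook Young inequality (proved by a single application of H\"older, or by interpolating the endpoint cases $r=q$ with $s=1$ and $r=\infty$ with $1/q+1/s=1$).
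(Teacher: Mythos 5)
Your proof is correct, and it takes exactly the route the paper has in mind: the paper omits the proof, remarking only that it is ``essentially the same as the one of Young's inequality for convolution,'' and your argument supplies precisely those details (the separation estimate $|x-y|>R-d\geq R_0$ for $|x|\geq R$, $y\in B_d$, followed by extension of $\rho$ by zero and the classical Young inequality). No gaps: the condition $1/q+1/s\geq 1$ guarantees $r\in[1,\infty]$, and the pointwise identity $\tilde\rho*u=\rho*u$ on $\R^n\setminus B_R$ legitimately transfers both well-definedness and the norm bound.
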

When the external force $f$ is taken from $L^{q_1}(D)\cap L^{q_2}(D)$ with 
$1<q_1,q_2<(n+1)/2$ and $q_1\ne q_2$, we can 
apply Proposition \ref{stalinear} to $f\in L^{q_i}(D)$ $(i=1,2)$. 
The following tells us that 
the corresponding solutions coincide with each other. 
\begin{lem}\label{identify}
Suppose $n\geq 3$, 
$1<q_1,q_2<(n+1)/2$ and $f\in L^{q_1}(D)\cap L^{q_2}(D)$. Let 
$(u_i,p_i)$ be a unique solution obtained in 
Proposition \ref{stalinear} with $f\in L^{q_i}(D)$ and $u_*=-ae_1$. 
Then $u_1=u_2$.
\end{lem}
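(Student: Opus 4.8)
The plan is to form the difference $w=u_1-u_2$, $\pi=p_1-p_2$, which by linearity solves the \emph{homogeneous} Oseen problem \eqref{stalinear} in $D$ with $f=0$ and $u_*=0$. Since Proposition \ref{proplinear} already asserts uniqueness within the class $X_q(n)$, it suffices to show that $w$ belongs to $X_{q_1}(n)$ (after relabelling so that $q_1\le q_2$); the uniqueness applied to the homogeneous data then forces $w\equiv 0$, that is $u_1=u_2$. The only nontrivial point is the summability of $w$ at spatial infinity: a priori one merely knows $u_1\in L^{s_2(q_1)}(D)$ and $u_2\in L^{s_2(q_2)}(D)$ with the exponents in \eqref{s}, and since $s_2(q_1)\le s_2(q_2)$ these cannot be compared on the unbounded region. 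By interior and boundary elliptic regularity for the Oseen system, $w$ is smooth in $D$ and of class $C^2$ up to $\partial D$, so all norms of $w$, $\nabla w$, $\nabla^2 w$ and $\nabla\pi$ over bounded subsets of $\overline D$ are finite; the issue is genuinely the behaviour as $|x|\to\infty$.

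To capture this behaviour I would exploit that $w$ solves a \emph{source-free} equation, which makes a fundamental-solution representation available with only compactly supported data. Fix $R_0$ with $\O\subset B_{R_0}$ and choose $\chi\in C^\infty(\R^n)$ with $\chi\equiv 0$ on $B_{R_0}$ and $\chi\equiv 1$ on $\R^n\setminus B_{R_0+1}$. Then $W:=\chi w$, extended by zero into $B_{R_0}$, and $\Pi:=\chi\pi$ are defined on all of $\R^n$ and satisfy $\Delta W-a\,\partial_{x_1}W-\nabla\Pi=F$ and $\nabla\cdot W=g$, where the commutator terms $F=2\nabla\chi\cdot\nabla w+(\Delta\chi)w-a(\partial_{x_1}\chi)w-(\nabla\chi)\pi$ and $g=\nabla\chi\cdot w$ are supported in the annulus $R_0\le|x|\le R_0+1$ and hence lie in every $L^q(\R^n)$ with compact support. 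Representing $W$ through the Oseen fundamental tensor ${\bf E}$ (together with the associated pressure kernel, or a compactly supported Bogovski\u{\i} corrector, to absorb the divergence $g$) yields, up to smooth compactly supported corrections, $W={\bf E}*F$, $\nabla W=\nabla{\bf E}*F$, $\nabla^2 W=\nabla^2{\bf E}*F$.

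It remains to transfer the summability of ${\bf E}$ to $W$ by Lemma \ref{young}. Since $F$ is supported in $B_{R_0+1}$ and belongs to $L^1(\R^n)$, applying Lemma \ref{young} with $q=1$ and the summability \eqref{fundamentalint} of ${\bf E}$ gives $W\in L^r(\R^n\setminus B_R)$ for every $r>\tfrac{n+1}{n-1}$ and $\nabla W\in L^\rho(\R^n\setminus B_R)$ for every $\rho>\tfrac{n+1}{n}$ once $R$ exceeds $R_0+1$; the analogous estimates for the derivative kernels place $\nabla^2 W$, $\partial_{x_1}W$ and $\nabla\Pi$ in $L^{q_1}(\R^n\setminus B_R)$ because the corresponding kernels are summable there for every exponent $>1$. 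Since $W=w$ outside $B_{R_0+1}$, the crucial comparison is that $s_2(q_1)>\tfrac{n+1}{n-1}$ and $s_1(q_1)>\tfrac{n+1}{n}$, both of which reduce precisely to $q_1>1$ through the identities \eqref{s}; the borderline summability of the fundamental solution in \eqref{fundamentalint} therefore covers exactly the exponents demanded by $X_{q_1}(n)$. Combining the far-field bounds with the local regularity from the first step shows $w\in X_{q_1}(n)$.

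With $w\in X_{q_1}(n)$ solving the homogeneous problem, the uniqueness assertion of Proposition \ref{proplinear} (with $f=0$, $u_*=0$) gives $w=0$, hence $u_1=u_2$. I expect the main obstacle to be the second step: setting up the fundamental-solution representation of $W$ so that the divergence constraint $\nabla\cdot W=g$ is correctly accounted for, and justifying that the compactly supported commutator source and its convolution with ${\bf E}$ indeed produce the stated summability through Lemma \ref{young}. Everything else is either linearity, standard interior and boundary regularity, or the elementary exponent arithmetic $q_1>1$.
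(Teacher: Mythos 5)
Your proposal is correct in outline, and its first half is essentially the paper's own argument: the paper also forms the difference, cuts it off near the body (using the Bogovski\u{\i} operator $\B[u\cdot\nabla\zeta]$ to restore the divergence-free condition, rather than carrying the divergence $g$ along as you suggest), shows via a Fourier-support argument that the cut-off difference equals ${\bf E}*g$ plus a polynomial, kills the polynomial by the known summability of $u_1,u_2$, and then applies Lemma \ref{young} with $q=1$ to obtain the far-field summability (\ref{uint}). Where you genuinely diverge is the endgame. The paper never tries to place $(w,\pi)$ in $X_{q_1}(n)$: it only extracts summability of $u$, $\nabla u$ and $p-\overline{p}$ (exponents $>(n+1)/(n-1)$, $>(n+1)/n$, $>n/(n-1)$), and then runs a Caccioppoli-type energy argument, testing the homogeneous equation against $\varphi_R u$ and letting $R\to\infty$; the cut-off terms vanish because $|\nabla u||u|$, $|u|^2$, $(p-\overline{p})|u|$ lie in $L^{n/(n-1)}$ outside a ball while $\|\nabla\varphi_R\|_n$ is bounded uniformly in $R$, giving $\|\nabla u\|_2=0$ directly. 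This requires nothing about second derivatives of the kernel. Your route instead needs full membership in $X_{q_1}(n)$, hence also summability at infinity of $\nabla^2{\bf E}$, $\partial_{x_1}{\bf E}$ and the gradient of the pressure kernel for every exponent $>1$; these facts are true (they are in Galdi's Section VII.3, the streamwise derivative behaving like a second derivative thanks to the anisotropy), but they go beyond what the paper quotes in (\ref{fundamentalint}), which covers only ${\bf E}$ and $\nabla{\bf E}$. Your exponent arithmetic ($s_2(q_1)>(n+1)/(n-1)$ and $s_1(q_1)>(n+1)/n$ if and only if $q_1>1$) is correct, and invoking the uniqueness clause of Proposition \ref{proplinear} with $f=0$, $u_*=0$ is legitimate. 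Two details you should not gloss over if you carry this out: (i) the representation $W={\bf E}*F$ holds a priori only up to an additive \emph{polynomial} (tempered solutions of the homogeneous Oseen system have Fourier transform supported at the origin), and the polynomial must be eliminated using the known $L^{s_2(q_1)}+L^{s_2(q_2)}$ summability of $w$ --- ``smooth compactly supported corrections'' does not cover this; (ii) if you keep $\nabla\cdot W=g\ne 0$ you must add the corresponding potential-theoretic term to the representation, which is exactly why the paper prefers the Bogovski\u{\i} corrector. With those repaired, your argument is a valid alternative: it trades the paper's elementary energy argument for stronger kernel estimates plus the linear uniqueness theorem.
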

\begin{proof}
We first show that $u_1-u_2$ behaves like the Oseen fundamental solution 
$\bf E$ at large distances. 
We fix $R_0>0$ satisfying 
$\R^n\setminus D\subset B_{R_0}$. 
Let $\zeta\in C^\infty(\R^n)$ be a cut-off function 
such that $\zeta(x)=0$ for $|x|\leq R_0$, 
$\zeta(x)=1$ for $|x|\geq R_0+1$, and set 
\begin{alignat*}{2}
u(x)&:=u_1(x)-u_2(x),&\quad\quad p(x)&:=p_1(x)-p_2(x),
\\
 v(x)&:=\zeta(x) u(x)-\B[u\cdot \nabla\zeta],&\pi(x)&:=\zeta(x)p(x).
\end{alignat*}
Here, $\B$ is the Bogovski\u{\i} operator defined on the domain 
$B_{R_0+1}\setminus B_{R_0}$, see Bogovski\u{\i} \cite{bogovskii1979}, 
Borchers and Sohr \cite{bosh1990} and Galdi \cite{galdi2011}. 
Then we have 
\begin{align}\label{temp}
-\Delta v+a\frac{\partial v}{\partial x_1}+\nabla \pi=g(x),\quad 
\nabla\cdot v=0\quad {\rm{in}}~\S'(\R^n),
\end{align}
where $\S'(\R^n)$ is the set of tempered distributions on $\R^n$ and 
\begin{align*}
g(x)=-(\Delta \zeta)u-2(\nabla \zeta\cdot\nabla)u
+a\frac{\partial \zeta}{\partial x_1}u+p\nabla \zeta+
\Big(\Delta-a\frac{\partial}{\partial x_1}\Big)\B[u\cdot \nabla\zeta].
\end{align*}
For (\ref{temp}) with $g=0$, we have supp\,$\hat{v}\subset \{0\}$ and 
supp\,$\hat{\pi}\subset \{0\}$,
where $\hat{(\cdot)}$ denotes the Fourier transform. We thus find 
\begin{align*}
v(x)=\int_{\R^n} {\bf{E}}(x-y)g(y)\,dy+P(x),\quad\quad
\pi(x)=C(n)\int_{\R^n} \frac{x-y}{|x-y|^n}\cdot g(y)\,dy+Q(x)
\end{align*}
with some polynomials $P(x)$, $Q(x)$ and some constant $C(n)$. 
In view of 
$v\in L^{(\,1/q_1-2/(n+1)\,)^{-1}}(\R^n)$
$+L^{(\,1/q_2-2/(n+1)\,)^{-1}}(\R^n)$ 
and $\nabla \pi\in L^{q_1}(\R^n)+L^{q_2}(\R^n)$, we have 
$P(x)=0$ and $Q(x)=\overline{p}$. Here,  
$\overline{p}$ is some constant.  
Then Lemma \ref{young} 
with 
\begin{align*}
\rho={\bf E},~\nabla {\bf E},~\frac{x-y}{|x-y|^n}, 
\end{align*}
$u=g$, $d=R_0+1$, 
$q=1$ and $r=s$ leads us to 
\begin{align}\label{uint}
&u\in L^q(\R^n\setminus B_{2R_0+1}),\quad\quad
\nabla u\in L^r(\R^n\setminus B_{2R_0+1}),\quad
\quad p-\overline{p}\in L^s(\R^n\setminus B_{2R_0+1})
\end{align}  
for all $q>(n+1)/(n-1)$, $r>(n+1)/n$ and $s>n/(n-1)$, 
see (\ref{fundamentalint}). 
\par Let $\varphi\in C^{\infty}[0,\infty)$ be a cut-off function such that 
$\varphi(t)=1$ for $t\leq 1$, $\varphi(t)=0$ for $t\geq 2$, and set 
$\varphi_R(x)=\varphi(|x|/R)$ for $R\geq 2R_0+1$, 
$x\in\R^n$. We note 
that there exists a constant $C>0$ independent of $R$ such that 
\begin{align}\label{independentc}
\|\nabla \varphi_R\|_n\leq C.
\end{align}
It follows from 
\begin{align*}
-\Delta u+a\frac{\partial u}{\partial x_1}+\nabla p=0,\quad 
\nabla\cdot u=0\quad {\rm{in}}~D,\quad u|_{\partial D}=0
\end{align*}
that 
\begin{align}
0&=\int_D\Big\{-\Delta u
+a\frac{\partial u}{\partial x_1}
+\nabla (p-\overline{p})\Big\}\cdot (\varphi_R u)\,dx\nonumber\\
&=\int_D|\nabla u|^2\varphi_R\,dx
+\int_{R\leq|x|\leq 2R}\Big\{(\nabla u\cdot\nabla\varphi_R)u-\frac{a}{2}
\frac{\partial \varphi_R}{\partial x_1}|u|^2-
(p-\overline{p})\nabla \varphi_R\cdot u\Big\}\,dx.\label{ur}
\end{align} 
Since we can see
\begin{align*} 
|\nabla u||u|,\,|u|^2,\,(p-\overline{p})|u|\in L^{n/(n-1)}
\big(\R^n\setminus B_{2R_0+1}) 
\end{align*}
from (\ref{uint}), 
letting $R\rightarrow \infty$ in (\ref{ur}) yields 
$\|\nabla u\|^2_2=0$ because of (\ref{independentc}). 
From this together with 
$u|_{\partial D}=0$, we conclude $u_1=u_2$. 
\end{proof}
\noindent {\bf Proof of Theorem \ref{rmkrho}}\quad Let $n\geq 3$ and  
let $(\alpha_1,\alpha_2,\beta_1,\beta_2)$ satisfy (\ref{parameter1}). 
We first choose parameters $(q_1,q_2,r_1,r_2)$ satisfying 
\begin{align}
&\frac{n+1}{n-1}<\alpha_1\leq q_1\leq n+1
\leq q_2\leq \alpha_2<\frac{n(n+1)}{2},\label{parameter2}\\
&\frac{n+1}{n}<\beta_1\leq r_1\leq \frac{n+1}{2}
\leq r_2\leq \beta_2<\frac{n(n+1)}{n+2},\label{parameter3}\\
&\max \left\{\frac{1}{\alpha_1}+\frac{2}{n+1},
\frac{1}{\beta_1}+\frac{1}{n+1}\right\}\leq 
\frac{1}{q_1}+\frac{1}{r_1}<1,\label{parameter4}\\
&\frac{2}{n}<\frac{1}{q_2}+\frac{1}{r_2}\leq 
\min\left\{\frac{1}{\alpha_2}+\frac{2}{n+1},
\frac{1}{\beta_2}+\frac{1}{n+1}\right\}.\label{parameter5}
\end{align}
It is actually possible to choose those parameters. In fact, we put 
\begin{align*}
\alpha_1=\frac{n+1}{n-1-\gamma_1},\quad 
\alpha_2=\frac{n(n+1)}{2+\gamma_2},\quad 
\beta_1=\frac{n+1}{n-\eta_1},\quad \beta_2=\frac{n(n+1)}{n+2+\eta_2}
\end{align*}
with arbitrarily small $\gamma_i,\eta_i\in (0,n-2]$ and 
look for $(q_1,q_2,r_1,r_2)$ of the form 
\begin{align*}
q_1=\frac{n+1}{n-1-\tilde{\gamma}_1},\quad
 q_2=\frac{n(n+1)}{2+\tilde{\gamma}_2},\quad 
r_1=\frac{n+1}{n-\tilde{\eta}_1},\quad 
r_2=\frac{n(n+1)}{n+2+\tilde{\eta}_2}.
\end{align*}
Then the conditions (\ref{parameter2})--(\ref{parameter5}) 
are accomplished by  
\begin{align*}
&n-2<\tilde{\gamma}_1+\tilde{\eta}_1\leq n-2+ \min\{\gamma_1,\eta_1\},\quad
\quad 
n-2<\tilde{\gamma}_2+\tilde{\eta}_2\leq n-2+\min\{\gamma_2,\eta_2\},\\ 
&\gamma_i\leq \tilde{\gamma}_i,\quad\quad \eta_i\leq\tilde{\eta}_i,
\quad i=1,2.
\end{align*}
For each $i=1,2$, the set of $(\tilde{\gamma}_i,\tilde{\eta}_i)$ with those 
conditions is nonvoid for given $\gamma_i$ and $\eta_i$; for instance, 
we may take $\tilde{\gamma}_i=\gamma_i$, 
$\tilde{\eta}_i=n-2$ when $\gamma_i\leq \eta_i$ and 
take $\tilde{\gamma}_i=n-2$, 
$\tilde{\eta}_i=\eta_i$ when $\gamma_i\geq \eta_i$. 
\par To obtain a small solution, we use the contraction mapping principle. 
We define  
\begin{align*}
B:=\{u\in L^{\alpha_1}(D)\cap L^{\alpha_2}(D) \mid 
\nabla u\in L^{\beta_1}(D)\cap L^{\beta_1}(D)\}
\end{align*}
which is a Banach space
endowed with the norm
\begin{align*}
\|u\|_B:=\sum^2_{i=1}(a^\frac{2}{n+1}\|u\|_{\alpha_i}
+a^\frac{1}{n+1}\|\nabla u\|_{\beta_i}).
\end{align*} 
Given $v\in B$, which satisfies  
\begin{align*}
v\cdot \nabla v\in \bigcap_{i=1}^2L^{\kappa_i}(D),\quad\quad\quad 
\frac{1}{\kappa_i}=\frac{1}{q_i}+\frac{1}{r_i},
\quad 1<\kappa_i<\frac{n}{2}
\end{align*}
for $i=1,2,$
we can employ Proposition \ref{stalinear} 
with $f=v\cdot \nabla v$, $q=\kappa_i$ $(i=1,2)$ and $u_*=-ae_1$. 
Then, due to Lemma \ref{identify}, the problem 
(\ref{vnablav}) admits a unique solution $(u,p)$ such that 
\begin{align*}
a^{\frac{2}{n+1}}\|u\|_{\mu_i}&+
a\left\|\frac{\partial u}{\partial x_1}\right\|_{\kappa_i}+
a^{\frac{1}{n+1}}\|\nabla u\|_{\lambda_i}+\|\nabla^2 u\|_{\kappa_i}+
\|\nabla p\|_{\kappa_i}\nonumber\\&\leq 
C'(\|v\cdot \nabla v\|_{\kappa_i}+a)
\leq C'(\|v\|_{q_i}\|\nabla v\|_{r_i}+a)
\leq C'(a^{-\frac{3}{n+1}}\|v\|_B^2+a)
\end{align*}
for $i=1,2.$ Here, 
$1/\lambda_i=1/\kappa_i-1/(n+1),$ $1/\mu_i=1/\kappa_i-2/(n+1)$. 
Furthermore, 
because the conditions (\ref{parameter4}) and (\ref{parameter5}) ensure 
$\mu_1\leq \alpha_1\leq\alpha_2\leq \mu_2$ and 
$\lambda_1\leq \beta_1\leq \beta_2\leq \lambda_2$, 
we find $u\in B$ with 
\begin{align*}
\|u\|_B\leq 4C'(a^{-\frac{3}{n+1}}\|v\|_B^2+a).
\end{align*}
Hence, we assume 
\begin{align}\label{acondi}
a^{\frac{n-2}{n+1}}<\frac{1}{64C'^2}=:\delta
\end{align}
and set 
\begin{align*}
N_a:=\{u\in B\mid \|u\|_B\leq 8C'a\}
\end{align*}
to see that the map 
$\Psi:N_a\ni v\mapsto u\in N_a$ 
is well-defined. Moreover, for $v_i\in N_a~(i=1,2)$, 
set $u_i=\Psi(v_i)$ and let $p_i$ be the  
pressure associated with $u_i$. Then we have
\begin{align*}
\left\{
\begin{array}{r@{}c@{}ll}
\Delta (u_1-u_2)-a\,
\displaystyle\frac{\partial}{\partial x_1}(u_1-u_2)
&{}={}&\nabla(p_1-p_2)+(v_1-v_2)\cdot\nabla v_1+
v_2\cdot\nabla(v_2-v_1),
&\quad x\in D,\\
\nabla\cdot(u_1-u_2)&{}={}&0,\hspace{1cm}x\in D,\\
(u_1-u_2)|_{\partial D}&{}={}&0,\\
u_1-u_2&{}\rightarrow{}& 0\hspace{1.2cm}{\rm{as}}~|x|\rightarrow \infty.
\end{array}\right.
\end{align*}
By applying Proposition \ref{stalinear} again, we find 
\begin{align*}
\|u_1-u_2\|_B\leq
4C'a^{-\frac{3}{n+1}}(\|v_1\|_B+\|v_2\|_B)\|v_1-v_2\|_B
\leq 64C'^2a^{\frac{n-2}{n+1}}\|v_1-v_2\|_B
\end{align*}
and the map $\Psi$ is contractive on
account of (\ref{acondi}). The proof is complete. \qed

\section{Proof of Theorem \ref{attainthm}}
~~~In this section, we prove Theorem \ref{attainthm}. 
We define the operator 
$A_a:L^q_{\sigma}(D)\rightarrow L^{q}_{\sigma}(D)
~(a>0,1<q<\infty)$ by 
\begin{align*}
\D(A_a)=W^{2,q}(D)\cap W^{1,q}_0(D)\cap L^{q}_{\sigma}(D),\quad 
A_au=-P\left[\Delta u-a\frac{\partial u}{\partial x_1}\right].
\end{align*}
Here, $W_0^{1,q}(D)$ denotes 
the completion of $C_0^\infty(D)$ in the Sobolev space $W^{1,q}(D)$. 
It is well known that $-A_a$ generates an analytic $C_0$-semigroup 
$e^{-tA_a}$ called the Oseen semigroup in $L^q_{\sigma}(D)$, 
see Miyakawa \cite[Theorem 4.2]{miyakawa1982}, 
Enomoto and Shibata \cite[Theorem 4.4]{ensh2004}. 
The following $L^q$-$L^r$ estimates of $e^{-tA_a}$, which play an important 
role in the proof of Theorem \ref{attainthm},  
were established by Kobayashi and Shibata \cite{kosh1998} 
in the three-dimensional case 
and further developed by Enomoto and Shibata \cite{ensh2004,ensh2005} 
for $n\geq 3$. We also note that 
$L^q$-$L^r$ estimates in the two-dimensional case were first 
established by Hishida \cite{hishida2016}, and 
recently Maekawa \cite{maekawa2019loc} derived those estimates
uniformly in small $a>0$ as a significant improvement of \cite{hishida2016}.

\begin{prop}[\cite{kosh1998,ensh2004,ensh2005}] \label{proplqlr}
\quad Let $n\geq 3$, $\sigma_0>0$ and assume $|a|\leq \sigma_0$.  
\begin{enumerate}
\item Let $1<q\leq r\leq\infty~(q\ne\infty)$. 
Then we have 
\begin{align}\label{lqlr}
&\|e^{-tA_a}f\|_{r}\leq Ct^{-\frac{n}{2}(\frac{1}{q}-\frac{1}{r})}\|f\|_q
\end{align}
for $t>0$ and $f\in L^{q}_\sigma(D)$, where $C=C(n,\sigma_0,q,r,D)>0$ 
is independent of $a$. 
\item Let $1<q\leq r\leq n$. Then we have 
\begin{align}\label{lqlrgrad}
&\|\nabla e^{-tA_a}f\|_{r}\leq Ct^{-\frac{n}{2}
(\frac{1}{q}-\frac{1}{r})-\frac{1}{2}}\|f\|_q
\end{align}
for $t>0$ and $f\in L^{q}_\sigma(D)$, where $C=C(n,\sigma_0,q,r,D)>0$ 
is independent of $a$. 
\item Let $n/(n-1)\leq q\leq r\leq \infty~(q\ne\infty)$. Then we have
\begin{align}\label{dual}
\|e^{-tA_a}P\nabla \cdot F\|_r\leq
Ct^{-\frac{n}{2}(\frac{1}{q}-\frac{1}{r})-\frac{1}{2}}\|F\|_q
\end{align}
for $t>0$ and $F\in L^q(D)$, where $C=C(n,\sigma_0,q,r,D)>0$ 
is independent of $a$. 
\end{enumerate}
\end{prop}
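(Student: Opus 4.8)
The plan is to derive (\ref{lqlr})--(\ref{dual}) by the standard combination of explicit whole-space bounds with local energy decay for the exterior problem, in the spirit of Iwashita's argument for the Stokes semigroup adapted to the Oseen operator as in \cite{kosh1998,ensh2004,ensh2005}. I would first dispose of the model problem on $\R^n$. Let $A_a^0=-P_0(\Delta-a\,\partial_{x_1})$, where $P_0$ is the Helmholtz projection on $\R^n$, a Calder\'on--Zygmund operator and hence bounded on $L^q(\R^n)$ for $1<q<\infty$. The semigroup $e^{-tA_a^0}$ acts by convolution of $P_0f$ against the translated Gauss kernel $G_a(x,t)=(4\pi t)^{-n/2}\exp(-|x-at e_1|^2/4t)$, whose spatial derivatives gain a factor $t^{-1/2}$ and which keeps full Gaussian decay for every $|a|\le\sigma_0$, since $a$ enters only through a translation of the profile. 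Young's inequality---the translation-invariant case of Lemma \ref{young}---then yields the analogues of (\ref{lqlr})--(\ref{dual}) on $\R^n$ with constants uniform in $a$.

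The heart of the matter is the local energy decay for $e^{-tA_a}$ on $D$. For small $t$ the smoothing follows from the analyticity of the Oseen semigroup recalled above together with local parabolic regularity, so the substance is the decay as $t\to\infty$, which I would extract from the resolvent $(\lambda+A_a)^{-1}$ near $\lambda=0$. I would build a parametrix by gluing the whole-space resolvent $(\lambda+A_a^0)^{-1}$, cut off away from $\partial D$, to the resolvent of an auxiliary problem on a bounded domain surrounding $\O$, cut off near $\partial D$; the divergence produced by the cut-offs is corrected with the Bogovski\u{\i} operator $\B$ (cf. \cite{bogovskii1979}), just as in the proof of Lemma \ref{identify}. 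The commutator remainders are supported in a fixed compact annulus, so their contribution is governed by a compact operator, and a Fredholm argument reduces invertibility near $\lambda=0$ to uniqueness, which holds. Reading off the resulting (H\"older) continuity of $\lambda\mapsto(\lambda+A_a)^{-1}Pf$ up to $\lambda=0$ on compactly supported data and inserting it into the inverse-Laplace representation of $e^{-tA_a}Pf$ with a deformed contour converts this resolvent regularity into decay of $\|e^{-tA_a}f\|_{L^q(D\cap B_R)}$ for compactly supported $f$.

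It then remains to patch the two ingredients. Writing $e^{-tA_a}f$ as the cut-off whole-space evolution of an extension of $f$ plus a locally supported contribution, with the commutator and Bogovski\u{\i} terms fed back through Duhamel's formula, a bootstrap in the time variable upgrades the local decay to the global bound (\ref{lqlr}). Adding one spatial derivative throughout the same scheme gives (\ref{lqlrgrad}), where the restriction $r\le n$ reflects the familiar limitation of gradient estimates for the exterior problem, stemming from the local part rather than from the whole-space profile. Finally, (\ref{dual}) follows by duality from (\ref{lqlrgrad}): since the formal adjoint of $A_a$ is $A_{-a}$, one has $\langle e^{-tA_a}P\nabla\cdot F,g\rangle=-\langle F,\nabla e^{-tA_{-a}}g\rangle$, and the assumption $n/(n-1)\le q$ is exactly dual to the admissible range $q'\le n$ for the gradient estimate applied to $e^{-tA_{-a}}$.

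The step I expect to be the genuine obstacle is the uniform-in-$a$ low-frequency resolvent analysis: one must show that the parametrix near $\lambda=0$ is invertible and that the remainder operators, the contour deformation, and the Fredholm alternative all survive with bounds independent of $a\in[-\sigma_0,\sigma_0]$. This is delicate near the Stokes limit $a=0$, where the structure of the whole-space resolvent and the associated decay mechanism change qualitatively, so that controlling the passage $a\to0$ uniformly is the technically demanding point.
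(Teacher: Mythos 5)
You should first be aware that the paper itself does not prove assertions 1 and 2 at all: they are imported verbatim from \cite{kosh1998} and \cite{ensh2004,ensh2005}, and the only proof content the paper offers for this proposition is the single remark that assertion 3 ``is simply based on duality argument together with semigroup property especially for the case $r=\infty$.'' Your reconstruction of assertions 1 and 2 --- the whole-space Oseen kernel $(4\pi t)^{-n/2}e^{-|x-ate_1|^2/4t}$ with Young's inequality, a low-frequency resolvent parametrix glued by cut-offs with Bogovski\u{\i} correction, local energy decay, and a Duhamel bootstrap to globalize --- is a faithful outline of the strategy actually used in those references, and you correctly single out the uniformity of the low-frequency analysis in $a\in[-\sigma_0,\sigma_0]$ (in particular near $a=0$) as the technical crux; that uniformity is precisely the contribution of \cite{ensh2004,ensh2005} beyond \cite{kosh1998}. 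As an outline it is sound, but it compresses two long papers into a paragraph, and none of it is reproduced in the paper under review: its ``own proof'' of parts 1 and 2 is citation.

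The one point where your argument, taken as written, genuinely fails is assertion 3 with $r=\infty$. The duality identity $\langle e^{-tA_a}P\nabla\cdot F,g\rangle=-\langle F,\nabla e^{-tA_{-a}}g\rangle$ requires estimating $\|\nabla e^{-tA_{-a}}g\|_{q'}$ for $g\in L^{r'}_\sigma(D)$ by (\ref{lqlrgrad}), i.e.\ applying part 2 with exponents $(r',q')$, which demands $1<r'\leq q'\leq n$. The constraint $q'\leq n$ is indeed dual to $q\geq n/(n-1)$, as you say, but the constraint $r'>1$ excludes exactly the case $r=\infty$, where $r'=1$ and the semigroup theory is not even available on $L^1_\sigma(D)$. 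So duality alone cannot produce the $L^q$--$L^\infty$ bound, and this is what the paper's phrase ``together with semigroup property especially for the case $r=\infty$'' refers to: one first obtains (\ref{dual}) for all finite $r$ by your duality argument, then writes $e^{-tA_a}P\nabla\cdot F=e^{-\frac{t}{2}A_a}\bigl[e^{-\frac{t}{2}A_a}P\nabla\cdot F\bigr]$, applies the finite-$r$ case (say with $r=q$) to the inner factor and the $L^q$--$L^\infty$ estimate (\ref{lqlr}) to the outer factor, so that the rates combine to $t^{-\frac{1}{2}}\cdot t^{-\frac{n}{2q}}=t^{-\frac{n}{2}(\frac{1}{q}-\frac{1}{\infty})-\frac{1}{2}}$ as required. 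With that two-step argument added, your treatment of assertion 3 coincides with the paper's.
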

The proof of the assertion 3 is simply based on duality argument 
together with semigroup property especially for the case $r=\infty$. 
\par We also prepare the following lemma, which plays a role to 
prove the uniqueness within $Y$ defined by (\ref{yr}). 
\begin{lem}
Let $n\geq 3$ and $a>0$.  
For each precompact set $K\subset L^n_\sigma(D)$, we have 
\begin{align}\label{et0k}
\lim_{t\rightarrow 0}\sup_{f\in K}
t^{\frac{1}{2}}\big(\|e^{-t A_a}f\|_\infty+
\|\nabla e^{-t A_a}f\|_n\big)=0.
\end{align}
\end{lem}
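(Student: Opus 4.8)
The plan is to combine the endpoint estimates of Proposition \ref{proplqlr} with a density argument, the one nontrivial ingredient being a gain of integrability that upgrades the borderline gradient estimate. First I would abbreviate $\Phi(t,f):=t^{1/2}\big(\|e^{-tA_a}f\|_\infty+\|\nabla e^{-tA_a}f\|_n\big)$. Choosing $q=n,\ r=\infty$ in $(\ref{lqlr})$ and $q=r=n$ in $(\ref{lqlrgrad})$ yields the uniform bound $\Phi(t,f)\le C\|f\|_n$ for all $t>0$, $f\in L^n_\sigma(D)$, with $C$ independent of $t$ and $f$. Since $e^{-tA_a}$ is linear and each summand of $\Phi$ is a norm, $\Phi(t,\cdot)$ is subadditive, giving the $t$-uniform Lipschitz bound $|\Phi(t,f)-\Phi(t,g)|\le\Phi(t,f-g)\le C\|f-g\|_n$. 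This is exactly what lets precompactness reduce the claim to a dense subset: by a finite-net argument, for $\varepsilon>0$ one covers $K$ by finitely many $L^n$-balls of radius $\varepsilon/(3C)$ and approximates their centres to within $\varepsilon/(3C)$ by elements $g_1,\dots,g_N\in C^\infty_{0,\sigma}(D)$ (dense by definition of $L^n_\sigma(D)$), so that every $f\in K$ obeys $\Phi(t,f)\le\tfrac23\varepsilon+\max_j\Phi(t,g_j)$. It then suffices to show $\Phi(t,g)\to0$ as $t\to0$ for each fixed $g\in C^\infty_{0,\sigma}(D)$.

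For such a $g$ the $L^\infty$ part is immediate: $g\in L^r(D)$ for every $r\in(n,\infty)$, so $(\ref{lqlr})$ gives $t^{1/2}\|e^{-tA_a}g\|_\infty\le Ct^{\frac12-\frac{n}{2r}}\|g\|_r\to0$ since $\tfrac{n}{2r}<\tfrac12$. The crux is the gradient term, because the borderline choice $q=r=n$ in $(\ref{lqlrgrad})$ produces exactly $t^{-1/2}$ and hence no decay of $\Phi$; the required $o(1)$ must come from extra integrability of $A_ag$. Here I would use that $g\in C^\infty_{0,\sigma}(D)\subset\D(A_a)$ with $A_ag=-P[\Delta g-a\,\partial g/\partial x_1]\in L^q_\sigma(D)$ for every $1<q<\infty$, as $\Delta g-a\,\partial g/\partial x_1\in C_0^\infty(D)^n$ and $P$ is bounded on $L^q(D)$. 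Fixing $q\in(n/2,n)$ (nonempty since $n\ge3$) and using $\tfrac{d}{ds}e^{-sA_a}g=-e^{-sA_a}A_ag$, one has the $L^n$-identity $e^{-tA_a}g-g=-\int_0^t e^{-sA_a}A_ag\,ds$; the integrand lies in $W^{1,n}(D)$ with
\begin{align*}
\int_0^t\|\nabla e^{-sA_a}A_ag\|_n\,ds
\le C\|A_ag\|_q\int_0^t s^{-\frac{n}{2q}}\,ds
=\frac{C}{1-\frac{n}{2q}}\,t^{\,1-\frac{n}{2q}}\|A_ag\|_q<\infty,
\end{align*}
by $(\ref{lqlrgrad})$ with source exponent $q$ and target $n$, the convergence near $s=0$ being precisely what the constraint $q>n/2$ secures. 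Closedness of $\nabla$ then permits $\nabla e^{-tA_a}g-\nabla g=-\int_0^t\nabla e^{-sA_a}A_ag\,ds$, whence $\|\nabla e^{-tA_a}g\|_n\le\|\nabla g\|_n+Ct^{\,1-\frac{n}{2q}}\|A_ag\|_q$ stays bounded as $t\to0$ and $t^{1/2}\|\nabla e^{-tA_a}g\|_n\to0$. Combined with the $L^\infty$ estimate this gives $\Phi(t,g)\to0$, completing the reduction and hence $(\ref{et0k})$.

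I expect the only delicate point to be the justification of this Duhamel representation for $\nabla e^{-tA_a}g$, i.e.\ the interchange of $\nabla$ with the $L^n$-valued Bochner integral, which rests on the closedness of the gradient together with the integrability just displayed (and the trivial bound $\int_0^t\|e^{-sA_a}A_ag\|_n\,ds\le Ct\|A_ag\|_n$ from $(\ref{lqlr})$). Everything else is a routine application of Proposition \ref{proplqlr} and of the uniform Lipschitz/precompactness reduction.
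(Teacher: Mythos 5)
Your proposal is correct and takes essentially the same route as the paper: a uniform bound $t^{1/2}\big(\|e^{-tA_a}f\|_\infty+\|\nabla e^{-tA_a}f\|_n\big)\le C\|f\|_n$ from Proposition \ref{proplqlr}, a finite $\varepsilon$-net over the precompact set $K$, and a reduction to the pointwise limit on the dense class $C^\infty_{0,\sigma}(D)$ --- precisely the structure of the paper's proof, which first asserts (\ref{et0}) for each fixed $f\in L^n_\sigma(D)$ by density and then runs the identical net argument. The only difference is one of detail: where the paper disposes of the dense-class limit in a single sentence, you spell it out, in particular treating the borderline gradient term correctly via the Duhamel identity and $A_ag\in L^q_\sigma(D)$ with $q\in(n/2,n)$.
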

\begin{proof}
\par By applying Proposition \ref{proplqlr} and 
approximating $f\in L^n_{\sigma}(D)$ 
by a sequence in $C^\infty_{0,\sigma}(D)$, 
we have  
\begin{align}\label{et0}
\lim_{t\rightarrow 0}t^{\frac{1}{2}}\big(\|e^{-t A_a}f\|_\infty+
\|\nabla e^{-t A_a}f\|_n\big)=0
\end{align}
for all $f\in L^n_{\sigma}(D)$. 
Given $\eta>0$, let $f_1,\cdots,f_m\in K$ fulfill 
$K\subset\displaystyle\bigcup_{j=1}^mB(f_j;\eta),$ where 
$B(f_j;\eta):=\{g\in  L^n_{\sigma}(D)\mid\|g-f_j\|_{n}<\eta\}$. 
For each $f\in K$, we choose $f_i\in K$ such that $f\in B(f_i;\eta)$. 
Then it follows from (\ref{lqlr}) that   
\begin{align*}
t^{\frac{1}{2}}\|e^{-tA_a}f\|_{\infty}&
\leq t^{\frac{1}{2}}\|e^{-tA_a}f_i\|_{\infty}
+t^{\frac{1}{2}}\|e^{-tA_a}(f-f_i)\|_{\infty}\\
& \leq t^{\frac{1}{2}}\|e^{-tA_a}f_i\|_{\infty}+C\|f-f_i\|_{n}
\leq\sum_{j=1}^mt^{\frac{1}{2}}
\|e^{-tA_a}f_j\|_{\infty}+C\eta.
\end{align*}
Since the right-hand side is independent of $f\in K$ and since  
$\eta$ is arbitrary, (\ref{et0}) 
yields 
\begin{align*}
\lim_{t\rightarrow 0}\sup_{f\in K}
t^{\frac{1}{2}}\|e^{-tA_a}f\|_{\infty}=0.
\end{align*}
We can discuss the $L^n$ norm of the first derivative similarly 
and thus conclude (\ref{et0k}). 
\end{proof}

We recall a function space $Y_{0}$  
defined by (\ref{yr0}), which is a 
Banach space equipped with 
norm $\|\cdot\|_{Y}=\|\cdot\|_{Y,\infty}$, where 
\begin{align*}
&\|v\|_{Y,t}:=[v]_{n,t}+[v]_{\infty,t}+[\nabla v]_{n,t},\\
&[v]_{q,t}:=
\sup_{0<\tau<t}\tau^{\frac{1}{2}-\frac{n}{2q}}\|v(\tau)\|_{q},\quad
q=n,\infty;\quad \quad  
[\nabla v]_{n,t}:=\sup_{0<\tau<t}\tau^{\frac{1}{2}}\|\nabla v(\tau)\|_{n}
\end{align*}
for $t\in (0,\infty]$. Construction of the solution is based on 
the following.
\begin{lem}\label{key}
Suppose $0<a^{(n-2)/(n+1)}<\delta$, where $\delta$ is a constant in 
Theorem \ref{rmkrho} with $(\ref{staclass2})$--$(\ref{rho2}).$ 
Let $\psi$ be a function on $\R$ satisfying $(\ref{psidef})$ 
and set $M=\displaystyle\max_{t\in\R}|\psi'(t)|$. 
Suppose that $u_s$ is the stationary solution obtained 
in Theorem \ref{rmkrho}.
For $u,v\in Y_{0}$, we set 
\begin{align*}
&G_1(u,v)(t)=\int_0^t e^{-(t-\tau)A_a}
P[u\cdot\nabla v](\tau)\,d\tau,\quad
G_2(v)(t)=\int_0^t 
e^{-(t-\tau)A_a}P[\psi(\tau)v\cdot\nabla u_s]\,d\tau,\\
&G_3(v)(t)=\int_0^t 
e^{-(t-\tau)A_a}P[\psi(\tau)u_s\cdot\nabla v]\,d\tau,\\
&G_4(v)(t)=\int_0^t 
e^{-(t-\tau)A_a}P\left[(1-\psi(\tau))a
\frac{\partial v}{\partial x_1}(\tau)\right]\,d\tau,\\
&H_1(t)=\int_0^t
e^{-(t-\tau)A_a}Ph_1(\tau)\,d\tau,\quad 
H_2(t)=\int_0^t
e^{-(t-\tau)A_a}Ph_2(\tau)\,d\tau,
\end{align*}
where $h_1$ and $h_2$ are defined 
by $(\ref{h1})$ and $(\ref{h2})$, respectively. 
Then we have $G_1(u,v),G_i(v),H_j$ $\in Y_{0}$ $(i=2,3,4,j=1,2)$ along with
\begin{align}
&\|G_1(u,v)\|_{Y,t}\leq C[u]^\frac{1}{2}_{n,t}[u]^\frac{1}{2}_{\infty,t}
[\nabla v]_{n,t},\label{g1est}\\ 
&\|G_2(v)\|_{Y,t}\leq C\big(\|\nabla u_s\|_{\frac{n}{2+\rho_3}}
+\|\nabla u_s\|_{\frac{n}{2}}+
\|\nabla u_s\|_{\frac{n}{2-\rho_4}}\big)[v]_{\infty,t},\label{g2est}\\
&\|G_3(v)\|_{Y,t}\leq C\big(\|u_s\|_{\frac{n}{1+\rho_1}}+\|u_s\|_{n}+
\|u_s\|_{\frac{n}{1-\rho_2}}\big)[\nabla v]_{n,t},\label{g3est}\\
&\|G_4(v)\|_{Y,t}\leq Ca[\nabla v]_{n,t},\label{g4est}\\
&\|H_1\|_{Y,t}\leq CM\|u_s\|_n,\label{h1est}\\
&\|H_2\|_{Y,t}\leq C\big(\|u_s\|_{\frac{n}{1-\rho_2}}
\|\nabla u_s\|_{\frac{n}{2-\rho_4}}+a
\|\nabla u_s\|_{\frac{n}{2-\rho_4}}\big)\label{h2est}
\end{align}
for all $t\in (0,\infty]$ and 
\begin{align}\label{h0}
\lim_{t\rightarrow 0}\|H_j(t)\|_{Y,t}=0
\end{align}
for $j=1,2$. 
Here, $C$ is a positive constant independent of $u,v,\psi,a$ and $t$.
\end{lem}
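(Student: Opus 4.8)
The plan is to estimate every operator through its Duhamel integral, feeding the three families of $L^q$--$L^r$ bounds of Proposition~\ref{proplqlr} into the $\tau$-integral, controlling each integrand by Hölder's inequality together with the summability of $u_s$ from Theorem~\ref{rmkrho}, and recognizing the resulting time integrals as Beta functions. Throughout I will use, for $v\in Y_0$, the pointwise bounds $\|v(\tau)\|_n\le[v]_{n,t}$, $\|v(\tau)\|_\infty\le\tau^{-1/2}[v]_{\infty,t}$, $\|\nabla v(\tau)\|_n\le\tau^{-1/2}[\nabla v]_{n,t}$, and I will split every output norm into its three constituents $[\,\cdot\,]_{n,t}$, $[\,\cdot\,]_{\infty,t}$, $[\nabla\,\cdot\,]_{n,t}$. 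The reason the norm $\|\cdot\|_{Y,t}$ is recorded on the half-open interval $(0,\infty]$ is precisely that each Beta factor must emerge bounded uniformly in $t$: the power of $t$ produced by $\int_0^t(t-\tau)^{-a}\tau^{-b}d\tau=B(1-a,1-b)\,t^{1-a-b}$ must be nonpositive after multiplication by the weight $\tau^{1/2-n/2q}$.

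For the quadratic term $G_1$ I would keep $u\cdot\nabla v$ and use $\|u\cdot\nabla v\|_{2n/3}\le\|u\|_{2n}\|\nabla v\|_n$ with $\|u\|_{2n}\le\|u\|_n^{1/2}\|u\|_\infty^{1/2}$, so that the forcing decays like $\tau^{-3/4}$; applying (\ref{lqlr}) for the $L^n$ and $L^\infty$ pieces and (\ref{lqlrgrad}) for the gradient piece yields in each case a convergent integral $\int_0^t(t-\tau)^{-a}\tau^{-3/4}d\tau$ with $a\in\{1/4,3/4\}$, producing exactly the factor $[u]_{n,t}^{1/2}[u]_{\infty,t}^{1/2}[\nabla v]_{n,t}$ of (\ref{g1est}) with a $t$-independent constant. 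The linear term $G_4$ and the inhomogeneous terms $H_1,H_2$ are rendered harmless by the compact support in $[0,1]$ of $1-\psi$, $\psi'$ and $\psi(1-\psi)$: I would bound $\|h_1\|_n\le M\|u_s\|_n$ and $\|u_s\cdot\nabla u_s\|_q\le\|u_s\|_{n/(1-\rho_2)}\|\nabla u_s\|_{n/(2-\rho_4)}$, noting that (\ref{rho2}) is exactly what pushes the Hölder exponent $q$ above $n/2$ and thereby makes the $L^\infty$- and gradient-targets in (\ref{h2est}) admissible, and then use that the integration runs effectively over $[0,\min(t,1)]$, which rules out any large-$t$ growth. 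The limit (\ref{h0}) follows as in the argument for (\ref{et0k}): for small $t$ the integral over $[0,t]$ together with the weight tends to $0$, e.g. $\tau^{1/2}\|H_1(\tau)\|_\infty\le 2M\tau\|u_s\|_n\to0$.

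The genuinely delicate operators are $G_2$ and $G_3$, which couple the decaying perturbation to the stationary $u_s$. Here only one factor carries time decay, so the best forcing estimate, $\|v\cdot\nabla u_s\|_q\le\tau^{-1/2}[v]_{\infty,t}\|\nabla u_s\|_q$ and symmetrically $\|u_s\cdot\nabla v\|_q\le\tau^{-1/2}\|u_s\|_{q'}[\nabla v]_{n,t}$, decays only like $\tau^{-1/2}$. For the unweighted piece $[G_2]_{n,t}$ this is harmless: the choice $q=n/2$ gives kernel exponent $1/2$ and the critical balance $\int_0^t(t-\tau)^{-1/2}\tau^{-1/2}d\tau=B(1/2,1/2)$, independent of $t$, which is why $\|\nabla u_s\|_{n/2}$ appears in (\ref{g2est}) and $\|u_s\|_n$ in (\ref{g3est}). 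For the weighted pieces I would split $\int_0^t=\int_0^{t/2}+\int_{t/2}^t$. On the far part the factor $(t-\tau)\ge t/2$ supplies the large-$t$ decay and licenses the large-exponent norms $\beta_1=n/(2+\rho_3)$ and $\alpha_1=n/(1+\rho_1)$ (even though their kernel exponents exceed $1$); equivalently one may write $G_2(t)=e^{-(t/2)A_a}G_2(t/2)+\int_{t/2}^t\!\big(\dots\big)\,d\tau$ and recover the weighted norms from the $L^n$ bound via the smoothing in (\ref{lqlr})--(\ref{lqlrgrad}).

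The near part $\int_{t/2}^t$, handled with the small-singularity exponents $\beta_2=n/(2-\rho_4)$ and $\alpha_2=n/(1-\rho_2)$ whose purpose is to keep the kernel exponent below $1$, is the main obstacle. Because the forcing decays only like $\tau^{-1/2}$, the estimate $\int_{t/2}^t(t-\tau)^{-a}\tau^{-1/2}d\tau$ sits exactly on the borderline $a+\tfrac12=\tfrac32$ that separates a uniform bound from growth in $t$, so closing the weighted $L^\infty$ and $\nabla$-$L^n$ estimates for $G_2,G_3$ is the crux of the lemma. I expect the parameter window (\ref{rho1})--(\ref{rho2}), and in particular $\rho_2+\rho_4>1$, to be calibrated precisely so that this borderline contribution, summed over the three constituents of the $Y$-norm, comes out with a $t$-independent constant; should the plain $L^q$ estimates leave a borderline remainder, my fallback would be to exploit the $L^{n,\infty}$-type summability available for $u_s$ to gain the missing integrability.
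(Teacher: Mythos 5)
Your handling of $G_1$, $G_4$, $H_1$, $H_2$ and of the unweighted components $[G_2]_{n,t}$, $[G_3]_{n,t}$ coincides with the paper's proof, and you correctly locate the role of (\ref{rho2}) in $H_2$ (it pushes the H\"older exponent of $u_s\cdot\nabla u_s$ above $n/2$, making the kernel in the gradient estimate integrable). The genuine gap is exactly at the point you yourself flag as ``the crux'': the weighted components $[G_2]_{\infty,t}$, $[\nabla G_2]_{n,t}$ (and likewise for $G_3$) for large $t$, and your proposal does not close it. With your two-piece splitting, no choice of exponent works on the near interval $[t/2,t]$: the choice $n/(2-\rho_4)$ gives
\[
\int_{t/2}^t(t-\tau)^{-1+\frac{\rho_4}{2}}\tau^{-\frac12}\,d\tau\;\simeq\; t^{-\frac12+\frac{\rho_4}{2}},
\]
hence $t^{\rho_4/2}\to\infty$ after multiplication by the weight $t^{1/2}$; the choice $n/2$ gives kernel exponent $1$ and a logarithmic divergence $\log t$; and the choice $n/(2+\rho_3)$ gives kernel exponent $1+\rho_3/2>1$, which is not integrable at $\tau=t$. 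Moreover, no calibration of $(\rho_1,\dots,\rho_4)$ repairs this, since $\rho_4>0$ always produces the growth above; condition (\ref{rho2}) plays no role in $G_2,G_3$. Your fallback via $L^{n,\infty}$-type summability also points the wrong way: weak-$L^n$ information is \emph{weaker} than what you already have, whereas gaining integrability in time would require a Lorentz norm with second index $1$, which Theorem \ref{rmkrho} does not provide.

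The missing idea, which is how the paper proceeds (following Chen and Enomoto--Shibata), is a \emph{three}-piece splitting for $t\geq 2$: $\int_0^t=\int_0^{t/2}+\int_{t/2}^{t-1}+\int_{t-1}^t$. On the middle interval one uses the \emph{stronger} norm $\|\nabla u_s\|_{n/(2+\rho_3)}$: there $t-\tau\geq 1$, so the supercritical kernel exponent $1+\rho_3/2>1$ is harmless and is precisely what makes the tail converge,
\[
\int_{t/2}^{t-1}(t-\tau)^{-1-\frac{\rho_3}{2}}\tau^{-\frac12}\,d\tau
\;\leq\; \Big(\tfrac{t}{2}\Big)^{-\frac12}\int_1^\infty s^{-1-\frac{\rho_3}{2}}\,ds
\;\leq\; C\,t^{-\frac12}.
\]
The \emph{weaker} norm $\|\nabla u_s\|_{n/(2-\rho_4)}$ is used only on the unit interval $[t-1,t]$, where its integrable singularity contributes the constant $2/\rho_4$ and $\tau^{-1/2}\leq(t-1)^{-1/2}\leq Ct^{-1/2}$. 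On $[0,t/2]$ the norm $\|\nabla u_s\|_{n/2}$ with kernel $(t-\tau)^{-1}\leq Ct^{-1}$ suffices (your semigroup-factorization remark is an acceptable substitute for this far part only). This allocation is the reason all three norms $\|\nabla u_s\|_{n/(2+\rho_3)}$, $\|\nabla u_s\|_{n/2}$, $\|\nabla u_s\|_{n/(2-\rho_4)}$ appear in (\ref{g2est}); without the intermediate interval your argument cannot produce a $t$-uniform bound, so as written the proof of (\ref{g2est}) and (\ref{g3est}) — and hence of the lemma — is incomplete.
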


\begin{proof}
The continuity of those functions in $t$ is deduced by use of properties of 
analytic semigroups together with Proposition \ref{lqlr} in the same way 
as in Fujita and Kato \cite{fuka1964}. 
Since $L^\infty$ estimate is always the same as $L^n$ estimate of 
the first derivative, the estimate of $[\cdot]_{\infty,t}$ may be omitted. 
Although $(\ref{g1est})$--$(\ref{g3est})$ are discussed 
in Enomoto and Shibata \cite[Lemma 3.1.]{ensh2005} 
we briefly give the proof for completeness. 
We find that $u\in Y_0$ satisfies $u(t)\in L^{2n}(D)$ and  
\begin{align*}
\|u(t)\|_{2n}\leq 
t^{-\frac{1}{4}}[u]^{\frac{1}{2}}_{n,t}[u]^{\frac{1}{2}}_{\infty,t}
\end{align*}
for all $t>0$, which together with 
Proposition \ref{proplqlr} implies 
\begin{align*}
\int_0^t \|e^{-(t-\tau)A_a}P[u\cdot\nabla v](\tau)\|_n\,d\tau
\leq C\int_0^t(t-\tau)^{-\frac{1}{4}}\|u(\tau)\|_{2n}
\|\nabla v(\tau)\|_n\,d\tau
\leq C[u]^{\frac{1}{2}}_{n,t}[u]^{\frac{1}{2}}_{\infty,t}[\nabla v]_{n,t}
\end{align*}
and 
\begin{align*}
\int_0^t \|\nabla e^{-(t-\tau)A_a}P[u\cdot\nabla v](\tau)\|_n\,d\tau
&\leq C\int_0^t(t-\tau)^{-\frac{3}{4}}\|u(\tau)\|_{2n}
\|\nabla v(\tau)\|_n\,d\tau\\&\leq Ct^{-\frac{1}{2}}
[u]^{\frac{1}{2}}_{n,t}[u]^{\frac{1}{2}}_{\infty,t}[\nabla v]_{n,t}.
\end{align*}
We thus conclude (\ref{g1est}). 
It follows from Proposition \ref{proplqlr} that 
\begin{align}
&\int_0^t \|e^{-(t-\tau)A_a}P[\psi(\tau)v\cdot\nabla u_s]\|_n\,d\tau
\leq C\int_0^t(t-\tau)^{-\frac{1}{2}}\|v(\tau)\|_\infty
\|\nabla u_s\|_{\frac{n}{2}}\,d\tau
\leq C[v]_{\infty,t}\|\nabla u_s\|_{\frac{n}{2}}\label{g21}
\end{align}
and that 
\begin{align}\label{g23}
\int_0^t \|\nabla 
e^{-(t-\tau)A_a}P[\psi(\tau)v\cdot\nabla u_s]\|_n\,d\tau
&\leq C\int_0^t(t-\tau)^{-1+\frac{\rho_4}{2}}\|v(\tau)\|_\infty
\|\nabla u_s\|_{\frac{n}{2-\rho_4}}\,d\tau\nonumber
\\&\leq Ct^{-\frac{1}{2}+\frac{\rho_4}{2}}
[v]_{\infty,t}\|\nabla u_s\|_{\frac{n}{2-\rho_4}}
\end{align}
for $t>0$. Furthermore, for $t\geq 2$, we split the integral into 
\begin{align}
\int_0^t \|\nabla 
e^{-(t-\tau)A_a}P[\psi(\tau)v\cdot\nabla u_s]\|_n\,d\tau
=\int_0^{\frac{t}{2}}+\int_{\frac{t}{2}}^{t-1}+\int_{t-1}^t
\end{align}
as in \cite{chen1993} and \cite{ensh2005}. 
By applying (\ref{lqlrgrad}), we have 
\begin{align}
&\int_0^{\frac{t}{2}}\leq C\int_0^{\frac{t}{2}}(t-\tau)^{-1}
\|v(\tau)\|_\infty\|\nabla u_s\|_{\frac{n}{2}}\,d\tau\leq Ct^{-\frac{1}{2}}
[v]_{\infty,t}\|\nabla u_s\|_{\frac{n}{2}},\\
&\int_{\frac{t}{2}}^{t-1}\leq C\int_{\frac{t}{2}}^{t-1}
(t-\tau)^{-1-\frac{\rho_3}{2}}\|v(\tau)\|_\infty
\|\nabla u_s\|_{\frac{n}{2+\rho_3}}\,d\tau\leq 
Ct^{-\frac{1}{2}}[v]_{\infty,t}\|\nabla u_s\|_{\frac{n}{2+\rho_3}},\\
&\int_{t-1}^t\leq C\int_{t-1}^t(t-\tau)^{-1+\frac{\rho_4}{2}}
\|v(\tau)\|_\infty\|\nabla u_s\|_{\frac{n}{2-\rho_4}}\,d\tau\leq 
Ct^{-\frac{1}{2}}[v]_{\infty,t}\|\nabla u_s\|_{\frac{n}{2-\rho_4}}.
\label{g28}
\end{align}
Combining (\ref{g21})--(\ref{g28}) yields (\ref{g2est}). 
By the same manner, we obtain (\ref{g3est}). 
We use Proposition \ref{proplqlr} to find 
\begin{align*}
\int_0^t\left\|\nabla ^ke^{-(t-\tau)A_a}P\left[\big(1-\psi(\tau)\big)a
\frac{\partial v}{\partial x_1}\right]\right\|_nd\tau
&\leq Ca\int_0^{\min\{1,t\}}(t-\tau)^{-\frac{k}{2}}
\|\nabla v(\tau)\|_n\,d\tau\nonumber\\ 
&\leq Ca[\nabla v]_{n,t}
\int_0^{\min\{1,t\}}(t-\tau)^{-\frac{k}{2}}
\tau^{-\frac{1}{2}}\,d\tau
\end{align*}
for $k=0,1$, which lead us to (\ref{g4est}). 
We see (\ref{h1est}) from 
\begin{align}\label{h1est2}
\int_0^t\left\|\nabla^k e^{-(t-\tau)A_a}
P[\psi'(\tau)u_s]\right\|_nd\tau\leq CM\|u_s\|_{n}
\int_0^{\min\{1,t\}}(t-\tau)^{-\frac{k}{2}}\,d\tau
\end{align}
for $k=0,1$ and (\ref{h2est}) from 
\begin{align}\label{h2est2}
&\int_0^t\left\|\nabla^ke^{-(t-\tau)A_a}
P\left[\psi(\tau)(1-\psi(\tau))
\Big(u_s\cdot\nabla u_s
+a\frac{\partial u_s}{\partial x_1}\Big)\right]\right\|_n\,d\tau
\nonumber\\&\hspace{2cm}\leq 
C\|u_s\|_{\frac{n}{1-\rho_2}}\|\nabla u_s\|_{\frac{n}{2-\rho_4}}
\int_0^{\min\{1,t\}}(t-\tau)^{\frac{\rho_2+\rho_4}{2}-1-\frac{k}{2}}\,d\tau
\nonumber\\&\hspace{4cm}+Ca\|\nabla u_s\|_{\frac{n}{2-\rho_4}}
\int_0^{\min\{1,t\}}(t-\tau)^{-\frac{1}{2}+\frac{\rho_4}{2}-\frac{k}{2}}
\,d\tau
\end{align}
for $k=0,1$, where the condition (\ref{rho2}) is used. 
The behavior of $G_1(u,v)(t)$ and $G_i(v)(t)$ as well as the one of 
$H_j(t)$, see (\ref{h0}), as $t\rightarrow 0$ 
follows from (\ref{g1est})--(\ref{g4est}) 
and (\ref{h1est2})--(\ref{h2est2}) 
with $t<1$, so that $G_1(u,v),G_i(v),H_j\in Y_0$ 
and $\|G_1(u,v)(t)\|_n+\|G_i(v)(t)\|_n+\|H_j(t)\|_n\rightarrow 0$ 
as $t\rightarrow 0$. The proof is complete. 
\end{proof}

Let us construct a solution of (\ref{NS4}) by applying Lemma \ref{key}. 
\begin{prop}\label{yr0existence}
Let $\delta$ be the constant in 
Theorem \ref{rmkrho} with $(\ref{staclass2})$--\,$(\ref{rho2})$. 
Let $\psi$ be a function on $\R$ satisfying $(\ref{psidef})$ 
and set $M=\displaystyle\max_{t\in\R}|\psi'(t)|$.
Then there exists a constant $\varepsilon=\varepsilon(n,D)\in(0,\delta]$ such that 
if\, $0<(M+1)a^{(n-2)/(n+1)}<\varepsilon$, 
$(\ref{NS4})$ admits a solution $v\in Y_{0}$ with 
\begin{align}\label{apriori}
\|v\|_Y\leq C(M+1)a^\frac{n-2}{n+1}
\end{align}
and 
\begin{align}\label{vt0}
\lim_{t\rightarrow 0}\|v(t)\|_n=0.
\end{align}
\end{prop}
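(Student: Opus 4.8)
The plan is to solve the integral equation \eqref{NS4} by the contraction mapping principle in the Banach space $Y_{0}$, regarding \eqref{NS4} as the fixed-point problem $v=\Phi(v)$ for the map
\begin{align*}
\Phi(v):=-G_1(v,v)-G_2(v)-G_3(v)+G_4(v)+H_1+H_2,
\end{align*}
where $G_1,\dots,G_4,H_1,H_2$ are the operators introduced in Lemma \ref{key}. By that lemma every term on the right-hand side belongs to $Y_{0}$, so $\Phi$ maps $Y_{0}$ into itself, and the estimates \eqref{g1est}--\eqref{h2est} provide all the bounds we need. In this sense the whole proof is an assembly of Lemma \ref{key}; the analytic difficulty has already been absorbed there.

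First I would record how the various norms of $u_s$ and $\nabla u_s$ occurring in \eqref{g2est}--\eqref{h2est} depend on $a$. The endpoint norms $\|u_s\|_{n/(1+\rho_1)}+\|u_s\|_{n/(1-\rho_2)}\le Ca^{(n-1)/(n+1)}$ and $\|\nabla u_s\|_{n/(2+\rho_3)}+\|\nabla u_s\|_{n/(2-\rho_4)}\le Ca^{n/(n+1)}$ are exactly \eqref{usest} under the parameterization \eqref{staclass2}, while the intermediate norms $\|u_s\|_n$ and $\|\nabla u_s\|_{n/2}$ are controlled by the same powers of $a$ through interpolation, since \eqref{rho1} guarantees $n/(1+\rho_1)<n<n/(1-\rho_2)$ and $n/(2+\rho_3)<n/2<n/(2-\rho_4)$. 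Feeding these into Lemma \ref{key} yields a linear coefficient in front of $\|v\|_Y$ (from $G_2,G_3,G_4$) bounded by $Ca^{(n-1)/(n+1)}$ and an inhomogeneous contribution $\|H_1\|_Y+\|H_2\|_Y\le C(M+1)a^{(n-1)/(n+1)}$. Using $a<1$ and $M+1\ge1$, both are dominated by $\kappa:=(M+1)a^{(n-2)/(n+1)}$.

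Then I would run the fixed-point argument on the closed ball $B_R:=\{v\in Y_{0}\mid\|v\|_Y\le R\}$ with $R:=2C_0\kappa$, where $C_0$ is the constant from the inhomogeneous bound. For the self-mapping property, \eqref{g1est} gives $\|G_1(v,v)\|_Y\le C\|v\|_Y^2\le CR^2$, so that $\|\Phi(v)\|_Y\le CR^2+Ca^{(n-1)/(n+1)}R+C_0\kappa$; since $a^{(n-1)/(n+1)}\le\kappa$ and $R=2C_0\kappa$, the first two terms are $O(\kappa^2)$ and hence absorbed once $\kappa$ is small, giving $\|\Phi(v)\|_Y\le R$. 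For the contraction, the bilinearity of $G_1$ yields $\|G_1(v_1,v_1)-G_1(v_2,v_2)\|_Y\le C(\|v_1\|_Y+\|v_2\|_Y)\|v_1-v_2\|_Y\le 2CR\|v_1-v_2\|_Y$, while $G_2,G_3,G_4$ are linear and contribute the same coefficient $Ca^{(n-1)/(n+1)}$; altogether the Lipschitz constant is at most $2CR+Ca^{(n-1)/(n+1)}\le C'\kappa<1$ provided $\kappa<\varepsilon$ for a suitable $\varepsilon=\varepsilon(n,D)\in(0,\delta]$. Banach's theorem then produces a unique fixed point $v\in B_R$, which solves \eqref{NS4} and satisfies $\|v\|_Y\le R\le C(M+1)a^{(n-2)/(n+1)}$, that is \eqref{apriori}. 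Finally, \eqref{vt0} follows from the last assertion of Lemma \ref{key}, namely that $\|G_1(v,v)(t)\|_n+\sum_i\|G_i(v)(t)\|_n+\sum_j\|H_j(t)\|_n\to0$ as $t\to0$, evaluated along the solution $v=\Phi(v)$.

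The main obstacle is not the fixed-point scheme itself---which is the standard Fujita--Kato iteration---but the careful bookkeeping of the powers of $a$: one must check that every coefficient arising from Lemma \ref{key}, after interpolating the intermediate norms of $u_s$, is controlled by the single smallness quantity $(M+1)a^{(n-2)/(n+1)}$, so that one threshold $\varepsilon$ simultaneously closes both the self-mapping and the contraction. The comparison of exponents such as $(n-1)/(n+1)$ and $(2n-1)/(n+1)$ against $(n-2)/(n+1)$, together with the interpolation step, is precisely where the parameter constraints \eqref{rho1} (and, implicitly through Lemma \ref{key}, \eqref{rho2}) are used.
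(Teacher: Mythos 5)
Your proposal is correct and follows essentially the same route as the paper: both arguments run a contraction scheme in $Y_0$ powered entirely by Lemma \ref{key}, with the intermediate norms $\|u_s\|_n$, $\|\nabla u_s\|_{n/2}$ controlled by interpolation between the exponents in (\ref{usest}), all coefficients absorbed into the single smallness quantity $(M+1)a^{(n-2)/(n+1)}$, and (\ref{vt0}) obtained from (\ref{h0}) together with the decay at $t\to 0$ of the $G_i$ terms. The only cosmetic difference is that the paper writes out the Picard iteration $v_m$ explicitly (which it later reuses in Lemma \ref{lemsharp}), whereas you invoke Banach's fixed point theorem on a closed ball of radius $O\bigl((M+1)a^{(n-2)/(n+1)}\bigr)$; these are the same argument in substance.
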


\begin{proof}
We set 
\begin{alignat}{3}\label{vm}
&v_{0}(t)=0,\nonumber\\ 
&v_{m+1}(t)=\int_0^t e^{-(t-\tau)A_a}P\Big[-v_m\cdot\nabla v_m
-\psi(\tau)v_m\cdot\nabla u_s&&-\psi(\tau)u_s\cdot\nabla v_m
+(1-\psi(\tau))a\frac{\partial v_m}{\partial x_1}\nonumber\\
&&&+h_1(\tau)+h_2(\tau)\Big]d\tau
\end{alignat}
for $m\geq 0.$ It follows from Theorem \ref{rmkrho}, Lemma \ref{key} 
and $a\in (0,1)$ that $v_m\in Y_0$ together with 
\begin{align}
&\|v_{m}\|_{Y,t}\leq \|G_1(v_{m-1},v_{m-1})\|_{Y,t}
+\sum_{i=2}^4\|G_i(v_{m-1})\|_{Y,t}
+\|H_1\|_{Y,t}+\|H_2\|_{Y,t},
\label{vmt}\\
&\|v_{m}\|_{Y}\leq C_1\|v_{m-1}\|_{Y}^2
+C_2a^\frac{n-2}{n+1}\|v_{m-1}\|_{Y}+C_3(M+1)a^\frac{n-2}{n+1},
\label{vmc3}\\
&\|v_{m+1}-v_m\|_{Y}\leq 
\{C_1(\|v_m\|_{Y}+\|v_{m-1}\|_{Y})+C_2a^\frac{n-2}{n+1}\}
\|v_m-v_{m-1}\|_{Y}\nonumber
\end{align}
for all $m\geq 1$. Hence, if we assume  
\begin{align}\label{assump}
(M+1)a^\frac{n-2}{n+1} <\min\left\{\delta,
\frac{1}{2C_2},\frac{1}{16C_1C_3}\right\}=:\varepsilon,
\end{align}
it holds that 
\begin{align}
&\|v_m\|_{Y}\leq 
\frac{1-C_2a^\frac{n-2}{n+1}-\sqrt{\big(1-C_2a^\frac{n-2}{n+1})^2
-4C_1C_3(M+1)a^\frac{n-2}{n+1}}}{2C_1}
\leq 4C_3(M+1)a^\frac{n-2}{n+1},\label{vmapriori}\\
&\|v_{m+1}-v_m\|_{Y}\leq 
\{8C_1C_3(M+1)a^\frac{n-2}{n+1}+C_2a^\frac{n-2}{n+1}\}
\|v_{m}-v_{m-1}\|_{Y}\nonumber
\end{align}
for all $m\geq 1$ and that 
\begin{align*}
8C_1C_3(M+1)a^\frac{n-2}{n+1}+C_2a^\frac{n-2}{n+1}<1.
\end{align*}
Therefore, 
we obtain a solution $v\in Y_{0}$ satisfying (\ref{apriori}) with $C=4C_3$.
Moreover, by letting $m\rightarrow\infty$ in (\ref{vmt}) and by 
using (\ref{g1est})--(\ref{g4est}) and (\ref{h0}), we have 
(\ref{vt0}), which completes the proof.
\end{proof}
\begin{rmk}\label{rmkexist}
Let $b\in L_{\sigma}^n(D)$. By the same procedure, 
we can also construct a solution $T(t)b:=v(t)\in Y_{0}$ 
for the integral equation  
\begin{align}\label{bint}
v(t)=e^{-tA_a}b+\int_0^t e^{-(t-\tau)A_a}P\Big[-v\cdot\nabla v
&-\psi(\tau)v\cdot\nabla u_s-\psi(\tau)u_s\cdot\nabla v\nonumber\\
&+(1-\psi(\tau))a\frac{\partial v}{\partial x_1}
+h_1(\tau)+h_2(\tau)\Big]d\tau
\end{align}
whenever
\begin{align*}
\|b\|_n+(M+1)a^\frac{n-2}{n+1}<\min\left\{\delta,
\frac{1}{2C_2},\frac{1}{16C_1C_0},\frac{1}{16C_1C_3}\right\}
\end{align*}
is satisfied. 
Here, the constant $C_0$ is determined by the following three estimates:
\begin{align*}
\|e^{-tA_a}b\|_q\leq C_0t^{-\frac{1}{2}+\frac{3}{2q}}\|b\|_n,
\quad\quad q=n,\infty;\quad\quad
\|\nabla e^{-tA_a}b\|_n\leq C_0t^{-\frac{1}{2}}\|b\|_n.
\end{align*}
Moreover, we find that the solution $T(t)b$ is estimated by 
\begin{align*}
\|T(\cdot)b\|_{Y}\leq 4\big(C_0\|b\|_n+C_3(M+1)a^\frac{n-2}{n+1}\big).
\end{align*}
This will be used in the proof of uniqueness of solutions within $Y$, see 
(\ref{yr}).  
\end{rmk}
\vspace{0.4cm}

We further derive sharp decay properties of the solution $v(t)$ 
obtained above. To this end, 
the first step is the following. 
In what follows, for simplicity of notation, we write 
\begin{align*}
G_1(t)=G_1(v,v)(t),\quad G_i(t)=G_i(v)(t)
\end{align*}
for $i=2,3,4$, which are defined in Lemma \ref{key}.
\begin{lem}\label{propsharpdecay}
Let $\varepsilon$ be the constant in  
Proposition \ref{yr0existence}. 
Given $\rho\in (0,1)$ satisfying $\rho\leq \min\{\rho_1,\rho_3\}$, 
there exists a constant 
$\varepsilon'=\varepsilon'(\rho,n,D)\in(0,\varepsilon]$ 
such that if\, $0<(M+1)a^{(n-2)/(n+1)}$$<\varepsilon'$, then 
the solution $v(t)$ obtained in Proposition \ref{yr0existence} 
satisfies
\begin{align}
&\|v(t)\|_q=O(t^{-\frac{1}{2}+\frac{n}{2q}-\frac{\rho}{2}}),
\quad\quad n\leq \forall q\leq\infty,
\label{sharpdecay2}\\
&\|\nabla v(t)\|_n=O(t^{-\frac{1}{2}-\frac{\rho}{2}})
\label{sharpdecaygrad2}
\end{align}
as $t\rightarrow\infty$.   
\end{lem}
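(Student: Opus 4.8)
The plan is to establish (\ref{sharpdecay2})--(\ref{sharpdecaygrad2}) by an a priori (absorption) argument carried out in a decay-weighted norm adapted to the rate $t^{-\rho/2}$. For $t\in(0,\infty]$ I would work with
\[
\|v\|_{*,t}:=\sup_{0<\tau<t}\langle\tau\rangle^{\frac{\rho}{2}}
\Big(\|v(\tau)\|_n+\tau^{\frac12}\|v(\tau)\|_\infty
+\tau^{\frac12}\|\nabla v(\tau)\|_n\Big),\qquad \langle\tau\rangle:=\max\{1,\tau\},
\]
so that near $\tau=0$ this reduces to the $Y_{0}$-norm while for large $\tau$ it encodes exactly (\ref{sharpdecay2}) for $q=n,\infty$ together with (\ref{sharpdecaygrad2}). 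Since the solution $v$ produced in Proposition \ref{yr0existence} lies in $Y_{0}$ with $\|v\|_Y\le C(M+1)a^{(n-2)/(n+1)}$, the quantity $\|v\|_{*,t}$ is finite for every finite $t$; this finiteness is what makes it legitimate to move a small multiple of $\|v\|_{*,t}$ from the right-hand side of the estimate back to the left. The goal is to prove $\|v\|_{*,t}\le C_0(M+1)a^{(n-2)/(n+1)}+\theta\,\|v\|_{*,t}$ with $\theta<1$ for $a$ small, uniformly in $t$, and then conclude by interpolation for the intermediate $q\in(n,\infty)$.

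I would first dispose of the time-localized terms. Because $\psi'$ and $\psi(1-\psi)$ vanish for $\tau\ge1$, the forcing $h_1,h_2$ of (\ref{h1})--(\ref{h2}) and the coefficient $1-\psi$ in $G_4$ are supported in $\tau\in[0,1]$; hence for $t\ge2$ one has $t-\tau\ge t/2$ throughout the relevant integral. Applying (\ref{lqlr})--(\ref{lqlrgrad}) with $u_s\in L^{n/(1+\rho_1)}(D)$ then gives $\|H_1(t)\|_n=O(t^{-\rho_1/2})$ and $\|H_1(t)\|_\infty+\|\nabla H_1(t)\|_n=O(t^{-1/2-\rho_1/2})$. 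As $\rho\le\rho_1$, this is the rate-determining contribution, supplying the inhomogeneous constant $\sim M\|u_s\|_n$; the term $H_2$, handled the same way via $u_s\cdot\nabla u_s\in L^\kappa(D)$ with $1/\kappa=(3-\rho_2-\rho_4)/n$ and $a\,\partial_{x_1}u_s\in L^{n/(2-\rho_4)}(D)$, decays at least as fast, where (\ref{rho2}) is exactly what keeps the temporal exponent $-1+(\rho_2+\rho_4)/2$ integrable; likewise $G_4$.

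The heart of the matter is the pair of coupling terms $G_2,G_3$, active for all $\tau$ since $\psi(\tau)=1$ for $\tau\ge1$. For these I would split $\int_0^t=\int_0^{t/2}+\int_{t/2}^{t-1}+\int_{t-1}^t$ as in Lemma \ref{key}. On the two pieces away from the diagonal I use the faster spatial summability $\nabla u_s\in L^{n/(2+\rho_3)}(D)$ and $u_s\in L^{n/(1+\rho_1)}(D)$ from (\ref{usest}), producing temporal exponents $-(1+\rho_3)/2$, $-(1+\rho_1)/2$; combined with $\|v(\tau)\|_\infty\lesssim\|v\|_{*,t}\,\tau^{-1/2-\rho/2}$ this yields $C\|u_s\|_*\|v\|_{*,t}\,t^{-\rho/2}$ (in fact with an extra gain $t^{-\rho_1/2}$ or $t^{-\rho_3/2}$), and it is precisely here that $\rho\le\min\{\rho_1,\rho_3\}$ guarantees the $\int_0^{t/2}$ and $\int_{t/2}^{t-1}$ parts decay no slower than $t^{-\rho/2}$. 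On the near-diagonal piece $\int_{t-1}^t$, where $t-\tau\in(0,1)$, I instead use the borderline summability $u_s\in L^{n/(1-\rho_2)}(D)$ and $\nabla u_s\in L^{n/(2-\rho_4)}(D)$, measuring $v$ in $L^n$ and, for $G_3$, writing $u_s\cdot\nabla v=\nabla\cdot(u_s\otimes v)$ to invoke the $P\nabla\cdot$ estimate (\ref{dual}); the temporal exponents $-1+\rho_2/2$ and $-(1-\rho_4)/2$ then stay integrable, and since $\tau\approx t$ one has $\|v(\tau)\|_n\lesssim\|v\|_{*,t}\,t^{-\rho/2}$, so this piece again contributes $C\|u_s\|_*\|v\|_{*,t}\,t^{-\rho/2}$. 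Here $\|u_s\|_*$ abbreviates the sum of $\|u_s\|_{n/(1+\rho_1)},\|u_s\|_n,\|u_s\|_{n/(1-\rho_2)},\|\nabla u_s\|_{n/(2+\rho_3)},\|\nabla u_s\|_{n/2},\|\nabla u_s\|_{n/(2-\rho_4)}$, all $O(a^{(n-1)/(n+1)})$ by (\ref{usest}) and interpolation. The nonlinear term $G_1=G_1(v,v)$ is quadratic and, by the computation behind (\ref{g1est}) with the decay weights inserted, is bounded by $C\|v\|_Y\|v\|_{*,t}$.

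Collecting these bounds gives $\|v\|_{*,t}\le C_0(M+1)a^{(n-2)/(n+1)}+C(\|u_s\|_*+\|v\|_Y)\|v\|_{*,t}$ for every finite $t$; since $\|u_s\|_*$ and $\|v\|_Y$ are small when $(M+1)a^{(n-2)/(n+1)}<\varepsilon'$, the coefficient $\theta:=C(\|u_s\|_*+\|v\|_Y)<1$, and absorbing the last term yields $\sup_t\|v\|_{*,t}<\infty$, which is (\ref{sharpdecay2})--(\ref{sharpdecaygrad2}). The main obstacle is exactly the exponent bookkeeping for $G_2,G_3$: one must match, on each of the three sub-intervals, the choice of spatial exponent for $u_s,\nabla u_s$ against the Lebesgue exponent used for $v$ so that every temporal singularity is integrable and every resulting power of $t$ is $\le-\rho/2$. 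The condition $\rho\le\min\{\rho_1,\rho_3\}$ is what closes the far-diagonal estimates, whereas the near-diagonal estimates force the use of the borderline summability (and, for $H_2$, of (\ref{rho2})); getting all of these to cohere simultaneously is the delicate point.
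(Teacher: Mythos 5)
Your route is genuinely different from the paper's. The paper splits the work in two: first a weighted absorption argument for the $L^n$ norm alone, in which only the quadratic term $G_1$ is absorbed (the coupling terms $G_2,G_3$ are not fed back into the weighted norm but bounded directly by $Ct^{-\rho_3/2}\|\nabla u_s\|_{n/(2+\rho_3)}\|v\|_Y$ and $Ct^{-\rho_1/2}\|u_s\|_{n/(1+\rho_1)}\|v\|_Y$); second, a restart argument: for $t\ge T>1$ the solution solves the integral equation with data $v(T)$ and with $h_1$, $h_2$ and the $(1-\psi)$-term absent, and an absorption estimate in the norms $\sup_{T\le\tau\le t}(\tau-T)^{1/2}(\cdot)$ yields $t^{1/2}\big(\|v(t)\|_\infty+\|\nabla v(t)\|_n\big)\le C\|v(t/2)\|_n$, i.e.\ (\ref{inftygrad2}), which converts $L^n$ decay into $L^\infty$ and gradient decay. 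Your single norm $\|\cdot\|_{*,t}$ does everything at once; the price is that you must produce weighted far-field estimates for the $L^\infty$/gradient components of all six Duhamel terms from $t=0$, whereas the restart trick needs only three terms and no weights before time $T$. Your three-piece treatment of $G_2,G_3$ is correct, and it is in fact more robust than a one-shot Beta-integral bound: it remains valid when $\rho_1,\rho_3>1$ (the case $n\ge4$), where the singularity $(t-\tau)^{-(1+\rho_3)/2}$ is not integrable up to $\tau=t$.

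The gap is in $G_1$. You bound it by $C\|v\|_Y\|v\|_{*,t}$ ``by the computation behind (\ref{g1est}) with the decay weights inserted''. That computation pairs $\|v(\tau)\|_{2n}$ with $\|\nabla v(\tau)\|_n$, i.e.\ places $v\cdot\nabla v$ in $L^{2n/3}$, with kernels $(t-\tau)^{-1/4}$ (for the $L^n$ component) and $(t-\tau)^{-3/4}$ (for $L^\infty$ and $\nabla$). Inserting the weights, the contribution of $\tau\in(0,t/2)$ is, once $\rho>1/2$ makes the $\tau$-integral convergent, of order $t^{-1/4}$ resp.\ $t^{-3/4}$ and no better; this is weaker than the required $t^{-\rho/2}$ resp.\ $t^{-1/2-\rho/2}$ exactly when $\rho>1/2$. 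This case is not vacuous: the lemma is stated for all $\rho\in(0,1)$ with $\rho\le\min\{\rho_1,\rho_3\}$, and Proposition \ref{propsharp} invokes it with $1/2<\rho<1$ for $n\ge4$. The repair stays inside your framework: for the $L^n$ component use the pairing $\|v\cdot\nabla v\|_{n/2}\le\|v\|_n\|\nabla v\|_n$ with kernel $(t-\tau)^{-1/2}$, for which the Beta integral $\int_0^t(t-\tau)^{-1/2}\tau^{-(1+\rho)/2}\,d\tau=Ct^{-\rho/2}$ converges for all $\rho<1$ (this is what the paper does in (\ref{g1rho1})); for the $L^\infty$/gradient components split $\int_0^t=\int_0^{t/2}+\int_{t/2}^{t}$, use the $L^{n/2}$ pairing with kernel $(t-\tau)^{-1}$ on the far piece (harmless there since $t-\tau\ge t/2$, and $\int_0^{t/2}\tau^{-1/2}\langle\tau\rangle^{-\rho/2}d\tau\le C t^{(1-\rho)/2}$ gives $t^{-1/2-\rho/2}$), and keep the $L^{2n/3}$ pairing only on $(t/2,t)$ where its singularity is integrable and the weights are already of size $t^{-(\cdot)}$. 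With this modification your absorption closes for all admissible $\rho$, and the rest of the proposal (smallness $\|u_s\|_*=O(a^{(n-1)/(n+1)})$, $\|v\|_Y=O((M+1)a^{(n-2)/(n+1)})$, absorption, interpolation for intermediate $q$) is sound.
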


\begin{proof}
We start with the case $q=n$, that is, 
\begin{align}\label{nsharp}
\|v(t)\|_n=O(t^{-\frac{\rho}{2}})
\end{align} 
as $t\rightarrow\infty.$
By using (\ref{lqlr}), we have 
\begin{align}
\|G_1(t)\|_n&\leq
Ct^{-\frac{\rho}{2}}
\big(\sup_{0<\tau<t}\tau^{\frac{1}{2}}\|\nabla v(\tau)\|_n\big)
\big(\sup_{0<\tau<t}\tau^{\frac{\rho}{2}}\|v(\tau)\|_n\big)
\leq Ct^{-\frac{\rho}{2}}\|v\|_Y
\sup_{0<\tau<t}\tau^{\frac{\rho}{2}}\|v(\tau)\|_n,
\label{g1rho1}\\
\|G_2(t)\|_n&\leq Ct^{-\frac{\rho_3}{2}}\|\nabla u_s\|_{\frac{n}{2+\rho_3}}
\sup_{0<\tau<t}\tau^{\frac{1}{2}}\|v(\tau)\|_\infty 
\leq Ct^{-\frac{\rho_3}{2}}
\|\nabla u_s\|_{\frac{n}{2+\rho_3}}\|v\|_Y\label{g2rho1}
\end{align} 
and 
\begin{align}\label{g3rho1}
\|G_3(t)\|_n\leq Ct^{-\frac{\rho_1}{2}}
\|u_s\|_{\frac{n}{1+\rho_1}}\|v\|_Y
\end{align}
for all $t>0$. Moreover, we obtain 
\begin{align}
\|G_4(t)\|_n\leq
 Ca\int_0^{\min\{1,t\}}(t-\tau)^{-\frac{1}{2}}\|v(\tau)\|_n\,
d\tau\leq Cat^{-\frac{1}{2}}\|v\|_Y\label{g4rho1}
\end{align}
for all $t>0$ by use of (\ref{dual}). 
From (\ref{lqlr}) we see that  
\begin{align}\label{h1rho1}
\|H_1(t)\|_n
\leq CM t^{-\frac{\rho_1}{2}}\|u_s\|_{\frac{n}{1+\rho_1}}
\end{align}
and that
\begin{align}\label{h2rho1}
\|H_2(t)\|_n\leq Ct^{-\frac{2-\rho_2}{2}}
\|u_s\|_{\frac{n}{1-\rho_2}}
\|\nabla u_s\|_{\frac{n}{2}}+Cat^{-\frac{1+\rho_3}{2}}
\|\nabla u_s\|_{\frac{n}{2+\rho_3}}
\end{align}
for $t>0$. Note that $\rho_2<1$, see (\ref{rho1}).
Collecting (\ref{g1rho1})--(\ref{h2rho1}) for $t>1$ and (\ref{apriori}) 
with $C=4C_3$ yields 
\begin{align*}
\sup_{0<\tau<t}\tau^{\frac{\rho}{2}}\|v(\tau)\|_n&\leq 
C_4\|v\|_Y
\sup_{0<\tau<t}\tau^{\frac{\rho}{2}}\|v(\tau)\|_n+C_5\\
&\leq 4C_3C_4(M+1)a^{\frac{n-2}{n+1}}
\sup_{0<\tau<t}\tau^{\frac{\rho}{2}}\|v(\tau)\|_n+C_5
\end{align*}
with some constants $C_4=C_4(\rho)>0$ and 
$C_5=C_5(\|v\|_Y,u_s,a,M,\rho_1,\rho_2,\rho_3)>0$
independent of $t$, where $C_3$ comes from estimates of $H_j(t)$ $(j=1,2)$ 
in (\ref{vmc3}).
Therefore, if we assume 
\begin{align*}
(M+1)a^{\frac{n-2}{n+1}}<\min\Big\{\varepsilon,\frac{1}{4C_3C_4}
\Big\}=:\varepsilon',
\end{align*}
we have $\|v(t)\|_n\leq Ct^{-\rho/2}$ for all $t>0$, 
which implies (\ref{nsharp}). 
\par We next show that
\begin{align*}
\|v(t)\|_\infty+\|\nabla v(t)\|_n=O(t^{-\frac{1}{2}-\frac{\rho}{2}})
\end{align*} 
as $t\rightarrow \infty$, 
which together with (\ref{nsharp}) implies 
$(\ref{sharpdecay2})$ and $(\ref{sharpdecaygrad2})$. 
It suffices to show that 
\begin{align}\label{inftygrad2}
t^{\frac{1}{2}}\|v(t)\|_\infty
+t^{\frac{1}{2}}\|\nabla v(t)\|_n\leq C\Big\|v\Big(\frac{t}{2}\Big)\Big\|_n
\end{align}
for all $t\geq 2$. The following argument is similar to Enomoto and 
Shibata \cite{ensh2005}.  
When $t\geq T>1$, we have
\begin{align}\label{Tt}
v(t)=e^{-(t-T)A_a}v(T)-\int_T^t e^{-(t-\tau)A_a}P\big[v\cdot\nabla v
+v\cdot\nabla u_s+u_s\cdot\nabla v\big]\,d\tau.
\end{align}
By the same argument as in the proof of Lemma \ref{key} and by 
(\ref{usest}), (\ref{assump}) as well as (\ref{apriori}) with $C=4C_3$, 
the integral of (\ref{Tt}) is estimated as 
\begin{align*}
\int_T^t&\|e^{-(t-\tau)A_a}P[\cdots]\|_\infty\,d\tau+
\int_T^t\|\nabla e^{-(t-\tau)A_a}P[\cdots]\|_n\,d\tau\\&\leq 
C_1(t-T)^{-\frac{1}{2}}\,
\big(\sup_{T\leq \tau\leq t}\|v(\tau)\|_n\big)^{\frac{1}{2}}\,
\big(\sup_{T\leq \tau\leq t}(\tau-T)^{\frac{1}{2}}\|v(\tau)\|_\infty
\big)^{\frac{1}{2}}\,
\big(\sup_{T\leq \tau\leq t}(\tau-T)^{\frac{1}{2}}\|\nabla v(\tau)\|_n
\big)\\&\quad\quad\quad\quad\quad\quad~+
C_2a^{\frac{n-2}{n+1}}(t-T)^{-\frac{1}{2}}
\big\{\sup_{T\leq \tau\leq t}(\tau-T)^{\frac{1}{2}}\|v(\tau)\|_\infty
+\sup_{T\leq \tau\leq t}(\tau-T)^{\frac{1}{2}}
\|\nabla v(\tau)\|_n\big\}\\
&\leq C_1(t-T)^{-\frac{1}{2}}\|v\|_Y
\sup_{T\leq \tau\leq t}(\tau-T)^{\frac{1}{2}}
\|\nabla v(\tau)\|_n\\
&\quad\quad\quad\quad\quad\quad~
+\frac{1}{2}(t-T)^{-\frac{1}{2}}
\big\{\sup_{T\leq \tau\leq t}(\tau-T)^{\frac{1}{2}}\|v(\tau)\|_\infty
+\sup_{T\leq \tau\leq t}(\tau-T)^{\frac{1}{2}}
\|\nabla v(\tau)\|_n\big\}\\
&\leq \frac{3}{4}(t-T)^{-\frac{1}{2}}
\sup_{T\leq \tau\leq t}(\tau-T)^{\frac{1}{2}}\|\nabla v(\tau)\|_n
+\frac{1}{2}(t-T)^{-\frac{1}{2}}
\sup_{T\leq \tau\leq t}(\tau-T)^{\frac{1}{2}}\|v(\tau)\|_\infty.
\end{align*}
Therefore, we have
\begin{align*}
\sup_{T\leq \tau\leq t}(\tau-T)^{\frac{1}{2}}\|\nabla v(\tau)\|_n
+\sup_{T\leq \tau\leq t}(\tau-T)^{\frac{1}{2}}\|v(\tau)\|_\infty\leq 
C\|v(T)\|_n
\end{align*}
for all $t\geq T$. This combined with 
$t^{1/2}\leq \sqrt{2}(t-T)^{1/2}$ for $t\geq 2T$ asserts that
\begin{align*}
t^{\frac{1}{2}}\|\nabla v(t)\|_n
+t^{\frac{1}{2}}\|v(t)\|_\infty\leq C\|v(T)\|_n
\end{align*}
for all $t\geq 2T$. We then put $T=t/2~(t\geq 2)$ to conclude
(\ref{inftygrad2}).
\end{proof}

Sharp decay properties (\ref{sharpdecay})--(\ref{sharpdecaygrad}) 
for the case $n=3$ are established in the following proposition. 
\begin{prop}\label{propsharp3}
Let $n=3$ and set $\varepsilon_*:=\varepsilon'(\rho,3,D)$ which is the 
constant in Lemma \ref{propsharpdecay} with $\rho:=\min\{\rho_1,\rho_3\}$ 
$($recall that $0<\rho_1<1/2$, $0<\rho_3<1/4$ for $n=3$$).$
If \,$0<(M+1)a^{1/4}<\varepsilon_*$, then 
the solution $v(t)$ obtained in Proposition \ref{yr0existence} 
enjoys $(\ref{sharpdecay})$ and $(\ref{sharpdecaygrad}).$
\end{prop}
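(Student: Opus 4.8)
The plan is to reduce everything to the sharp $L^n$-decay (\ref{ninfty}), namely $\|v(t)\|_n=O(t^{-\rho_1/2})$, and then to recover (\ref{sharpdecay}) and (\ref{sharpdecaygrad}) from it by the smoothing estimate (\ref{inftygrad2}) together with interpolation. Lemma \ref{propsharpdecay} with $\rho:=\min\{\rho_1,\rho_3\}$ already supplies the starting decay $\|v(t)\|_n=O(t^{-\rho/2})$ and, through (\ref{inftygrad2}) at $T=t/2$, the companion bounds $\|v(t)\|_\infty+\|\nabla v(t)\|_n=O(t^{-1/2-\rho/2})$. If $\rho_1\leq\rho_3$ there is nothing more to do; the substance is the case $\rho_3<\rho_1$, where the initial rate $\rho=\rho_3$ must be pushed up to $\rho_1$.

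First I would set up a bootstrap on the decay exponent. Suppose, for some $\gamma\in[\rho/2,\rho_1/2)$, that $\|v(t)\|_n=O(t^{-\gamma})$; applying (\ref{inftygrad2}) at $T=t/2$ promotes this to $\|v(t)\|_\infty=O(t^{-1/2-\gamma})$ and $\|\nabla v(t)\|_n=O(t^{-1/2-\gamma})$. I would then re-run the $L^n$ estimates of the Duhamel terms in (\ref{NS4}), inserting these improved rates and splitting each integral as $\int_0^{t/2}+\int_{t/2}^t$ so that the $Y_0$-bound is used near $\tau=0$ and the decay bound for large $\tau$. The key point is that, writing $v\cdot\nabla u_s\in L^{n/(2+\rho_3)}$ and $u_s\cdot\nabla v\in L^{n/(2+\rho_1)}$ and invoking part 1 of Proposition \ref{proplqlr} (no gradient, so no splitting in $\tau$ beyond the above is needed), the resulting convolution-type integrals yield
\begin{align*}
\|G_2(t)\|_n=O(t^{-\frac{\rho_3}{2}-\gamma}),\qquad
\|G_3(t)\|_n=O(t^{-\frac{\rho_1}{2}-\gamma}),
\end{align*}
so both gain the extra factor $t^{-\gamma}$ compared with (\ref{g2rho1})--(\ref{g3rho1}). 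The quadratic term $G_1$ carries the small factor $\|v\|_Y$ and is absorbed into the left-hand side exactly as in the proof of (\ref{nsharp}) (the $C_4\|v\|_Y\sup+C_5$ algebra); the terms $G_4$ and $H_2$ decay faster than $t^{-\rho_1/2}$. The forcing $H_1$, however, is built from $\psi'(\tau)u_s$, supported in $\tau\in[0,1]$, and by (\ref{h1rho1}) stays pinned at $O(t^{-\rho_1/2})$ regardless of how good the bounds on $v$ become; this is exactly what makes $\rho_1/2$ the sharp rate. Collecting the terms gives $\|v(t)\|_n=O(t^{-\gamma'})$ with $\gamma'=\min\{\rho_1/2,\gamma+\rho_3/2\}$.

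Iterating produces $\gamma_{k+1}=\min\{\rho_1/2,\gamma_k+\rho_3/2\}$ from $\gamma_0=\rho/2$; since each step gains the fixed amount $\rho_3/2>0$ until the cap $\rho_1/2$ is reached, finitely many iterations establish (\ref{ninfty}). A final application of (\ref{inftygrad2}) with $T=t/2$ upgrades (\ref{ninfty}) to $\|v(t)\|_\infty+\|\nabla v(t)\|_n=O(t^{-1/2-\rho_1/2})$, which is (\ref{sharpdecaygrad}). Interpolating $\|v(t)\|_q\leq\|v(t)\|_n^{n/q}\|v(t)\|_\infty^{1-n/q}$ between these endpoint rates gives the exponent $-\tfrac12+\tfrac{n}{2q}-\tfrac{\rho_1}{2}$ for all $3\leq q\leq\infty$, i.e.\ (\ref{sharpdecay}).

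The main obstacle is the bootstrap step itself: one must check that trading the extra temporal decay of $\|v\|_\infty$ and $\|\nabla v\|_n$ against the spatial summability $\nabla u_s\in L^{n/(2+\rho_3)}$ and $u_s\in L^{n/(1+\rho_1)}$ keeps every Duhamel integral in its convergent range — this is where $\rho_1,\rho_3<1/2$ (so that $(1+\rho_1)/2,(1+\rho_3)/2<1$) and $\gamma<1/2$ are used — and that the smallness of $\|v\|_Y$ genuinely absorbs $G_1$ at every stage. The conceptual point that makes the scheme terminate is the recognition that $G_2,G_3$ improve with each iteration while the pure forcing $H_1$ does not, so the hierarchy collapses precisely to the rate $t^{-\rho_1/2}$ dictated by $u_s\in L^{n/(1+\rho_1)}$.
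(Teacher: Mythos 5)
Your proposal follows essentially the same route as the paper: Lemma \ref{propsharpdecay} supplies the starting rate, and for $\rho_3<\rho_1$ one iterates the $L^3$-decay estimate, gaining $\rho_3/2$ per step (driven by the $G_2$ term, with $H_1$ pinning the cap $\rho_1/2$); the paper formalizes exactly your recursion as an induction on $\sigma_k=\min\{k\rho_3/2,\,\rho_1/2\}$, and then (\ref{inftygrad2}) plus interpolation yields (\ref{sharpdecay})--(\ref{sharpdecaygrad}), just as you do. The one place you deviate is the quadratic term: you absorb $G_1$ into the left-hand side using the smallness of $\|v\|_Y$, ``exactly as in the proof of (\ref{nsharp})''. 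This is delicate if $\varepsilon_*$ is kept equal to $\varepsilon'(\rho,3,D)$ as the statement requires: the absorption constant at weight exponent $\gamma'$ comes from the Beta-type integral $\int_0^t(t-\tau)^{-1/2}\tau^{-1/2-\gamma'}\,d\tau=B\bigl(\tfrac12,\tfrac12-\gamma'\bigr)t^{-\gamma'}$ and is increasing in $\gamma'$, so at the later steps (where $\gamma'$ climbs up to $\rho_1/2>\rho_3/2=\rho/2$) it can exceed the constant $C_4(\rho)$ built into the definition of $\varepsilon'$, and the absorption inequality $C_4(\gamma')\|v\|_Y<1$ is then no longer guaranteed; your argument would force a further shrinking of $\varepsilon_*$ (harmless for Theorem \ref{attainthm}, which only asserts existence of some $\varepsilon_*$, but not for the proposition with $\varepsilon_*$ fixed). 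The paper sidesteps this entirely: in the induction step it estimates $G_1$ quadratically, $\|G_1(t)\|_3\leq Ct^{-2\sigma_{k-1}}J_{k-1}(v)^2$, using only the \emph{finiteness} of the weighted norm $J_{k-1}(v)$ from the previous step, and since $2\sigma_{k-1}\geq\sigma_{k-1}+\rho_3/2\geq\sigma_k$ this already decays at the target rate, so no smallness (hence no new restriction on $a$) is needed. Your scheme is repaired by replacing the absorption with this observation (in your notation, use $\|v(\tau)\|_3\|\nabla v(\tau)\|_3\lesssim \tau^{-1/2}(1+\tau)^{-2\gamma}$ and $2\gamma\geq\gamma+\rho_3/2$); everything else, including your correct but unnecessary improvement of $G_3$ to $O(t^{-\rho_1/2-\gamma})$, matches the paper's proof.
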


\begin{proof}
The case $\rho_1\leq\rho_3$ directly follows from 
Lemma \ref{propsharpdecay}. 
To discuss the other case $\rho_3<\rho_1$, we show by induction 
that if $0<(M+1)a^{1/4}<\varepsilon_*$, then 
\begin{align}\label{sigmak3}
\|v(t)\|_3=O(t^{-\sigma_k}),\qquad\sigma_k:
=\min\Big\{\frac{k}{2}\rho_3,\,\frac{\rho_1}{2}\Big\}
\end{align}
as $t\rightarrow\infty$ for all $k\geq 1$. 
We already know (\ref{sigmak3}) with $k=1$ from Lemma \ref{propsharpdecay}. 
\par Let $k\geq 2$ and suppose (\ref{sigmak3}) with $k-1$. By taking 
(\ref{apriori}) (near $t=0$) and (\ref{inftygrad2}) into account, we have 
\begin{align*}
J_{k-1}(v):=\sup_{\tau>0}(1+\tau)^{\sigma_{k-1}}
\|v(\tau)\|_3+
\sup_{\tau>0}\tau^{\frac{1}{2}}(1+\tau)^{\sigma_{k-1}}
\big(\|v(\tau)\|_\infty+\|\nabla v(\tau)\|_3\big)<\infty.
\end{align*}
We use this to see that
\begin{align*}
\|G_1(t)\|_3&\leq C\int_0^t(t-\tau)^{-\frac{1}{2}}\tau^{-\frac{1}{2}}
(1+\tau)^{-2\sigma_{k-1}}\,d\tau\\
&\qquad\qquad
\times\big(\sup_{\tau>0}(1+\tau)^{\sigma_{k-1}}
\|v(\tau)\|_3\big)
\big(\sup_{\tau>0}\tau^{\frac{1}{2}}(1+\tau)^{\sigma_{k-1}}
\|\nabla v(\tau)\|_3\big)\\
&\leq Ct^{-2\sigma_{k-1}}J_{k-1}(v)^2,
\end{align*}
and that
\begin{align*}
\|G_2(t)\|_3&\leq C\int_0^t(t-\tau)^{-\frac{1+\rho_3}{2}}
\tau^{-\frac{1}{2}}(1+\tau)^{-\sigma_{k-1}}\,d\tau\,
\|\nabla u_s\|_{\frac{3}{2+\rho_3}}
\sup_{\tau>0}\tau^{\frac{1}{2}}(1+\tau)^{\sigma_{k-1}}\|v(\tau)\|_\infty
\\
&\leq Ct^{-\frac{\rho_3}{2}-\sigma_{k-1}}
\|\nabla u_s\|_{\frac{3}{2+\rho_3}}J_{k-1}(v)
\end{align*}
for $t>0$ due to $\sigma_{k-1}\leq\rho_1/2<1/4$. 
From these and (\ref{g3rho1})--(\ref{h2rho1}), we obtain 
(\ref{sigmak3}) with $k$. 
We thus conclude (\ref{sharpdecay}) with $q=3$, 
which together with (\ref{inftygrad2}) completes the proof.
\end{proof}

To derive even more rapid decay properties of the solution $v(t)$ 
for $n\geq 4$, 
we need the following lemma, 
which gives the $L^{q_0}$-decay of $v(t)$
with a specific $q_0$, see (\ref{q0}) below. 
\begin{lem}\label{lemsharp}
Let $n\geq 4$. Suppose $1<\rho_1\leq 1+\rho_3$ in addition to 
$(\ref{rho1})$ $($the set of those parameters is 
nonvoid when $n\geq 4)$. 
Let $\varepsilon$ be the constant in Proposition \ref{yr0existence} and 
$v(t)$ the solution obtained there. 
Given $\gamma$ satisfying
\begin{align}\label{gamma} 
\max\Big\{0,\,\frac{\rho_1+3-n}{2}\Big\}<\gamma<\frac{1}{2}
\end{align}
$($note that $(\ref{rho1})$ yields $\rho_1<n-2)$, there exists a constant 
$\varepsilon''=\varepsilon''(\gamma,n,D)
\in (0,\varepsilon]$ such that if\, $0<(M+1)a^{(n-2)/(n+1)}
<\varepsilon''$, 
then $v(t)\in L^{q_0}(D)$ for all $t>0$ and 
\begin{align}\label{vq}
\sup_{\tau>0}(1+\tau)^\gamma\|v(\tau)\|_{q_0}<\infty,
\end{align}
where 
\begin{align}\label{q0}
q_0:=\frac{n}{1+\rho_1-2\gamma}\,(<n).
\end{align}
\end{lem}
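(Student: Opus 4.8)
The plan is to run a Duhamel bootstrap directly on the integral equation (\ref{NS4}), measuring $v$ in the single scale-subcritical quantity $K(t):=\sup_{0<\tau<t}(1+\tau)^\gamma\|v(\tau)\|_{q_0}$, and to show $K(\infty)<\infty$ under the smallness hypothesis. Since $q_0<n$ and $D$ has infinite measure, membership $v(t)\in L^{q_0}(D)$ is not automatic from $v\in Y_0$; I would therefore carry the bound along the successive approximations $v_m$ of (\ref{vm}) (each of which lies in $L^{q_0}$ because the data terms do and the recursion preserves this), prove $\sup_m K_m(\infty)<\infty$, and pass to the limit. Writing $v=-G_1-G_2-G_3+G_4+H_1+H_2$ as in Lemma \ref{key}, the whole matter reduces to estimating each of the six pieces in $L^{q_0}$ against $(1+t)^{-\gamma}$.

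First I would treat the genuine data $H_1,H_2$, which do not see $K$. For $H_1=-\int_0^t e^{-(t-\tau)A_a}P[\psi'u_s]\,d\tau$ I map $u_s\in L^{n/(1+\rho_1)}(D)$ into $L^{q_0}$ by (\ref{lqlr}); the cost is exactly $(t-\tau)^{-\gamma}$ because $\tfrac n2\big(\tfrac{1+\rho_1}{n}-\tfrac{1+\rho_1-2\gamma}{n}\big)=\gamma$, and since $\psi'$ is supported in $[0,1]$ the integral over $[0,\min\{1,t\}]$ produces precisely $O((1+t)^{-\gamma})$. This is what fixes the exponent $\gamma$. The term $H_2$, supported likewise in $[0,1]$, is handled as in (\ref{h2est}) and (\ref{h2rho1}) using $u_s\in L^{n/(1-\rho_2)}$, $\nabla u_s\in L^{n/(2-\rho_4)}$ together with (\ref{rho2}) and, where the target exponent forces it, the form $u_s\cdot\nabla u_s=\nabla\cdot(u_s\otimes u_s)$ with (\ref{dual}); being concentrated near $t=0$ it decays at least like $(1+t)^{-\gamma}$.

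Next come the terms linear in $v$. For $G_2$ I would not invoke $K$ at all: estimating $\|v\cdot\nabla u_s\|\le\|v\|_\infty\|\nabla u_s\|$ and using the $Y_0$-bound $\|v(\tau)\|_\infty\le\|v\|_Y\tau^{-1/2}$, the three-interval splitting $\int_0^{t/2}+\int_{t/2}^{t-1}+\int_{t-1}^t$ of (\ref{g23})--(\ref{g28}) (with $\nabla u_s$ taken in $L^{n/2}$, $L^{n/(2+\rho_3)}$, $L^{n/(2-\rho_4)}$ respectively, the last ensuring integrability as $\tau\to t$) yields $\|G_2(t)\|_{q_0}\lesssim\|v\|_Y\|\nabla u_s\|(1+t)^{-\gamma}$; here the condition $n/(2+\rho_3)\le q_0$ is exactly $\rho_1\le 1+\rho_3$. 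The pieces $G_1,G_3,G_4$ feed back into $K$: writing $v\cdot\nabla v=\nabla\cdot(v\otimes v)$, $u_s\cdot\nabla v=\nabla\cdot(u_s\otimes v)$ and $a\,\partial_{x_1}v=a\,\nabla\cdot(v\otimes e_1)$ and applying the dual estimate (\ref{dual}) keeps $\|v\|_{q_0}$ on the right. For $G_1$ one bounds $\|v\otimes v\|_{q_0}\le\|v\|_{q_0}\|v\|_\infty$ and gets $\int_0^t(t-\tau)^{-1/2}\tau^{-1/2}(1+\tau)^{-\gamma}\,d\tau\sim(1+t)^{-\gamma}$ by Beta-function asymptotics (where $\gamma<\tfrac12$ is used), with coefficient $\|v\|_Y K$; for $G_4$, supported in $[0,1]$, with coefficient $aK$; for $G_3$ one again splits the time integral, using $u_s\in L^n$ on $[0,t/2]$ and $u_s\in L^{n/(1-\rho_2)}$ near $\tau=t$, with coefficient $\|u_s\|\,K$.

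Collecting everything gives $K(t)\le C_*\big(\|v\|_Y+\|u_s\|+\|\nabla u_s\|+a\big)K(t)+C_{\mathrm{data}}$ for all $t$, and since Theorem \ref{rmkrho} and (\ref{apriori}) make the bracket $O\big((M+1)a^{(n-2)/(n+1)}\big)$, choosing $\varepsilon''$ small forces $C_*(\cdots)\le\tfrac12$, whence $K(t)\le 2C_{\mathrm{data}}$ uniformly; letting $t\to\infty$ yields (\ref{vq}). The step I expect to be the real obstacle is $G_3$: $u_s$ only lies in $L^{n/(1+\rho_1)}$, so to extract the factor $\|v\|_{q_0}$ through (\ref{dual}) one must route the worst part through $u_s\in L^n$, and the requirement that the resulting convolution exponent $\kappa=n/(2+\rho_1-2\gamma)$ satisfy the hypothesis $\kappa\ge n/(n-1)$ of (\ref{dual}) is precisely the lower bound $\gamma>(\rho_1+3-n)/2$ in (\ref{gamma}); keeping the near-diagonal part integrable simultaneously forces the splitting with $u_s\in L^{n/(1-\rho_2)}$, exactly as in Lemma \ref{key}.
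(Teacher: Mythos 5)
Your overall architecture coincides with the paper's: you carry the bound along the approximations $v_m$ of (\ref{vm}) and pass to the limit, you fix the rate $\gamma$ by the $H_1$ estimate with $u_s\in L^{n/(1+\rho_1)}$, you treat $G_1$, $G_3$, $G_4$ in divergence form through (\ref{dual}) so that $\|v\|_{q_0}$ reappears on the right, and you correctly locate the origin of the lower bound in (\ref{gamma}) in the constraint $q\geq n/(n-1)$ of (\ref{dual}) applied to $F=u_s\otimes v$. The genuine gap is your treatment of $G_2$. You propose to avoid $K$ altogether by estimating $\|v\cdot\nabla u_s\|\leq\|v\|_\infty\|\nabla u_s\|$ with $\nabla u_s$ taken in $L^{n/2}$, $L^{n/(2+\rho_3)}$, $L^{n/(2-\rho_4)}$ on the three subintervals and then mapping into $L^{q_0}$ by (\ref{lqlr}). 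But Proposition \ref{proplqlr} only smooths upward in integrability (it requires the source exponent to be at most the target exponent, and no estimate of this type can lower the exponent on a domain of infinite measure), whereas here $q_0=n/(1+\rho_1-2\gamma)$ with $\rho_1>1$ and the lemma must hold for every $\gamma$ in the range (\ref{gamma}). Whenever $\rho_1>1+2\gamma$ (possible for $n\geq 5$, e.g. $n=6$, $\rho_1=3$, any $\gamma<1/2$) one has $q_0<n/2$, so the piece with $\|\nabla u_s\|_{n/2}$ is inadmissible; and the near-diagonal piece with $\|\nabla u_s\|_{n/(2-\rho_4)}$ requires $n/(2-\rho_4)\leq q_0$, i.e. $2\gamma\geq\rho_1-1+\rho_4$, which fails for all $\gamma$ near the lower bound in (\ref{gamma}), for every $n\geq 4$. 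So $G_2$, as you set it up, cannot be estimated in $L^{q_0}$ precisely in the regime ($\rho_1>1$, $q_0<n$) that the lemma is designed for.

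The repair is to treat $G_2$ exactly like your $G_3$, which is what the paper does: since $\nabla\cdot v_{m-1}=0$, one has $v_{m-1}\cdot\nabla u_s=\nabla\cdot(u_s\otimes v_{m-1})$, and (\ref{dual}) with $F=u_s\otimes v_{m-1}$ pairs $\|v_{m-1}(\tau)\|_{q_0}$ (hence $K_{m-1}$) against $\|u_s\|_{n}$, $\|u_s\|_{n/(1+\lambda)}$, $\|u_s\|_{n/(1-\rho_2)}$ on the three subintervals; this yields (\ref{g2m}), identical in form to (\ref{g3m}), and the smallness of the $u_s$ norms keeps the coefficient of $K_{m-1}$ below $1/2$. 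Two secondary inaccuracies: your claim that ``$n/(2+\rho_3)\leq q_0$ is exactly $\rho_1\leq 1+\rho_3$'' is off (that inequality reads $\rho_1\leq 1+\rho_3+2\gamma$); in the paper the hypothesis $\rho_1\leq 1+\rho_3$ is used not in $G_2$ but in the $H_2$ estimate, for the term $a\,\partial u_s/\partial x_1$ measured by $\|\nabla u_s\|_{n/(2+\rho_3)}$ when $t>2$. Moreover your $H_2$ sketch via $\|u_s\|_{n/(1-\rho_2)}\|\nabla u_s\|_{n/(2-\rho_4)}$, taken from (\ref{h2est}) and (\ref{h2rho1}), hits the same exponent obstruction as $G_2$ when $\rho_1$ is large; the paper avoids it by taking $\|u_s\|_{n/(1+\kappa)}\|\nabla u_s\|_{n/2}$ with $\max\{0,\rho_1-2\}<\kappa<\min\{n-3,\rho_1-2\gamma\}$. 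Your fallback to the divergence form with (\ref{dual}) can indeed be made to work there as well, but the admissible exponent window has to be exhibited, not just invoked.
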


\begin{proof}
We show that there exists a constant 
$\varepsilon''(\gamma,n,D)\in (0,\varepsilon]$ such that if 
$0<(M+1)a^{(n-2)/(n+1)}<\varepsilon''$, then 
$v_m(t)\in L^{q_0}(D)$ for all $t>0$ along with 
\begin{align}\label{vm2}
&K_m:=\sup_{\tau>0}(1+\tau)^\gamma\|v_m(\tau)\|_{q_0}<\infty,
&K_m\leq \frac{1}{2}K_{m-1}+C(M+1)a^{\frac{n-1}{n+1}}
\end{align}
for all $m\geq 1$,
where $v_m(t)$ is the approximate 
solution defined by (\ref{vm}) and 
$C$ is a positive constant independent of $a$ and $m$. We use 
(\ref{lqlr}) to see that
\begin{align}\label{h1q0}
\int_0^t\|e^{-(t-\tau)A_a}Ph_1(\tau)\|_{q_0}\,d\tau
\leq CM\|u_s\|_{\frac{n}{1+\rho_1}}
\int_0^{\min\{1,t\}} (t-\tau)^{-\gamma}\,d\tau
\leq CM\|u_s\|_{\frac{n}{1+\rho_1}}(1+t)^{-\gamma}
\end{align}
for $t>0$. Moreover, it holds that 
\begin{align*}
\int_0^t\Big\|e^{-(t-\tau)A_a}
P\Big[\psi(\tau)\big(1-\psi(\tau)\big)a
\frac{\partial u_s}{\partial x_1}\Big]\Big\|_{q_0}\,d\tau&
\leq Ca\|\nabla u_s\|_{r}
\end{align*}
for $t\leq 2$, where $r:=\min\{n/(2-\rho_4),q_0\}$ and that
\begin{align*}
\int_0^t\Big\|e^{-(t-\tau)A_a}
P\Big[\psi(\tau)\big(1-\psi(\tau)\big)a
\frac{\partial u_s}{\partial x_1}\Big]\Big\|_{q_0}\,d\tau
&\leq Ca\|\nabla u_s\|_{\frac{n}{2+\rho_3}}
\int_0^1(t-\tau)^{-\gamma-\frac{1+\rho_3-\rho_1}{2}}\,d\tau\\
&\leq Ca\|\nabla u_s\|_{\frac{n}{2+\rho_3}}t^{-\gamma}
\end{align*}
for $t>2$ as well as that 
\begin{align*}
\int_0^t\|e^{-(t-\tau)A_a}
P[\psi(\tau)\big(1-\psi(\tau)\big)u_s\cdot\nabla u_s]\|_{q_0}\,d\tau&\leq 
C\|u_s\|_{\frac{n}{1+\kappa}}
\|\nabla u_s\|_{\frac{n}{2}}\int_0^{\min\{1,t\}}
(t-\tau)^{-1-\gamma+\frac{\rho_1-\kappa}{2}}\,d\tau\\
&\leq C\|u_s\|_{\frac{n}{1+\kappa}}
\|\nabla u_s\|_{\frac{n}{2}}(1+t)^{-\gamma}
\end{align*}
for $t>0$, where 
$\max\{0,\rho_1-2\}<\kappa<\min\{n-3,\rho_1-2\gamma\}$ 
(note that (\ref{rho1}) yields $\rho_1<n-1$). These estimates imply
\begin{align*}
\int_0^t\|e^{-(t-\tau)A_a}Ph_2(\tau)\|_{q_0}\,d\tau
\leq C(a\|\nabla u_s\|_{r}+a\|\nabla u_s\|_{\frac{n}{2+\rho_3}}+
\|u_s\|_{\frac{n}{1+\kappa}}
\|\nabla u_s\|_{\frac{n}{2}})(1+t)^{-\gamma}
\end{align*}
for $t>0$, which together with (\ref{h1q0}) and (\ref{usest})
leads us to $v_1(t)\in L^{q_0}(D)$ for all $t>0$ with 
\begin{align}\label{v1}
K_1\leq C(M+1)a^{\frac{n-1}{n+1}}.
\end{align}
This proves (\ref{vm2}) with $m=1$ since $K_0=0$. 
\par Let $m\geq 2$ and suppose that  
$v_{m-1}(t)\in L^{q_0}(D)$ for all $t>0$ and 
(\ref{vm2}) with $m-1$. Then we have 
$G_1(v_{m-1},v_{m-1})(t)\in L^{q_0}(D)$ for $t>0$ with 
\begin{align}\label{g1m}
\sup_{\tau>0}(1+\tau)^{\gamma}\|G_1(v_{m-1},v_{m-1})(\tau)\|_{q_0}
\leq CK_{m-1}
\sup_{\tau>0}\tau^{\frac{1}{2}}\|\nabla v_{m-1}(\tau)\|_n.
\end{align}
Let $t\geq 2$ and split the integral into 
\begin{align*}
\int_0^t\|e^{-(t-\tau)A_a}P[\psi(\tau)u_s\cdot \nabla v_{m-1}]
\|_{q_0}\,d\tau
=\int_0^{\frac{t}{2}}+\int_{\frac{t}{2}}^{t-1}+\int_{t-1}^t. 
\end{align*}
Let $\lambda\in (0,\rho_1]$ satisfy $\lambda<n-3+2\gamma-\rho_1$; in fact, 
we can take such $\lambda$ due to (\ref{gamma}). 
Then (\ref{dual}) with $F=v_{m-1}\otimes u_s$ implies 
\begin{align*}
\int_0^{\frac{t}{2}}&\leq C\int_0^{\frac{t}{2}}(t-\tau)^{-1}\|u_s\|_n
\|v_{m-1}(\tau)\|_{q_0}\,d\tau\leq
Ct^{-\gamma}\|u_s\|_{n}K_{m-1},\\
\int_{\frac{t}{2}}^{t-1}&\leq C\int_{\frac{t}{2}}^{t-1}
(t-\tau)^{-1-\frac{\lambda}{2}}\|u_s\|_{\frac{n}{1+\lambda}}
\|v_{m-1}(\tau)\|_{q_0}\,d\tau\leq 
Ct^{-\gamma}\|u_s\|_{\frac{n}{1+\lambda}}K_{m-1},\\
\int_{t-1}^t&\leq C\int_{t-1}^t(t-\tau)^{-1+\frac{\rho_2}{2}}
\|u_s\|_{\frac{n}{1-\rho_2}}
\|v_{m-1}(\tau)\|_{q_0}\,d\tau\leq 
Ct^{-\gamma}\|u_s\|_{\frac{n}{1-\rho_2}}K_{m-1}
\end{align*}
for $t\geq 2$. Moreover, we use (\ref{dual}) again to see that 
\begin{align*}
\int_0^t\|e^{-(t-\tau)A_a}P[\psi(\tau)u_s\cdot \nabla v_{m-1}]
\|_{q_0}\,d\tau&\leq C\int_0^t(t-\tau)^{-1+\frac{\rho_2}{2}}
\|u_s\|_{\frac{n}{1-\rho_2}}\|v_{m-1}(\tau)\|_{q_0}\,d\tau\\
&\leq C\|u_s\|_{\frac{n}{1-\rho_2}}K_{m-1}
\end{align*}
for $t\leq 2$. We thus conclude 
$G_3(v_{m-1})(t)\in L^{q_0}(D)$ for $t>0$ with 
\begin{align}\label{g3m}
\sup_{\tau>0}(1+\tau)^\gamma\|G_3(v_{m-1})(\tau)\|_{q_0}\leq
C(\|u_s\|_n+\|u_s\|_{\frac{n}{1+\lambda}}+\|u_s\|_{\frac{n}{1-\rho_2}})
K_{m-1}.
\end{align}
By the same calculation, we have 
$G_2(v_{m-1})(t)\in L^{q_0}(D)$ for $t>0$ with 
\begin{align}\label{g2m}
\sup_{\tau>0}(1+\tau)^\gamma\|G_2(v_{m-1})(\tau)\|_{q_0}
\leq C(\|u_s\|_n+\|u_s\|_{\frac{n}{1+\lambda}}+\|u_s\|_{\frac{n}{1-\rho_2}})
K_{m-1}.
\end{align}
We also have 
\begin{align*}
\int_0^t\Big\|e^{-(t-\tau)A_a}P\Big[\big(1-\psi(\tau)\big)a
\frac{\partial v_{m-1}}{\partial x_1}\Big]\Big\|_{q_0}&\leq 
Ca\int_0^{\min\{1,t\}}(t-\tau)^{-\frac{1}{2}}\|v_{m-1}(\tau)\|_{q_0}\,d\tau
\\&\leq CaK_{m-1}(1+t)^{-\frac{1}{2}}\leq CaK_{m-1}(1+t)^{-\gamma}
\end{align*}
for $t>0$ by (\ref{dual}). 
This together with (\ref{v1})--(\ref{g2m}), 
(\ref{usest}) and (\ref{vmapriori}) yields 
$v_{m}(t)\in L^{q_0}(D)$ for $t>0$ and 
\begin{align*}
K_m&\leq C(M+1)a^{\frac{n-1}{n+1}}+
\widetilde{C}_1\Big\{
\big(\sup_{\tau>0}\tau^{\frac{1}{2}}\|\nabla v_{m-1}(\tau)\|_n\big)+
\|u_s\|_n+\|u_s\|_{\frac{n}{1+\lambda}}+\|u_s\|_{\frac{n}{1-\rho_2}}
+a\Big\}K_{m-1}\\
&\leq C(M+1)a^{\frac{n-1}{n+1}}+\widetilde{C}_1
(4C_3+\widetilde{C}_2)(M+1)a^{\frac{n-2}{n+1}}K_{m-1}.
\end{align*}
\par Suppose 
\begin{align*}
(M+1)a^{\frac{n-2}{n+1}}<\min\left\{\varepsilon,\,
\frac{1}{2\widetilde{C}_1(4C_3+\widetilde{C}_2)}\right\}=:\varepsilon'',
\end{align*}
then we get (\ref{vm2}) with $m$ and, thereby, conclude 
\begin{align*}
K_m\leq 2C(M+1)a^{\frac{n-1}{n+1}}
\end{align*}
for all $m\geq 1$. Since we know that 
$\|v_m(t)-v(t)\|_n\rightarrow 0$ as $m\rightarrow\infty$ for each $t>0$,
we obtain $v(t)\in L^{q_0}(D)$ for $t>0$ with 
\begin{align*}
\sup_{\tau>0}(1+\tau)^\gamma\|v(\tau)\|_{q_0}
\leq 2C(M+1)a^{\frac{n-1}{n+1}}<\infty,
\end{align*}
which completes the proof.
\end{proof}

In view of Lemma \ref{propsharpdecay} and Lemma \ref{lemsharp}, 
we prove sharp decay properties 
(\ref{sharpdecayn})--(\ref{sharpdecaygradn}) for $n\geq 4$.
\begin{prop}\label{propsharp}
Let $n\geq 4$. Suppose $\rho_3>1$ and $1<\rho_1\leq 1+\rho_3$ 
in addition to $(\ref{rho1})$ 
$($the set of those parameters is nonvoid when $n\geq 4).$ 
Let $\varepsilon$ be the constant in Proposition \ref{yr0existence}. 
There exists a constant
$\varepsilon_*=\varepsilon_*(n,D)\in(0,\varepsilon]$ 
such that if $0<(M+1)a^{(n-2)/(n+1)}<\varepsilon_*$, 
then the solution $v(t)$ obtained in Proposition \ref{yr0existence} enjoys 
$(\ref{sharpdecayn})$ and $(\ref{sharpdecaygradn})$.
\end{prop}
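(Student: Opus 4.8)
The plan is to reduce the entire proposition to the single sharp bound (\ref{ninfty}), i.e. $\|v(t)\|_n=O(t^{-\rho_1/2})$ as $t\to\infty$. Once this is in hand, applying (\ref{inftygrad2}) from the proof of Lemma \ref{propsharpdecay} with $T=t/2$ promotes it to $\|v(t)\|_\infty+\|\nabla v(t)\|_n=O(t^{-1/2-\rho_1/2})$, which is (\ref{sharpdecaygradn}) and the endpoint $q=\infty$ of (\ref{sharpdecayn}); interpolation between $L^n$ and $L^\infty$ then gives (\ref{sharpdecayn}) for every $q\in[n,\infty]$, exactly as in Enomoto and Shibata \cite{ensh2005}. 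So the whole task is (\ref{ninfty}).

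Two inputs drive the argument. Since $\rho_1,\rho_3>1$, Lemma \ref{propsharpdecay} applies for any $\rho\in(0,1)$ and yields a baseline $\|v(t)\|_n=O(t^{-\rho/2})$ with $\rho$ as close to $1$ as we wish. This cannot by itself reach the target exponent $\rho_1/2>1/2$, and it is exactly here that the three-dimensional scheme of Proposition \ref{propsharp3} fails. The remedy is Lemma \ref{lemsharp}: for $\gamma<1/2$ close to $1/2$ it provides $\|v(t)\|_{q_0}=O(t^{-\gamma})$ in the strictly better-summable space $L^{q_0}$ with $q_0=n/(1+\rho_1-2\gamma)<n$ of (\ref{q0}). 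The extra summability of $L^{q_0}$ is converted, through the sharp estimate (\ref{dual}), into the additional decay that the $L^n$-norm alone cannot produce; note that (\ref{gamma}) is precisely the condition ensuring $q_0\ge n/(n-2)$, so that (\ref{dual}) is applicable to $F=u_s\otimes v$ below.

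The core is then a finite bootstrap along $\sigma_k=\min\{\rho_1/2,\,\tfrac12+\sigma_{k-1}\}$ with $\sigma_1=\rho/2$, proving $\|v(t)\|_n=O(t^{-\sigma_k})$ by induction and estimating each term of (\ref{NS4}) as in Lemma \ref{key}, now equipped also with the $L^{q_0}$-decay. The forcing $H_1$ pins the limiting rate, since $\|e^{-tA_a}Pu_s\|_n\le Ct^{-\rho_1/2}\|u_s\|_{n/(1+\rho_1)}$ and $u_s\in L^{n/(1+\rho_1)}$ give exactly $t^{-\rho_1/2}$; $H_2$, being supported in $\tau\in(0,1)$, decays at least as fast, the hypotheses $\rho_1\le1+\rho_3$ and (\ref{rho2}) being what guarantee this. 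The nonlinear term $G_1$ and the lower-order term $G_4$ are absorbed via (\ref{dual}) and the smallness (\ref{apriori}). At each step the companion rates $\|v(t)\|_\infty+\|\nabla v(t)\|_n=O(t^{-1/2-\sigma_{k-1}})$ (again from (\ref{inftygrad2})) feed the coupling terms and advance $\sigma_k$ by $1/2$ until it saturates at $\rho_1/2$.

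The main obstacle is the coupling term $G_3=\int_0^t e^{-(t-\tau)A_a}P[\psi\,u_s\cdot\nabla v]\,d\tau$, and this is exactly what forces $q_0$. Writing $u_s\cdot\nabla v=\nabla\cdot(u_s\otimes v)$ and splitting $\int_0^t=\int_0^{t/2}+\int_{t/2}^{t-1}+\int_{t-1}^t$ as in Lemma \ref{key}, I would bound the head $\int_0^{t/2}$ by (\ref{dual}) with $F=u_s\otimes v$, pairing $u_s\in L^n$ with $v\in L^{q_0}$: the calibration $q_0=n/(1+\rho_1-2\gamma)$ makes this head contribute precisely $t^{-\rho_1/2}$ after integrating $\|v(\tau)\|_{q_0}=O(\tau^{-\gamma})$ over $[0,t/2]$. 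The tails are handled by pairing $u_s\in L^{n/(1+\lambda)}$ on $\int_{t/2}^{t-1}$ and the near-body norm $u_s\in L^{n/(1-\rho_2)}$ on $\int_{t-1}^t$ with $\nabla v\in L^n$, where $\lambda$ ranges in the window opened by (\ref{gamma}) and $n\ge4$; the same three-way split, with $\nabla u_s\in L^{n/(2+\rho_3)}$ on the head and $\nabla u_s\in L^{n/(2-\rho_4)}$ on the last tail, disposes of $G_2$. The delicate point, carried out simultaneously for $G_2$ and $G_3$, is to check that each of the six pieces decays at rate $\ge\rho_1/2$ and that every time integral converges---in particular near $\tau=t$---including the boundary case $\rho_1=1+\rho_3$, for which the freedom $\gamma\uparrow1/2$ is essential. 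Finally, taking $\varepsilon_*$ small forces the constant multiplying $\sup_{0<\tau<t}\tau^{\sigma_k}\|v(\tau)\|_n$ on the right to be $<1$, closing the induction and yielding (\ref{ninfty}).
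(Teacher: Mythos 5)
Your overall architecture is the same as the paper's: reduce everything to the $L^n$-rate (\ref{ninfty}), feed in the baseline decay of Lemma \ref{propsharpdecay} and the $L^{q_0}$-decay of Lemma \ref{lemsharp}, run a finite bootstrap on $\sigma_k$ with the three-way splitting of the time integrals, and finish with (\ref{inftygrad2}) and interpolation (your increment $1/2$ per step versus the paper's $\rho/2$ is immaterial). Your one genuine variation---calibrating $q_0=n/(1+\rho_1-2\gamma)$ against the head of $G_3$ via (\ref{dual}) with $F=u_s\otimes v$---is correct: the exponents cancel to give exactly $t^{-\rho_1/2}$, and the lower bound in (\ref{gamma}) does guarantee the admissibility condition $q\geq n/(n-1)$ in (\ref{dual}). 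The paper instead treats that head with (\ref{lqlr}) and $u_s\in L^{n/(1+\rho_1)}$, whose kernel exponent $-(1+\rho_1)/2<-1$ already suffices, and reserves the $L^{q_0}$-information for other terms.

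That reservation is where your proposal has a gap: $G_1$ and $G_4$ cannot be ``absorbed via (\ref{dual}) and the smallness (\ref{apriori})''. Smallness of a prefactor never improves a decay rate that is capped by the kernel exponent, and with only $L^n$-information those caps are too low. For $G_4$ this is fatal from the second induction step on: its integrand is supported in $\tau\in[0,1]$, so for $t\geq 2$ all $t$-decay must come from the semigroup kernel, and (\ref{dual}) with $F=e_1\otimes v\in L^n$ gives only $\|G_4(t)\|_n\leq Cat^{-1/2}\sup_{0<\tau<1}\|v(\tau)\|_n$; the rate $t^{-1/2}$ is strictly weaker than $t^{-\sigma_k}$ as soon as $\sigma_k>1/2$, which always happens because $\rho_1>1$. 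Similarly, for the head $\int_0^{t/2}$ of $G_1$, pairing two $L^n$-norms through (\ref{dual}) (exponent $-1$) caps the rate at $t^{-1}$, which falls short of $t^{-\rho_1/2}$ whenever $\rho_1>2$, a case permitted by (\ref{rho1}) for $n\geq5$. These two terms are precisely where the paper spends the $L^{q_0}$-decay: measuring one factor in $L^{q_0}$ produces the kernel exponent $-n/(2q_0)=-(1+\rho_1-2\gamma)/2<-\rho_1/2$, which is what the induction needs. Since you have Lemma \ref{lemsharp} in hand, the repair is immediate---estimate $G_4$ and the head of $G_1$ with $\|v(\tau)\|_{q_0}$, exactly as you already do for $G_3$---but as written your induction step fails at $k=2$. (A side remark: your claim that the freedom $\gamma\uparrow 1/2$ is ``essential'' for the boundary case $\rho_1=1+\rho_3$ is off; that case is absorbed by the $G_2$-head and $H_2$ bounds of order $t^{-(1+\rho_3)/2}\leq t^{-\rho_1/2}$, with no role for $\gamma$. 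What the choice of $\gamma$ must really respect, besides (\ref{gamma}), is a compatibility with $\rho$ of the form $\gamma>1/2-\rho/2$, needed for convergence of the head integrals involving $\tau^{-1/2}(1+\tau)^{-\gamma-\sigma_{k-1}}$; this is automatic for $\gamma$ near $1/2$ and $\rho$ near $1$, but it should be stated.)
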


\begin{proof}
Fix $1/2<\rho<1$ and $\gamma>0$ such that 
\begin{align}\label{gamma2}
\max\Big\{\frac{1}{2}-\frac{\rho}{2},\,\frac{\rho_1+3-n}{2}\Big\}<\gamma<
\frac{1}{2}.
\end{align}
Let $\varepsilon'(\rho,n,D)$ and $\varepsilon''(\gamma,n,D)$ be 
the constants in 
Lemma \ref{propsharpdecay} and Lemma \ref{lemsharp}, respectively. 
We show by induction that if 
\begin{align*}
(M+1)a^{\frac{n-2}{n+1}}<
\min\{\varepsilon'(\rho,n,D),\varepsilon''(\gamma,n,D)\}
=:\varepsilon_*(n,D),
\end{align*}
then $v(t)$ satisfies 
\begin{align}\label{sigmak}
\|v(t)\|_n=O(t^{-\sigma_k}),\qquad \sigma_k:=\min\Big\{
\frac{k}{2}\rho,\,\frac{\rho_1}{2}\Big\}
\end{align}
as $t\rightarrow\infty$ for all $k\geq 1$. 
This implies (\ref{sharpdecayn}) with $q=n$, 
which together with (\ref{inftygrad2}) completes the proof. 
Since $\rho<\rho_1$, (\ref{sigmak}) with $k=1$ follows from 
Lemma \ref{propsharpdecay}. 
We note that $\sigma_1<1/2$ and $\sigma_k>1/2$ for $k\geq 2$.
\par Let $k\geq 2$ and suppose (\ref{sigmak}) with $k-1$. Then 
\begin{align*}
L_{k-1}(v):=\sup_{\tau>0}(1+\tau)^{\sigma_{k-1}}\|v(\tau)\|_n+
\sup_{\tau>0}\tau^{\frac{1}{2}}(1+\tau)^{\sigma_{k-1}}
\big(\|v(\tau)\|_\infty+\|\nabla v(\tau)\|_n\big)
<\infty
\end{align*}
holds due to (\ref{apriori}) (near $t=0$) as well as (\ref{inftygrad2}). 
In what follows, we always assume $t\geq 2$. 
From (\ref{vq}), it follows that 
\begin{align}
\|G_1(t)\|_n&\leq \int_0^{\frac{t}{2}}(t-\tau)^{-\frac{n}{2q_0}}
\|v(\tau)\|_{q_0}\|\nabla v(\tau)\|_n\,d\tau
+\int_{\frac{t}{2}}^t(t-\tau)^{-\frac{1}{2}}
\|v(\tau)\|_n\|\nabla v(\tau)\|_n\,d\tau=:I+II\label{g1n}
\end{align}
with 
\begin{align}
I\leq Ct^{-\frac{n}{2q_0}}
\big(\sup_{\tau>0}(1+\tau)^\gamma\|v(\tau)\|_{q_0}\big)L_{k-1}(v)
\leq Ct^{-\frac{\rho_1}{2}}
\big(\sup_{\tau>0}(1+\tau)^\gamma\|v(\tau)\|_{q_0}\big)
L_{k-1}(v),\label{I}
\end{align}
where (\ref{q0}) and (\ref{gamma2}) are taken into account and 
\begin{align}
II\leq Ct^{-2\sigma_{k-1}}L_{k-1}(v)^2.\label{II2}
\end{align}
For $G_2(t)$, we split the integral into 
\begin{align*} 
\int_0^t\|e^{-(t-\tau)A_a}
P[\psi(\tau)v\cdot\nabla u_s]\|_n\,d\tau
=\int_{0}^{\frac{t}{2}}+\int_{\frac{t}{2}}^{t-1}+
\int_{t-1}^{t}.
\end{align*}
Then we find 
\begin{align*}
\int_0^{\frac{t}{2}}&\leq 
C\int_0^{\frac{t}{2}}(t-\tau)^{-\frac{1+\rho_3}{2}}\tau^{-\frac{1}{2}}
(1+\tau)^{-\sigma_{k-1}}\,d\tau\,\|\nabla u_s\|_{\frac{n}{2+\rho_3}}
\big(\sup_{\tau>0}\tau^{\frac{1}{2}}
(1+\tau)^{\sigma_{k-1}}\|v(\tau)\|_\infty\big)\nonumber\\
&\leq 
\begin{cases}
Ct^{-\frac{\rho_3}{2}-\sigma_{k-1}}\|\nabla u_s\|_{\frac{n}{2+\rho_3}}
L_{k-1}(v)
\leq Ct^{-\sigma_{k}}\|\nabla u_s\|_{\frac{n}{2+\rho_3}}
L_{k-1}(v)
\quad \mbox{if}~k=2,\\
Ct^{-\frac{1+\rho_3}{2}}\|\nabla u_s\|_{\frac{n}{2+\rho_3}}
L_{k-1}(v)\leq 
Ct^{-\frac{\rho_1}{2}}\|\nabla u_s\|_{\frac{n}{2+\rho_3}}
L_{k-1}(v)
\quad \mbox{if}~k\geq 3
\end{cases}
\end{align*}
and 
\begin{align*}
\int_{\frac{t}{2}}^{t-1}+\int_{t-1}^{t}
\leq C
t^{-\sigma_{k-1}-\frac{1}{2}}\big(\|\nabla u_s\|_{\frac{n}{2+\rho_3}}
+\|\nabla u_s\|_{\frac{n}{2}}\big)L_{k-1}(v),
\end{align*}
where we have used $\rho_3>1$ and $\rho_1\leq 1+\rho_3.$
Estimates above imply that 
\begin{align}\label{g2n}
\|G_2(t)\|_n\leq Ct^{-\sigma_k}
\big(\|\nabla u_s\|_{\frac{n}{2+\rho_3}}
+\|\nabla u_s\|_{\frac{n}{2}}\big)L_{k-1}(v)
\end{align}
Similarly, we observe
\begin{align}
\|G_3(t)\|_n\leq Ct^{-\sigma_k}
\big(\|u_s\|_{\frac{n}{1+\rho_1}}
+\|u_s\|_{n}\big)L_{k-1}(v).
\end{align}
Moreover, by the same manner as 
in the proof of Lemma \ref{propsharpdecay}, we obtain 
\begin{align}
\|G_4(t)\|_n&\leq
Ct^{-\frac{n}{2q_0}}\sup_{\tau>0}(1+\tau)^\gamma\|v(\tau)\|_{q_0}
\leq Ct^{-\frac{\rho_1}{2}}\sup_{\tau>0}(1+\tau)^\gamma\|v(\tau)\|_{q_0},
\label{g4n}\\
\|H_1(t)\|_n&\leq CMt^{-\frac{\rho_1}{2}}\|u_s\|_{\frac{n}{1+\rho_1}},\\
\|H_2(t)\|_n&\leq Ct^{-\frac{2+\kappa}{2}}
\|u_s\|_{\frac{n}{1+\kappa}}\|\nabla u_s\|_{\frac{n}{2}}
+Ct^{-\frac{1+\rho_3}{2}}
\|\nabla u_s\|_{\frac{n}{2+\rho_3}}\nonumber\\
&\leq Ct^{-\frac{\rho_1}{2}}
\big(\|u_s\|_{\frac{n}{1+\kappa}}\|\nabla u_s\|_{\frac{n}{2}}
+\|\nabla u_s\|_{\frac{n}{2+\rho_3}}\big)\label{h2n}
\end{align}
for all $t\geq 2$, where $\kappa$ is chosen such that 
$\max\{0,\rho_1-2\}<\kappa<\min\{n-3,\rho_1\}$. 
Collecting (\ref{g1n})--(\ref{h2n}),
we conclude (\ref{sigmak}) with $k$.  
The proof is complete.
\end{proof}

We next consider the uniqueness. 
We begin with the classical result on the uniqueness of solutions within 
$Y_0$ as in Fujita and Kato \cite{fuka1964}. 
\begin{lem}\label{yr0uniqueness}
Let $\psi$ be a function on $\R$ satisfying $(\ref{psidef})$ and let  
$\delta$ be the constant in 
Theorem \ref{rmkrho} with $(\ref{staclass2})$--\,$(\ref{rho2}).$
Then there exists a constant 
$\tilde{\varepsilon}=\tilde{\varepsilon}(n,D)\in(0,\delta]$ 
such that if \,$0<a^{(n-2)/(n+1)}<\tilde{\varepsilon}$, 
$(\ref{NS4})$ admits at most one solution within $Y_{0}$. 
\end{lem}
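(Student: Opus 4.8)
The plan is to run the classical Fujita--Kato uniqueness argument, exploiting the vanishing condition $\lim_{t\to0}t^{1/2}\big(\|v(t)\|_\infty+\|\nabla v(t)\|_n\big)=0$ built into $Y_0$, see (\ref{yr0}). Let $v_1,v_2\in Y_0$ be two solutions of (\ref{NS4}) and set $w=v_1-v_2$. Subtracting the two copies of (\ref{NS4}), the forcing terms $h_1,h_2$ cancel and the bilinearity of $G_1$ yields, in the notation of Lemma \ref{key},
\[
w=-G_1(v_1,w)-G_1(w,v_2)-G_2(w)-G_3(w)+G_4(w),
\]
in which every term is \emph{linear} in $w$. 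Note that $M$ has dropped out, consistent with the hypothesis $0<a^{(n-2)/(n+1)}<\tilde\varepsilon$ in the statement.

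Next I would feed this into the estimates (\ref{g1est})--(\ref{g4est}) of Lemma \ref{key}. Writing $[\nabla w]_{n,t}\le\|w\|_{Y,t}$ and $[w]_{n,t}^{1/2}[w]_{\infty,t}^{1/2}\le\|w\|_{Y,t}$, one obtains for every $t$
\[
\|w\|_{Y,t}\le C\big([v_1]_{n,t}^{1/2}[v_1]_{\infty,t}^{1/2}+[\nabla v_2]_{n,t}\big)\|w\|_{Y,t}+C\big(\|u_s\|_{\frac{n}{1+\rho_1}}+\|u_s\|_{\frac{n}{1-\rho_2}}+\|\nabla u_s\|_{\frac{n}{2-\rho_4}}+a\big)\|w\|_{Y,t}.
\]
The second coefficient is \emph{globally} small by (\ref{usest}) of Theorem \ref{rmkrho} and the smallness of $a$, so I would fix $\tilde\varepsilon\in(0,\delta]$ (as in (\ref{assump})) to make it $\le\frac14$. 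Since $v_1,v_2\in Y_0$, the seminorms $[v_1]_{\infty,t}$ and $[\nabla v_2]_{n,t}$ tend to $0$ as $t\to0$ while $[v_1]_{n,t}$ stays bounded; hence there is $T_0>0$ on which the first coefficient is $\le\frac14$ as well. Then $\|w\|_{Y,T_0}\le\frac12\|w\|_{Y,T_0}$, forcing $w\equiv0$ on $[0,T_0]$.

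To reach all of $[0,\infty)$, I would set $T^*=\sup\{T:\,w\equiv0\text{ on }[0,T]\}\ge T_0$, attained by continuity, and suppose $T^*<\infty$. Since the integrals over $[0,T^*]$ vanish, $w(t)=\int_{T^*}^t e^{-(t-\tau)A_a}P[\cdots]\,d\tau$ for $t>T^*$, and I would rerun exactly the estimate (\ref{Tt}) and the lines following it in the proof of Lemma \ref{propsharpdecay}, with the origin shifted to $T^*$. This produces a prefactor $(t-T^*)^{-1/2}$ together with the shifted seminorms $\sup_{T^*<\tau<t}(\tau-T^*)^{1/2}\big(\|v_i(\tau)\|_\infty+\|\nabla v_i(\tau)\|_n\big)$. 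Because $v_i\in Y_0$ are bounded in the $L^\infty$ and $\dot W^{1,n}$ norms near the \emph{interior} point $T^*$, those shifted seminorms vanish as $t\downarrow T^*$, so the same contraction gives $w\equiv0$ on $[T^*,T^*+\eta]$, contradicting maximality; hence $T^*=\infty$ and $v_1=v_2$.

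The one genuinely non-routine point is this last propagation: the bilinear coefficients $[v_1]_{n,t}$, $[v_1]_{\infty,t}$, $[\nabla v_2]_{n,t}$ are not small for large $t$, so the contraction cannot be closed on all of $(0,\infty)$ at once. The remedy is to couple the local-in-time contraction with the connectedness argument above, using crucially that $Y_0$-solutions are bounded away from $t=0$ so the estimate can be rebooted at the interior time $T^*$; the rest is a direct application of Lemma \ref{key} together with the smallness furnished by Theorem \ref{rmkrho}.
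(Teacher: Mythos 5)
Your proof is correct and follows essentially the same two-stage argument as the paper: local uniqueness near $t=0$ via the estimates of Lemma \ref{key} combined with the vanishing condition built into $Y_0$ and the smallness of $u_s$ from (\ref{usest}), followed by propagation through restarting the contraction at a positive time where both solutions are bounded in $L^\infty$ and $\dot W^{1,n}$. The only cosmetic difference is that you close the continuation with a maximal-time/contradiction argument, whereas the paper iterates with a uniform step $\eta$ (available because the constant $C_*$ in (\ref{c*}) only improves as $t_0$ increases); the two bookkeeping devices are interchangeable here.
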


\begin{proof}
The following argument is based on \cite{fuka1964}. 
Suppose that $v,\tilde{v}\in Y_{0}$ are solutions. 
Then we have
\begin{align}\label{v1v2t1}
\|v-\tilde{v}\|_{Y,t}\leq
\big\{C_1\big([\nabla v]_{n,t}+[\tilde{v}]_{n,t}^{\frac{1}{2}}
[\tilde{v}]^{\frac{1}{2}}_{\infty,t}\big)
+C_2a^{\frac{n-2}{n+1}}\big\}\|v-\tilde{v}\|_{Y,t},\quad t>0
\end{align} 
by applying (\ref{usest}) and Lemma \ref{key}. If we assume 
\begin{align}\label{assump2}
a^{\frac{n-2}{n+1}}<\min\Big\{\delta,\frac{1}{2C_2}\Big\}
=:\tilde{\varepsilon}
\end{align}
and choose $t_0>0$ such that 
\begin{align*}
C_1\big\{[\nabla v]_{n,t_0}+
\big(\sup_{0<\tau<\infty}\|\tilde{v}(\tau)\|_n\big)^{\frac{1}{2}}\,
[\tilde{v}]^{\frac{1}{2}}_{\infty,t_0}\big\}<\frac{1}{2},
\end{align*}
then (\ref{v1v2t1}) yields $[v-\tilde{v}]_{Y,t_0}=0$. 
Hence, we conclude $v=\tilde{v}$ on $(0,t_0]$ and obtain
\begin{align*}
v(t)-\tilde{v}(t)=\int_{t_0}^t
e^{-(t-\tau)A_a}P\Big[-(v&-\tilde{v})\cdot\nabla v
-\tilde{v}\cdot\nabla (v-\tilde{v})-\psi(\tau)(v-\tilde{v})\cdot\nabla u_s
\nonumber\\&
-\psi(\tau)u_s\cdot\nabla (v-\tilde{v})
+(1-\psi(\tau))a\frac{\partial}{\partial x_1}(v-\tilde{v})\Big]\,d\tau.
\end{align*} 
By the same argument as in the proof of Lemma \ref{key} 
together with (\ref{usest}), we see that
\begin{align}\label{t0test}
\|v-\tilde{v}\|_{Y,t_0,t}\leq C_*\|v-\tilde{v}\|_{Y,t_0,t}
\end{align}
for all $t>t_0$, where
\begin{align}
\|v\|_{Y,t_0,t}:&=
\sup_{t_0\leq\tau\leq t}\|v(\tau)\|_n
+\sup_{t_0\leq\tau\leq t}\|v(\tau)\|_\infty+
\sup_{t_0\leq \tau\leq t}\|\nabla v(\tau)\|_n,\\
C_*=C\Big[\big(t_0^{-\frac{1}{2}}\|v\|_Y&+t_0^{-\frac{1}{4}}
\|\tilde{v}\|_Y\big)
\big\{(t-t_0)^{\frac{3}{4}}
+(t-t_0)^{\frac{1}{4}}\big\}\nonumber\\&+a^\frac{n-1}{n+1}
\big\{(t-t_0)^{\frac{1}{2}}
+(t-t_0)^{\frac{\rho_2}{2}}+(t-t_0)^{\frac{\rho_4}{2}}\big\}
+a\big\{(t-t_0)+(t-t_0)^{\frac{1}{2}}\big\}
\Big]\label{c*}
\end{align}
and the constant $C$ is independent of $v$, $\tilde{v}$, $t$ and $t_0$. 
We choose $\eta>0$ such that 
\begin{align*}
\xi:=C\Big[\big(t_0^{-\frac{1}{2}}\|v\|_Y+t_0^{-\frac{1}{4}}
\|\tilde{v}\|_Y\big)
\big(\eta^{\frac{3}{4}}
+\eta^{\frac{1}{4}}\big)+a^\frac{n-1}{n+1}
\big(\eta^{\frac{1}{2}}
+\eta^{\frac{\rho_2}{2}}+\eta^{\frac{\rho_4}{2}}
\big)+a\big(\eta+\eta^{\frac{1}{2}}\big)
\Big]<1.
\end{align*}
On account of (\ref{t0test}), we have
$\|v-\tilde{v}\|_{Y,t_0,t_0+\eta}
\leq \xi\|v-\tilde{v}\|_{Y,t_0,t_0+\eta}$, 
which leads us to $v=\tilde{v}$ on $[t_0,t_0+\eta]$. 
By the same calculation, 
we can obtain (\ref{t0test})--(\ref{c*}), in 
which $t_0$ should be replaced by $t_0+\eta$ and hence
\begin{align*}
\|v-\tilde{v}\|_{Y,t_0+\eta,t_0+2\eta}&\leq 
C\Big[\big\{(t_0+\eta)^{-\frac{1}{2}}\|v\|_Y+(t_0+\eta)^{-\frac{1}{4}}
\|\tilde{v}\|_Y\big\}
\big(\eta^{\frac{3}{4}}
+\eta^{\frac{1}{4}}\big)\\
&\hspace{1.8cm}+a^\frac{n-1}{n+1}
\big(\eta^{\frac{1}{2}}
+\eta^{\frac{\rho_2}{2}}+\eta^{\frac{\rho_4}{2}}\big)
+a\big(\eta+\eta^{\frac{1}{2}}\big)
\Big]
\|v-\tilde{v}\|_{Y,t_0+\eta,t_0+2\eta}\\
&<\xi\|v-\tilde{v}\|_{Y,t_0+\eta,t_0+2\eta}
\end{align*} 
holds. This implies $v=\tilde{v}$ 
on $[t_0+\eta,t_0+2\eta].$ 
Repeating this procedure, we conclude $v=\tilde{v}$. 
\end{proof}

\begin{rmk}\label{rmkunique}
It is clear that the equation 
(\ref{bint}) admits at most one solution 
within $Y_{0}$ under the same condition 
as in Lemma \ref{yr0uniqueness}.
\end{rmk}
Let us close the paper with completion 
of the proof of Theorem \ref{attainthm}.\vspace{0.4cm}
\par\noindent{\bf Proof of Theorem \ref{attainthm}}\quad 
Since we know $\varepsilon\leq \tilde{\varepsilon}$ 
from (\ref{assump}) and (\ref{assump2}),
Proposition \ref{yr0existence} and 
Lemma \ref{yr0uniqueness} yield the unique existence of solutions 
in $Y_{0}$ when $(M+1)a^{(n-2)/(n+1)}<\varepsilon$. 
Moreover, Proposition \ref{propsharp3} and 
Proposition \ref{propsharp} give us sharp decay properties of the solution 
provided $a$ is still smaller.  
We finally show 
the uniqueness of the solution constructed above within $Y$
by following the argument due to Brezis \cite{brezis1994}. 
It suffices to show that if $v\in Y$ is a solution, it necessarily 
satisfies 
\begin{align}\label{v0}
\lim_{t\rightarrow 0}\,[v]_t=0,
\end{align}
where
\begin{align*}
[v]_t:=\sup_{0<\tau<t}\tau^{\frac{1}{2}}(\|v(\tau)\|_\infty
+\|\nabla v(\tau)\|_n). 
\end{align*}
We assume 
\begin{align}\label{hatepsilon}
(M+1)a^\frac{n-2}{n+1}<\min\left\{\delta,
\frac{1}{2C_2},\frac{1}{16C_1C_0},\frac{1}{16C_1C_3}\right\}=:
\hat{\varepsilon}(n,D)\,(\leq \varepsilon)
\end{align}
and let $v\in Y$ be a solution. 
Here, the constants $C_i$ are as in Remark \ref{rmkexist} 
as well as in the proof of Proposition \ref{yr0existence}. 
Since $v\in BC([0,\infty);L_\sigma^n(D))$ with $v(0)=0$, 
there exists $s_0>0$ 
such that 
\begin{align*}
\|v(s)\|_n+(M+1)a^\frac{n-2}{n+1}<\hat{\varepsilon}
\end{align*}
for all $0<s\leq s_0$. Hence by Remark \ref{rmkexist}, 
the integral equation (\ref{bint}) with $b=v(s)$ admits a solution 
$T(t)v(s)\in Y_{0}$ along with 
\begin{align}\label{vsest}
\|T(\cdot)v(s)\|_{Y}\leq
4\big(C_0\|v(s)\|_n+C_3(M+1)a^\frac{n-2}{n+1}\big)
<4(C_0+C_3)\hat{\varepsilon}\leq \frac{1}{2C_1}.
\end{align}
On the other hand, given $s\in (0,s_0]$, we can see that  
$z_s(t):=v(t+s)$ for $t\geq 0$ also 
satisfies (\ref{bint}) with $b=v(s)$ and $z_s\in Y_{0}$. 
In view of Remark \ref{rmkunique}, 
we have $z_s(t)=T(t)v(s)$ for $s\in(0,s_0]$, which implies 
\begin{align*}
t^{\frac{1}{2}}\big(\|v(t+s)\|_\infty+\|\nabla v(t+s)\|_n\big)
\leq \displaystyle\sup_{f\in K}[T(\cdot)f]_t,\quad
K=v((0,s_0]):=\{v(t)\in L^n_\sigma(D)\mid t\in(0,s_0]\} 
\end{align*}
for all $s\in (0,s_0]$ and $t>0$. 
Passing to the limit $s\rightarrow 0$, we find
\begin{align}\label{ttt}
[v(\cdot)]_t\leq\displaystyle\sup_{f\in K}[T(\cdot)f]_t.
\end{align}
Furthermore, 
applying Lemma \ref{key} to (\ref{bint}) with $b=f\in v((0,s_0])$ 
as well as Proposition \ref{proplqlr} and (\ref{usest}), 
we have 
\begin{align*}
[T(\cdot)f]_t&\leq C_0[S(\cdot)f]_t
+\Big(C_1\sup_{f\in K}\|T(\cdot)f\|_{Y}
+C_2a^\frac{n-2}{n+1}\Big)[T(\cdot)f]_t
+\|H_1\|_{Y,t}+\|H_2\|_{Y,t},
\end{align*}
where $S(t)f:=e^{-tA_a}f$, and 
deduce from (\ref{hatepsilon}) and (\ref{vsest}) that 
\begin{align}\label{Tf}
[T(\cdot)f]_t\leq 
\frac{C_0[S(\cdot)f]_t+
\|H_1\|_{Y,t}+\|H_2\|_{Y,t}}{1-\Big(C_1\displaystyle
\sup_{f\in K}\|T(\cdot)f\|_{Y}
+C_2a^\frac{n-2}{n+1}\Big)}
\end{align}
for all $f\in K$ and $t>0$. 
Collecting (\ref{Tf}), (\ref{et0k}), (\ref{ttt}) 
and $H_1,H_2\in Y_{0}$ leads to (\ref{v0}). 
The proof is complete. \qed
\vspace{0.4cm}\\
{\bf Acknowledgment.}~
The author would like to 
thank Professor Toshiaki Hishida for valuable 
comments and constant encouragement. 
The author is also grateful to the referee for suggesting 
the another way to prove Theorem \ref{rmkrho}, which is mentioned in 
Section \ref{section1}.

\begin{align*}
\hspace{8cm}&\\
&\rm Graduate~School~of~Mathematics\\
&\rm Nagoya ~University\\
&\rm Furo\mbox{-}cho, Chikusa\mbox{-}ku\\
&\rm Nagoya, 464\mbox{-}8602 \\
&\rm Japan\\
&\rm E\mbox{-}mail: m17023c@math.nagoya\mbox{-}u.ac.jp
\end{align*}

\end{document}